\pdfoutput=1 
\documentclass[11pt,a4paper,oneside]{amsart} 

\usepackage{amsmath}
\usepackage{amsfonts}
\usepackage{amssymb}
\usepackage{amsthm}
\usepackage{amstext}
\usepackage{braket}
\usepackage{mathtools}
\usepackage{array}
\usepackage{enumitem}
\usepackage{mathrsfs}

% Package generali

\usepackage[english]{babel}
\usepackage[utf8]{inputenc}
\usepackage[sc]{mathpazo} %Font Palatino
\linespread{1.025}
\usepackage{setspace}
\usepackage{enumitem}
\usepackage{microtype}
\usepackage[usenames,dvipsnames,svgnames,table]{xcolor}
\numberwithin{equation}{section}

%\usepackage[a4paper,top=2cm,bottom=2cm,left=1.5cm,right=1.5cm,bindingoffset=0mm]{geometry}

% Package matematici

\usepackage{tikz}
\usepackage{textcomp}

% Ambienti matematici

\theoremstyle{plain}
\newtheorem{thm}{\textbf{Theorem}}[section]
\newtheorem{prop}[thm]{\textbf{Proposition}}
\newtheorem{cor}[thm]{\textbf{Corollary}}	
\newtheorem{lemma}[thm]{\textbf{Lemma}}

\theoremstyle{definition}

\theoremstyle{remark}
\newtheorem{rmk}[thm]{\textbf{Remark}}

\theoremstyle{plain}

\newcommand{\thistheoremname}{}
\newtheorem*{genericthm*}{\thistheoremname}
\newenvironment{namedthm*}[1]
  {\renewcommand{\thistheoremname}{#1}%
   \begin{genericthm*}}
  {\end{genericthm*}}

% Comandi matematici

\newcommand{\df}{\overset{\text{def}}{=}}

\usepackage[alphabetic]{amsrefs}
\usepackage{hyperref}      

\usepackage[a4paper,top=3cm,bottom=3cm,left=0.5cm,right=0.5cm,bindingoffset=0mm]{geometry}
\oddsidemargin -.5cm
\evensidemargin -.5cm
\textwidth 17.0cm

\title{Asymptotic syzygies and higher order embeddings}
\author{Daniele Agostini}
\address{
  Humboldt-Universit\"{a}t zu Berlin\\
  Institut  f\"{u}r Mathematik\\
  Unter den Linden 6, 10099, Berlin Germany}
\email[]{daniele.agostini@math.hu-berlin.de}

%\date{}
\begin{document}

\begin{abstract}
We show that vanishing of asymptotic $p$-th syzygies implies $p$-very ampleness for line bundles on arbitrary projective schemes. For smooth surfaces we prove that the converse holds when $p$ is small, by studying the Bridgeland-King-Reid-Haiman correspondence for tautological bundles on the Hilbert scheme of points. This extends previous results of Ein-Lazarsfeld, Ein-Lazarsfeld-Yang and gives a partial answer to some of their questions. As an application of our results, we show how to use syzygies to bound the irrationality of a variety.  
\end{abstract}

\maketitle

We work over the field of complex numbers. If $X$ is a projective scheme and $L$ a line bundle on $X$, we will write $L\gg 0$ if $L$ has the form $L=P\otimes A^{\otimes d}$, where $P$ is an arbitrary line bundle, $A$ an ample line bundle and $d\gg 0$. 

\section{Introduction}

Let $X$ be a smooth projective variety and $L$ an ample and globally generated line bundle: this gives a map $\phi_L\colon X\to \mathbb{P}(H^0(X,L))$ and we can regard the symmetric algebra $S=\operatorname{Sym}^{\bullet}H^0(X,L)$ as the ring of coordinates of $\mathbb{P}(H^0(X,L))$. For any line bundle $B$ on $X$ we can form a finitely generated graded $S$-module
\begin{equation}\label{modulesections} 
\Gamma_{X}(B,L) :\df \bigoplus_{q\in \mathbb{Z}} H^0(X,B\otimes L^{\otimes q}) 
\end{equation}
and then take its \textit{minimal free resolution}. It is a canonical exact complex of graded $S$-modules
\begin{equation}\label{resolution}
0 \to F_s \to F_{s-1} \to \dots \to F_1 \to F_0 \to \Gamma_X(B,L) \to 0 
\end{equation}
where the $F_i$ are free graded $S$-modules of finite rank. Taking into account the various degrees, we have a decomposition
\begin{equation}\label{koszul} 
F_p = \bigoplus_{q\in \mathbb{Z}}K_{p,q}(X,B,L)\otimes_{\mathbb{C}} S(-p-q)  
\end{equation}
for some vector spaces $K_{p,q}(X,B,L)$, called \textit{syzygy groups} or  \textit{Koszul cohomology groups}.
The Koszul cohomology groups carry a great amount of algebraic and geometric information, and they have been widely studied \cite{green, eisenbud,aprodu_nagel,ein_lazarsfeld_survey}.
\vspace{8pt}

A famous open problem in this field was the \textit{Gonality Conjecture} of Green and Lazarsfeld \cite{green_lazarsfeld}. It asserts that one can read the gonality of a smooth curve $C$ off the syzygies $K_{h^0(C,L)-2-p,1}(C,\mathcal{O}_C,L)$, for $L\gg 0$. This conjecture was confirmed for curves on Hirzebruch surfaces \cite{AproduVanishing2002} and on certain toric surfaces \cite{KawaguchiGonalityToric2008}. Most importantly, it was proven for general curves by Aprodu and Voisin \cite{AproduVoisinGonalityLarge2003} and Aprodu \cite{AproduGonalityOdd2004}. However, the conjecture for an arbitrary curve was left open, until Ein and Lazarsfeld recently gave a surprisingly quick proof \cite{ein_lazarsfeld}, drawing on Voisin's interpretation of Koszul cohomology through the Hilbert scheme \cite{voisin_even}.

More precisely,  Ein and Lazarsfeld's result is a complete characterization of the vanishing of the asymptotic $K_{p,1}(C,B,L)$ in terms of $p$\textit{-very ampleness}. If $B$ is a line bundle on a smooth projective curve $C$, we say that $B$ is \textit{$p$-very ample} if for every effective divisor $\xi\subseteq C$ of degree $p+1$, the evaluation map
\begin{equation}\label{evaluationmapintroduction}
\operatorname{ev}_{\xi}\colon H^0(C,B)\to H^0(C,B\otimes \mathcal{O}_{\xi})
\end{equation}
is surjective. Ein and Lazarsfeld proved the following \cite[Theorem B]{ein_lazarsfeld}:
\begin{equation}\label{einlazarsfeld}
K_{p,1}(C,B,L)=0 \quad \text{ for  } L\gg 0 \qquad \text{ if and only if } \qquad B \text{ is } p\text{-very ample}.
\end{equation}
In particular, this implies the Gonality Conjecture: indeed, the group $K_{h^0(C,L)-p-2,1}(C,\mathcal{O}_C,L)$ is dual to  $K_{p,1}(C,\omega_C,L)$ and Riemann-Roch shows that a curve $C$ has gonality at least $p+2$ if and only if $\omega_C$ is $p$-very ample.
\vspace{8pt}

It is then natural to wonder about an extension of (\ref{einlazarsfeld}) in higher dimensions and this was explicitly asked by Ein and Lazarsfeld in \cite[Problem 4.12]{ein_lazarsfeld_survey} and by Ein, Lazarsfeld and Yang in \cite[Remark 2.2]{ein_lazarsfeld_yang}. However, it is not a priori obvious how to generalize the statement, because the concept of $p$-very ampleness on curves can be extended to higher dimensions in at least three different ways, introduced by Beltrametti, Francia and Sommese in \cite{beltrametti_francia_sommese}.

The first one is by taking essentially the same definition: a line bundle $B$ on a projective scheme $X$ is \textit{$p$-very ample} if for every finite subscheme $\xi\subseteq X$ of length $p+1$, the evaluation map
\begin{equation}\label{evmapX}
\operatorname{ev}_{\xi}\colon H^0(X,B) \to H^0(X,B\otimes \mathcal{O}_\xi)
\end{equation}
is surjective. If instead we require that the evaluation map $\operatorname{ev}_{\xi}$ is surjective only for curvilinear schemes, the line bundle $B$ is said to be \textit{$p$-spanned}. Recall that a finite subscheme $\xi\subseteq X$ is \textit{curvilinear} if it is locally contained in a smooth curve, or, more precisely, if $\dim T_P \xi \leq 1$ for all $P\in \xi$.
The third extension is the stronger concept of jet very ampleness: a line bundle $B$ on a projective scheme $X$ is called \textit{$p$-jet very ample} if for every zero cycle $\zeta=a_1x_1+\dots+a_rx_{r}$ of degree $p+1$ the evaluation map
\begin{equation}\label{jetcondition}
\operatorname{ev}_{\zeta}\colon H^0(X,B) \to H^0(X,B\otimes \mathcal{O}_X/\mathfrak{m}_{\zeta}), \qquad \mathfrak{m}_{\zeta} :\df \mathfrak{m}_{x_1}^{a_1}\dots \mathfrak{m}_{x_r}^{a_r}
\end{equation}
is surjective.

It is straightforward to show that $p$-jet very ampleness implies $p$-very ampleness, which in turn implies $p$-spannedness. Moreover, these three concepts coincide on smooth curves, but this is not true anymore in higher dimensions: for arbitrary varieties, they coincide only when $p=0$ or $1$, and they correspond to the usual notions of global generation and very ampleness. Instead, jet very ampleness is stronger than very ampleness as soon as $p\geq 2$  \cite[Theorem p. 18]{bauer_dirocco_szemberg}.

\vspace{8pt}
The question is how these notions of higher order embeddings relate to the asymptotic vanishing of syzygies. More precisely, we want to know whether one of these notions is the correct one to generalize Ein and Lazarsfeld's result (\ref{einlazarsfeld}) for curves. This was addressed by Ein, Lazarsfeld and Yang in \cite{ein_lazarsfeld_yang}. They prove in \cite[Theorem B]{ein_lazarsfeld_yang} that if $X$ is a smooth projective variety and $K_{p,1}(X,B,L)=0$ for $L\gg 0$,  then the evaluation map $\operatorname{ev}_{\xi}\colon H^0(X,B)\to H^0(X,B\otimes \mathcal{O}_{\xi})$ is surjective for all finite subschemes $\xi \subseteq X$ consisting of $p+1$ distinct points.
For the converse, they prove in \cite[Theorem A]{ein_lazarsfeld_yang}, that if $B$ is $p$-jet very ample, then $K_{p,1}(X,B,L)=0$ for $L\gg 0$. In particular, it follows that there is a perfect analog of (\ref{einlazarsfeld}) in higher dimensions and $p=0,1$. However, it is not clear from this whether the statement should generalize to higher $p$, since in the range $p=0,1$ spannedness, very ampleness and jet very ampleness coincide.

\vspace{8pt}
Our first main  theorem is that one implication of (\ref{einlazarsfeld}) for curves  generalizes in any dimension with $p$-very ampleness, even for singular varieties. Indeed, the result holds for an arbitrary projective scheme. In particular, this strengthens \cite[Theorem B]{ein_lazarsfeld_yang}. Moreover, we can also give an effective result in the case of $p$-spanned line bundles. 

\begin{namedthm*}{Theorem A}\label{thmA}
Let $X$ be a  projective scheme and $B$ a line bundle on $X$,
\begin{equation*}\label{eqthmA}
\text{if} \quad  K_{p,1}(X,B,L) =0 \quad \text{ for } L\gg 0 \qquad \text{ then } \qquad B \text{ is } p \text{-very ample}.
\end{equation*}
 Moreover, suppose that $X$ is smooth and irreducible of dimension $n$ and let $L$ be a line bundle of the form
 \begin{equation*}
L = \omega_{X}\otimes A^{\otimes d} \otimes P ^{\otimes (n-1)} \otimes N, \qquad d\geq (n-1)(p+1)+p+3,
\end{equation*}
where $A$ is a very ample line bundle, $P$ a globally generated line bundle such that $P\otimes B^{\vee}$ is nef and $N$ a nef line bundle such that $N\otimes B$ is nef. For such a line bundle, it holds that
\begin{equation*}
\text{if} \quad  K_{p,1}(X,B,L) =0 \quad  \qquad \text{ then } \qquad B \text{ is } p \text{-spanned}.
\end{equation*}
 \end{namedthm*}
 
Our second main theorem is that on smooth surfaces we have a perfect analog of the situation (\ref{einlazarsfeld}) for curves, at least when $p$ is small.
In particular, this extends the results of \cite{ein_lazarsfeld,ein_lazarsfeld_yang}.
\begin{namedthm*}{Theorem B}\label{thmB}
Let $X$ be a smooth and irreducible projective surface, $B$ a line bundle  and $0\leq p\leq 3$ an integer: 
\begin{equation*}\label{eqhmB}
K_{p,1}(X,B,L)=0 \quad \text{ for } L\gg 0 \qquad \text{ if and only if } \qquad B \text{ is } p\text{-very ample}.
\end{equation*}
\end{namedthm*}
%The new cases here are $p=2$ and $p=3$, since the others were considered in \cite{ein_lazarsfeld_yang}.
\vspace{8pt}

As an application of these results, we generalize part of  the Gonality Conjecture to higher dimensions. More precisely, we show how to use syzygies to bound some measures of irrationality discussed recently by Bastianelli, De Poi, Ein, Lazarsfeld and Ullery \cite{irrationality}. If $X$ is an irreducible projective variety, the \textit{covering gonality} of $X$ is the minimal gonality of a curve $C$ passing through a general point of $X$. Instead, the \textit{degree of irrationality} of $X$ is the minimal degree of a dominant rational map $f\colon X \dasharrow \mathbb{P}^{\,\dim X}$. Our result is the following.

\begin{namedthm*}{Corollary C}
Let $X$ be a smooth and irreducible projective variety of dimension $n$ and suppose that $K_{h^0(X,L)-1-n-p,n}(X,\mathcal{O}_X,L)$ vanishes for $L\gg 0$. Then the covering gonality and the degree of irrationality of $X$ are at least $p+2$. 
\end{namedthm*}
In addition, we show in Corollary \ref{corgonality} that  it is enough to check the syzygy vanishing of Corollary C for a single line bundle $L$ in the explicit form of Theorem A.  Since syzygies are explicitly computable, this gives in principle an effective way to bound the  irrationality of a variety, using for example a computer algebra program.

\vspace{8pt}
Let us now describe our strategy.  We prove the first part of Theorem A by essentially reducing to the case of points in projective space. The same argument, coupled with a vanishing result of Ein and Lazarsfeld for Koszul cohomology \cite[Theorem 2]{ein_lazarsfeld_effective_vanishing}, gives also the effective result about spanned line bundles. 

For Theorem B we follow the strategy of Ein and Lazarsfeld for curves, working on the Hilbert scheme of points. The additional difficulty for a surface $X$ is that the Hilbert scheme of points $X^{[n]}$ does not coincide with the symmetric product $X^{(n)}$. We proceed to study more closely the Hilbert-Chow morphism $\mu\colon X^{[n]}\to X^{(n)}$ and we get in Proposition \ref{criterion} a characterization of the asymptotic vanishing of $K_{p,1}(X,B,L)$, purely in terms of $B$.  We show in Proposition \ref{condition} that a $p$-very ample line bundle $B$ satisfies this criterion, assuming some cohomological vanishings about the Hilbert-Chow morphism.

The key step is to prove these vanishings: we interpret them in the light of the Bridgeland-King-Reid correspondence for $X^{[n]}$, introduced by Haiman \cite{haiman_2} and further developed by Scala \cite{scala} and Krug \cite{krug,krug_mckay}. We remark that Yang has already used this correspondence to study Koszul cohomology in \cite{yang}.
With these tools, we are able to verify the desired vanishing statements for $p$ at most $3$, proving Theorem B. It may well be possible that these conditions also hold for higher $p$, but they become increasingly harder to check. We include some comments about this at the end of the article.  

Corollary C follows from Theorem A, together with  an observation about duality for Koszul cohomology and results of Bastianelli et al. \cite{irrationality}.

\vspace{8pt}
\textbf{Acknowledgments:} I am very grateful to Victor Lozovanu and Alex K\"uronya for various conversations about syzygies and surfaces. In particular, I am especially grateful to Victor Lozovanu for a discussion about the proof of Proposition \ref{condition}. I would like to warmly thank Andreas Krug for discussions on the Hilbert scheme of points, in particular about Lemma \ref{vanishings}, for his many useful comments and for pointing out a mistake in Lemma 5.1 of the first version of this paper. I am thankful to Edoardo Ballico, Mauro Beltrametti, Michael Kemeny, Yeongrak Kim, Robert Lazarsfeld, Tomasz Szemberg, Fabio Tonini and Ruijie Yang for their helpful observations and suggestions. This work is part of my PhD studies and I am indebted to my advisor Gavril Farkas for his invaluable advice and support. I would like to thank the Department of Mathematics of Stony Brook University,  for the excellent conditions provided while visiting there.
 I am supported by the DAAD, the DFG Graduiertenkolleg 1800, the DFG Schwerpunkt 1489 and the Berlin Mathematical School.

\section{Background on minimal free resolutions and Koszul cohomology}

We collect here some results on Koszul cohomology and minimal free resolutions. Good references about this are \cite{green, eisenbud, aprodu_nagel}.

The general setting is as follows: let $V$ be a vector space of finite dimension $\dim V = r+1$ and let $S=\operatorname{Sym}^{\bullet}V$ be the symmetric algebra over $V$, with its natural grading. Let $M$ be a finitely generated, graded $S$-module: then Hilbert's Syzygy Theorem asserts that there exists a unique \textit{minimal free resolution}. It is an exact complex
\begin{equation}
0 \longrightarrow F_{r+1} \longrightarrow  \dots \longrightarrow F_1 \longrightarrow F_0 \longrightarrow M \longrightarrow 0 
\end{equation}
where the $F_p$ are graded, free $S$-modules of the minimal possible rank. We can write
\begin{equation}
 F_p = \bigoplus_{q\in\mathbb{Z}} S(-p-q)\otimes_{\mathbb{C}} K_{p,q}(M;V)
\end{equation}
for certain vector spaces $K_{p,q}(M;V)$ called the \textit{Koszul cohomology groups} or \textit{syzygy groups} of $M$ w.r.t. $V$. A fundamental result about the group $K_{p,q}(M;V)$ is that it can be computed as the middle cohomology of the \textit{Koszul complex}:
\begin{equation}\label{koszulcomplex}
\wedge^{p+1}V \otimes M_{q-1} \overset{d_{p+1,q-1}}{\longrightarrow} \wedge^pV \otimes M_q \overset{d_{p,q}}{\longrightarrow} \wedge^{p-1}V\otimes M_{q+1} 
\end{equation}
where the differentials are given by
\begin{small}
\begin{equation}
d_{p,q}\colon \wedge^pV \otimes M_q \to \wedge^{p-1}V\otimes M_{q+1}, \qquad v_1\wedge \dots \wedge v_p \otimes m \mapsto \sum_{i=1}^p(-1)^{p+1}v_1\wedge \dots \wedge \widehat{v_i} \wedge \dots \wedge v_p \otimes v_i\cdot m
\end{equation}
\end{small}
We will need later the following elementary fact. We include a proof for completeness.
\begin{lemma}\label{stupidnonvanishing}
  Let $V$ be a vector space of dimension $\dim V=r+1$ and let $N=\bigoplus_{q\geq 0}N_q$ be a graded, finitely generated $\operatorname{Sym}^{\bullet}V$-module such that 
  \begin{equation}
   (0:_{N_0} V) :\df  \{ y \in N_0 \,|\, v\cdot y=0 \text{ for all } v\in V \} = 0.
 \end{equation}
 Then for any submodule $M\subset N$ such that $M_0 \subsetneq N_0$ and $M_1=N_1$, we have $K_{r,1}(M;V)\ne 0$.  
\end{lemma}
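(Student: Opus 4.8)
\emph{Proof proposal.} The plan is to compute $K_{r,1}(M;V)$ directly from its description as the middle cohomology of the Koszul complex
\[
\wedge^{r+1}V\otimes M_0 \overset{d_{r+1,0}}{\longrightarrow} \wedge^r V\otimes M_1 \overset{d_{r,1}}{\longrightarrow} \wedge^{r-1}V\otimes M_2,
\]
and to exhibit a nonzero class by hand. Since $\dim V=r+1$, the space $\wedge^{r+1}V$ is one-dimensional; fixing a basis $e_1,\dots,e_{r+1}$ of $V$, it is spanned by $\omega=e_1\wedge\cdots\wedge e_{r+1}$, and I identify $\wedge^{r+1}V\otimes M_0$ with $M_0$. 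The main idea is to manufacture a cycle for $M$ out of an element that does \emph{not} belong to $M$: because $M_1=N_1$, the very same Koszul formula defines compatible differentials for $N$, which I denote $d^N_{\bullet,\bullet}$, and these restrict to those of $M$ on the graded pieces where $M$ and $N$ agree (using $M_1=N_1$ and $M_2\subseteq N_2$).

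Concretely, since $M_0\subsetneq N_0$ I may pick $y\in N_0\setminus M_0$ and set
\[
z := d^N_{r+1,0}(\omega\otimes y)=\sum_{i=1}^{r+1}\pm\, e_1\wedge\cdots\wedge\widehat{e_i}\wedge\cdots\wedge e_{r+1}\otimes e_i\cdot y,
\]
where the precise signs are immaterial for what follows. Each product $e_i\cdot y$ lies in $N_1=M_1$, so in fact $z\in\wedge^r V\otimes M_1$, and $d_{r,1}(z)=d^N_{r,1}\,d^N_{r+1,0}(\omega\otimes y)=0$ by $d^2=0$; thus $z$ is a genuine Koszul cycle for $M$.

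It remains to check that $z$ is not a boundary, and this is exactly where the hypothesis $(0:_{N_0}V)=0$ enters. The key computation is that the map $N_0\to\wedge^r V\otimes N_1$, $w\mapsto d^N_{r+1,0}(\omega\otimes w)$, is injective: the wedges $e_1\wedge\cdots\wedge\widehat{e_i}\wedge\cdots\wedge e_{r+1}$ for $1\le i\le r+1$ form a basis of $\wedge^r V$, so $d^N_{r+1,0}(\omega\otimes w)=0$ forces $e_i\cdot w=0$ for every $i$, i.e. $w\in(0:_{N_0}V)=0$. Granting this, suppose $z=d_{r+1,0}(\omega\otimes m)$ for some $m\in M_0$; then $d^N_{r+1,0}(\omega\otimes(y-m))=0$, so injectivity gives $y=m\in M_0$, contradicting $y\notin M_0$. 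Hence $z$ represents a nonzero class and $K_{r,1}(M;V)\ne 0$. The only point requiring care is the bookkeeping that the Koszul differentials of $M$ and $N$ agree on the relevant graded pieces; the rest of the argument is the one-line injectivity statement, whose entire content is the assumption $(0:_{N_0}V)=0$.
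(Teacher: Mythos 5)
Your proof is correct, and it takes a more hands-on route than the paper's. The paper applies the long exact sequence in Koszul cohomology (citing Green) to the short exact sequence $0\to M\to N\to N/M\to 0$: there one shows $K_{r+1,0}(N/M;V)\cong\wedge^{r+1}V\otimes(N/M)_0\ne 0$ (this uses $M_1=N_1$, which forces $(N/M)_1=0$) and $K_{r+1,0}(N;V)=0$ (the annihilator hypothesis), and concludes by exactness that $K_{r+1,0}(N/M;V)$ injects into $K_{r,1}(M;V)$. Your argument is precisely the explicit unwinding of that connecting homomorphism: your cycle $z=d^N_{r+1,0}(\omega\otimes y)$ is the image under the connecting map of the class $\omega\otimes\bar y\in K_{r+1,0}(N/M;V)$, and your injectivity statement for $w\mapsto d^N_{r+1,0}(\omega\otimes w)$ on $N_0$ is literally the same computation the paper uses to prove $K_{r+1,0}(N;V)=0$. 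So the mathematical content coincides, but the packaging differs: the paper's version is shorter once one grants the long exact sequence for Koszul cohomology of a short exact sequence of modules, while yours is fully self-contained, needs no citation, and has the small additional benefit of exhibiting an explicit nonzero class in $K_{r,1}(M;V)$ rather than only its existence. Your bookkeeping points are all sound: the differentials of $M$ and $N$ agree on $\wedge^{r}V\otimes M_1$ because $M$ is a submodule and $M_1=N_1$, the cycle condition follows from $d^2=0$ computed in $N$ together with the injection $\wedge^{r-1}V\otimes M_2\hookrightarrow\wedge^{r-1}V\otimes N_2$, and the boundaries for $M$ are exactly the elements $d^N_{r+1,0}(\omega\otimes m)$ with $m\in M_0$ since $\wedge^{r+1}V$ is one-dimensional.
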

\begin{proof}
  We have a short exact sequence of $\operatorname{Sym}^{\bullet}V$-modules
  \begin{equation}
    0 \to M \to N \to N/M \to 0
  \end{equation}
  which induces a long exact sequence in Koszul cohomology (see \cite[Corollary (1.d.4)]{green}):
  \begin{equation}
 \dots \to K_{r+1,0}(N;V) \to K_{r+1,0}(N/M;V) \to K_{r,1}(M;V) \to \dots  
\end{equation}
Thanks to our hypotheses on $M$, the Koszul complex (\ref{koszulcomplex}) shows immediately that $K_{r+1,0}(N/M;V) \cong \wedge^{r+1}V \otimes (N/M)_0 \ne 0$. To conclude it suffices to show that $K_{r+1,0}(N;V)= 0$: the Koszul complex (\ref{koszulcomplex}) shows that
\begin{equation}
K_{r+1,0}(N;V) = \operatorname{Ker} \left[ d_{r+1,0}\colon \wedge^{r+1}V\otimes N_0 \to \wedge^r V\otimes N_1 \right].
\end{equation}
Now fix a basis $X_0,\dots,X_r$ of $V$: for every $y\in N_0$ we have 
\begin{equation}
d_{r+1,0}(X_0\wedge\dots \wedge X_{r+1} \otimes y) = \sum_{i=0}^{r+1}(-1)^i X_0\wedge \dots \wedge \widehat{X_i} \wedge \dots \wedge X_r \otimes X_i\cdot y
\end{equation}
hence, $d_{r+1,0}(X_0\wedge\dots \wedge X_{r+1} \otimes y)=0$ if and only if $X_i\cdot y =0$ for all $i$. But by hypothesis this implies $y=0$ and we are done.
\end{proof}

\subsection{Koszul cohomology in geometry}

The Koszul cohomology groups in the Introduction can be seen in the algebraic setting as follows: let $X$ be a projective scheme and $L$ an ample and globally generated line bundle on $X$. Then for every coherent sheaf $\mathcal{F}$ on $X$ without associated closed points, the module of sections $\Gamma_{X}(\mathcal{F},L)$ is a finitely generated and graded $\operatorname{Sym}^{\bullet}H^0(X,L)$-module. Hence, we set:
\begin{equation}
K_{p,q}(X,\mathcal{F},L) :\df K_{p,q}(\Gamma_X(\mathcal{F},L);H^0(X,L)).
\end{equation}
Moreover, even when the module of sections $\Gamma_{X}(\mathcal{F},L)$ is not finitely generated, we can define the Koszul cohomology group $K_{p,q}(X,\mathcal{F},L)$ as the middle cohomology of the Koszul complex (\ref{koszulcomplex}).

In this geometric situation one can compute Koszul cohomology via kernel bundles: since $L$ is globally generated, we have an exact sequence
\begin{equation}\label{kernelbundles}
 0 \to M_L \to H^0(X,L)\otimes_{\mathbb{C}} \mathcal{O}_X \to L \to 0
\end{equation}
which defines a vector bundle $M_L$. By a well-known result of Lazarsfeld, the above exact sequence can be used to compute Koszul cohomology, see e.g. \cite[Remark 2.6]{aprodu_nagel}:

\begin{prop}[Lazarsfeld]\label{koszulkernel}
  With the above notation, we have:
  \begin{small} 
  \begin{align}
    K_{p,q}(X,\mathcal{F},L) & \cong \operatorname{Coker} \left[ \wedge^{p+1}H^0(X,L)\otimes H^0(X,\mathcal{F}\otimes L^{\otimes (q-1)}) \to H^0(X,\wedge^p M_L \otimes \mathcal{F} \otimes L^{\otimes q}) \right] \\
    & = \operatorname{Ker } \left[ H^1(X,\wedge^{p+1}M_L\otimes L^{\otimes (q-1)} \otimes \mathcal{F}) \to \wedge^{p+1}H^0(X,L)\otimes H^1(X,L^{\otimes (q-1)})\otimes \mathcal{F}) \right].
  \end{align}
  \end{small} 
\end{prop}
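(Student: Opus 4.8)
The plan is to compute the Koszul cohomology directly from the kernel bundle sequence (\ref{kernelbundles}) by passing to exterior powers. Write $V=H^0(X,L)$. Since $L$ is a line bundle, the higher exterior powers $\wedge^{\geq 2}L$ vanish, so the two-step filtration of $\wedge^j\left(V\otimes\mathcal{O}_X\right)$ induced by (\ref{kernelbundles}) yields, for every $j\geq 1$, a short exact sequence of locally free sheaves
\begin{equation*}
0 \to \wedge^j M_L \to \wedge^j V \otimes \mathcal{O}_X \to \wedge^{j-1}M_L\otimes L \to 0.
\end{equation*}
Because the quotient term is locally free, this sequence is locally split, hence remains exact after tensoring by the coherent sheaf $\mathcal{F}\otimes L^{\otimes m}$ for any $m$; this is exactly what allows the argument to run for an arbitrary coherent $\mathcal{F}$.

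First I would twist the sequence with $j=p$ by $\mathcal{F}\otimes L^{\otimes q}$ and take the long exact sequence in cohomology, extracting the left portion
\begin{equation*}
0 \to H^0(\wedge^p M_L\otimes\mathcal{F}\otimes L^{\otimes q}) \xrightarrow{\ \alpha\ } \wedge^p V\otimes H^0(\mathcal{F}\otimes L^{\otimes q}) \xrightarrow{\ \beta\ } H^0(\wedge^{p-1}M_L\otimes\mathcal{F}\otimes L^{\otimes(q+1)}),
\end{equation*}
together with the analogous injection $\alpha'$ for the pair $(p-1,q+1)$, which embeds $H^0(\wedge^{p-1}M_L\otimes\mathcal{F}\otimes L^{\otimes(q+1)})$ into $\wedge^{p-1}V\otimes H^0(\mathcal{F}\otimes L^{\otimes(q+1)})$. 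The crucial point -- and the place where I expect the real work to lie -- is to verify that the Koszul differential $d_{p,q}$ of (\ref{koszulcomplex}) factors, up to sign, as $\alpha'\circ\beta$. This is a direct but bookkeeping-heavy unwinding of how the connecting maps of the exterior-power sequences act on decomposable tensors $v_1\wedge\dots\wedge v_p\otimes m$, matched against the explicit formula for $d_{p,q}$; everything that follows is a formal diagram chase once this compatibility is in hand.

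Granting the identification, the first formula is immediate. As $\alpha'$ is injective we get $\ker d_{p,q}=\ker\beta=\operatorname{image}\alpha$, and injectivity of $\alpha$ identifies $H^0(\wedge^p M_L\otimes\mathcal{F}\otimes L^{\otimes q})$ with $\ker d_{p,q}$. Applying the same factorization one step up shows that $d_{p+1,q-1}=\alpha\circ\beta_{p+1,q-1}$, where $\beta_{p+1,q-1}\colon \wedge^{p+1}V\otimes H^0(\mathcal{F}\otimes L^{\otimes(q-1)}) \to H^0(\wedge^p M_L\otimes\mathcal{F}\otimes L^{\otimes q})$ is the connecting map of the $j=p+1$ sequence twisted by $\mathcal{F}\otimes L^{\otimes(q-1)}$. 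Hence $\operatorname{image}d_{p+1,q-1}=\alpha(\operatorname{image}\beta_{p+1,q-1})$, and since $\alpha$ is injective,
\begin{equation*}
K_{p,q}(X,\mathcal{F},L)=\frac{\ker d_{p,q}}{\operatorname{image}d_{p+1,q-1}} \cong \operatorname{Coker}\left[\wedge^{p+1}V\otimes H^0(\mathcal{F}\otimes L^{\otimes(q-1)}) \xrightarrow{\ \beta_{p+1,q-1}\ } H^0(\wedge^p M_L\otimes\mathcal{F}\otimes L^{\otimes q})\right],
\end{equation*}
which is the first displayed isomorphism.

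For the second formula I would reuse the very same exact sequence -- the one with $j=p+1$ twisted by $\mathcal{F}\otimes L^{\otimes(q-1)}$ -- but now read off its connecting homomorphism $\delta$ into $H^1$. Its long exact sequence contains the stretch $\wedge^{p+1}V\otimes H^0(\mathcal{F}\otimes L^{\otimes(q-1)}) \xrightarrow{\beta_{p+1,q-1}} H^0(\wedge^p M_L\otimes\mathcal{F}\otimes L^{\otimes q}) \xrightarrow{\delta} H^1(\wedge^{p+1}M_L\otimes\mathcal{F}\otimes L^{\otimes(q-1)}) \to \wedge^{p+1}V\otimes H^1(\mathcal{F}\otimes L^{\otimes(q-1)})$. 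Therefore $\delta$ induces an isomorphism of $\operatorname{Coker}\beta_{p+1,q-1}$ -- which we have just identified with $K_{p,q}(X,\mathcal{F},L)$ -- onto the kernel of the rightmost map, giving precisely the second description. The only genuine obstacle in the entire argument is the sign-compatibility check of the second paragraph; the rest is bookkeeping with the long exact sequences above.
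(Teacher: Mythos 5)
Your proposal is correct and is essentially the same kernel-bundle argument behind the result the paper simply cites (Lazarsfeld's description, \cite[Remark 2.6]{aprodu_nagel}): take exterior powers of (\ref{kernelbundles}), use that the quotient is a line bundle so the sequences are locally split and stay exact after tensoring by $\mathcal{F}\otimes L^{\otimes m}$, identify the Koszul differential with the composite of the induced maps on $H^0$, and read off the cokernel and the connecting map into $H^1$; the sign/compatibility check you flag is indeed routine. Note in passing that your formulation also fixes the typo in the paper's second displayed formula, where the target should be $\wedge^{p+1}H^0(X,L)\otimes H^1(X,L^{\otimes(q-1)}\otimes\mathcal{F})$.
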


Now we are going to prove a simple result about Koszul cohomology that we will need in the proof of Corollary C.

\subsection{A remark on duality for Koszul cohomology}

We first show that with some cohomological vanishings we can get a bit more from Proposition \ref{koszulkernel}:

\begin{lemma}\label{koszulhigherMl}
  With the same notation as before, fix $h>0$ and suppose that
  \begin{align}
    H^i(X,\mathcal{F}\otimes L^{\otimes (q-i)}) &= 0 & &  \text{ for all } i=1,\dots,h. \\
    H^i(X,\mathcal{F}\otimes L^{\otimes (q-i-1)}) &=0 & &  \text{ for all }  i=1,\dots,h-1.
  \end{align}
  Then
  \begin{equation}
   K_{p,q}(X,\mathcal{F},L) \cong H^h(X,\wedge^{p+h}M_L \otimes \mathcal{F} \otimes L^{\otimes (q-h)}). 
  \end{equation} 
\end{lemma}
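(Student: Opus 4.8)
The plan is to start from the second description of Koszul cohomology in Proposition \ref{koszulkernel} and then \emph{climb} the cohomological degree one step at a time, trading each unit of cohomological degree for one more exterior power of $M_L$ and one fewer twist by $L$. The engine of the climb is the family of short exact sequences obtained by taking exterior powers of the defining sequence $0 \to M_L \to H^0(X,L)\otimes \mathcal{O}_X \to L \to 0$. Since the quotient $L$ is a line bundle, the exterior-power filtration truncates to two terms and we obtain, for every $k$,
\begin{equation*}
0 \to \wedge^k M_L \to \wedge^k H^0(X,L) \otimes \mathcal{O}_X \to \wedge^{k-1}M_L \otimes L \to 0.
\end{equation*}

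First I would tensor this sequence by $\mathcal{F}\otimes L^{\otimes m}$ and pass to the long exact sequence in cohomology. The key observation is that whenever $H^i(X,\mathcal{F}\otimes L^{\otimes m})$ and $H^{i+1}(X,\mathcal{F}\otimes L^{\otimes m})$ both vanish, the connecting homomorphism yields an isomorphism
\begin{equation*}
H^i(X,\wedge^{k-1}M_L \otimes \mathcal{F}\otimes L^{\otimes(m+1)}) \cong H^{i+1}(X,\wedge^k M_L\otimes \mathcal{F}\otimes L^{\otimes m}),
\end{equation*}
because the two flanking terms $\wedge^k H^0(X,L)\otimes H^i(X,\mathcal{F}\otimes L^{\otimes m})$ and $\wedge^k H^0(X,L)\otimes H^{i+1}(X,\mathcal{F}\otimes L^{\otimes m})$ are then zero.

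Next I would establish the base case. Using the hypothesis $H^1(X,\mathcal{F}\otimes L^{\otimes(q-1)})=0$ (the case $i=1$ of the first displayed assumption), the map appearing in Proposition \ref{koszulkernel} has target zero, so $K_{p,q}(X,\mathcal{F},L)\cong H^1(X,\wedge^{p+1}M_L\otimes \mathcal{F}\otimes L^{\otimes(q-1)})$. Then I would iterate the isomorphism above for $i=1,\dots,h-1$, with the choices $k=p+i+1$ and $m=q-i-1$, so that each step sends $H^i(X,\wedge^{p+i}M_L\otimes \mathcal{F}\otimes L^{\otimes(q-i)})$ to $H^{i+1}(X,\wedge^{p+i+1}M_L\otimes \mathcal{F}\otimes L^{\otimes(q-i-1)})$. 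After $h-1$ steps this terminates at $H^h(X,\wedge^{p+h}M_L\otimes \mathcal{F}\otimes L^{\otimes(q-h)})$, which is the claimed identification.

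The only real work is the bookkeeping that confirms the two vanishing hypotheses supply exactly the cohomology each step consumes. The step $i\to i+1$ requires $H^i(X,\mathcal{F}\otimes L^{\otimes(q-i-1)})=0$ and $H^{i+1}(X,\mathcal{F}\otimes L^{\otimes(q-i-1)})=0$. Ranging over $i=1,\dots,h-1$, the first of these is precisely the second displayed hypothesis, while the second, after reindexing $j=i+1$, reads $H^j(X,\mathcal{F}\otimes L^{\otimes(q-j)})=0$ for $j=2,\dots,h$, which together with the base-case vanishing $j=1$ is exactly the first displayed hypothesis. I expect no serious obstacle beyond this index matching; the one point requiring a little care is the truncation of the exterior-power filtration, which is legitimate precisely because $M_L$ is locally free and the quotient $L$ is a line bundle.
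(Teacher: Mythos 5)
Your proof is correct and is essentially the paper's own argument: the paper proceeds by induction on $h$, with the same base case read off from Proposition \ref{koszulkernel} and, at each step, exactly your truncated exterior-power sequence $0 \to \wedge^{p+h}M_L \to \wedge^{p+h}H^0(X,L)\otimes\mathcal{O}_X \to \wedge^{p+h-1}M_L\otimes L \to 0$ tensored by $\mathcal{F}\otimes L^{\otimes(q-h)}$, consuming the same two vanishing hypotheses to turn the connecting map into an isomorphism. Your ``climbing'' iteration is just that induction unrolled, so the two arguments coincide.
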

\begin{proof}
  We proceed by induction on $h$. If $h=1$ the statement follows immediately from Proposition \ref{koszulkernel}. If instead $h>1$, taking exterior powers in the exact sequence (\ref{kernelbundles}) and tensoring by $\mathcal{F}\otimes L^{(q-h)}$ we get an exact sequence
  \begin{small} 
  \begin{equation}
   0 \to \wedge^{p+h}M_L \otimes L^{\otimes (q-h)} \otimes \mathcal{F} \to \wedge^{p+h}H^0(X,L)\otimes L^{\otimes (q-h)}\otimes \mathcal{F} \to \wedge^{p+h-1}M_L \otimes L^{\otimes (q-h+1)} \otimes \mathcal{F} \to 0.
 \end{equation}
 \end{small} 
 The statement follows from the induction hypothesis by taking the exact sequence in cohomology.
\end{proof}

Using this lemma, we can prove a small variant of  the Duality Theorem for Koszul cohomology. 

\begin{prop}\label{partialduality}
  Let $X$ be a smooth variety of dimension $n$, $L$ an ample and globally generated line bundle and $E$ a vector bundle such that
  \begin{align}
    H^i(X,E\otimes L^{\otimes (q-i-1)}) &= 0 & &\text{ for all } i=1,\dots,n-1. \\
    H^i(X,E\otimes L^{\otimes (q-i)}) &=0 & &\text{ for all } i=2,\dots,n-1.
  \end{align}
  Then
  \begin{equation}\label{partialdualityeq}
   \dim K_{p,q}(X,E,L) \leq \dim K_{h^0(X,L)-1-n-p,n+1-q}(X,\omega_X\otimes E^{\vee},L).
  \end{equation}
\end{prop}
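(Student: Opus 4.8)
The plan is to squeeze $K_{p,q}(X,E,L)$ between a single higher cohomology group of a twisted exterior power of $M_L$ and the Koszul group on the right of \eqref{partialdualityeq}, the bridge being Serre duality; this is in the spirit of Green's Duality Theorem, but the hypotheses are arranged so that one isomorphism degrades to an inclusion, producing the inequality rather than an equality. Write $r = h^0(X,L)-1 = \operatorname{rank}M_L$ and note that taking determinants in \eqref{kernelbundles} gives $\det M_L\cong L^{\vee}$, so the perfect pairing $\wedge^kM_L\otimes\wedge^{r-k}M_L\to \det M_L$ identifies $(\wedge^kM_L)^{\vee}\cong \wedge^{r-k}M_L\otimes L$. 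First I would replay the inductive mechanism of Lemma \ref{koszulhigherMl} for $\mathcal F=E$ up to $h=n-1$, but recording the one hypothesis we do \emph{not} assume. Proposition \ref{koszulkernel} always exhibits $K_{p,q}(X,E,L)$ as a kernel inside $H^1(X,\wedge^{p+1}M_L\otimes E\otimes L^{\otimes(q-1)})$, and this inclusion is an isomorphism exactly when $H^1(X,E\otimes L^{\otimes(q-1)})=0$; since that is the single condition missing from our list, I keep only the inclusion. The ascent $H^1\cong H^2\cong\dots\cong H^{n-1}$ then uses the long exact sequences of the twisted exterior-power sequences from Lemma \ref{koszulhigherMl}: the step $H^{h-1}\cong H^h$ for $h=2,\dots,n-1$ requires $H^{h-1}(X,E\otimes L^{\otimes(q-h)})=0$ and $H^h(X,E\otimes L^{\otimes(q-h)})=0$, which are precisely the cases $i=1,\dots,n-2$ of the first hypothesis and $i=2,\dots,n-1$ of the second. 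Composing yields an injection
\[
K_{p,q}(X,E,L)\hookrightarrow H^{n-1}\bigl(X,\wedge^{p+n-1}M_L\otimes E\otimes L^{\otimes(q-n+1)}\bigr).
\]

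Next I would apply Serre duality on $X$ (smooth and projective, as throughout) to the right-hand group. Using $(\wedge^{p+n-1}M_L)^{\vee}\cong\wedge^{\,r-n-p+1}M_L\otimes L$ together with $r-n-p+1=(h^0(X,L)-1-n-p)+1$, a direct rewriting of the twists gives
\[
\dim H^{n-1}\bigl(X,\wedge^{p+n-1}M_L\otimes E\otimes L^{\otimes(q-n+1)}\bigr)=\dim H^{1}\bigl(X,\wedge^{p'+1}M_L\otimes\omega_X\otimes E^{\vee}\otimes L^{\otimes(q'-1)}\bigr),
\]
where $p'=h^0(X,L)-1-n-p$ and $q'=n+1-q$ are exactly the indices on the right of \eqref{partialdualityeq}.

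It then remains to recognise this last $H^1$ as the target Koszul group. By Proposition \ref{koszulkernel} applied to $\omega_X\otimes E^{\vee}$ in bidegree $(p',q')$, the group $K_{p',q'}(X,\omega_X\otimes E^{\vee},L)$ is the kernel of a map out of $H^1(X,\wedge^{p'+1}M_L\otimes\omega_X\otimes E^{\vee}\otimes L^{\otimes(q'-1)})$ whose target is a sum of copies of $H^1(X,\omega_X\otimes E^{\vee}\otimes L^{\otimes(q'-1)})$. Since $q'-1=n-q$, Serre duality identifies this last space with $H^{n-1}(X,E\otimes L^{\otimes(q-n)})^{\vee}$, and this vanishes by the remaining case $i=n-1$ of the first hypothesis -- the one apparently unused so far. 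Hence the kernel is everything, giving $K_{p',q'}(X,\omega_X\otimes E^{\vee},L)\cong H^1(X,\wedge^{p'+1}M_L\otimes\omega_X\otimes E^{\vee}\otimes L^{\otimes(q'-1)})$, and chaining the injection, the Serre-duality equality of dimensions, and this isomorphism gives \eqref{partialdualityeq}.

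I expect the main obstacle to be purely the bookkeeping of the cohomological hypotheses: one has to check that every isomorphism in the climb from $H^1$ to $H^{n-1}$ is fed by an assumed vanishing, that the one vanishing deliberately omitted ($H^1(X,E\otimes L^{\otimes(q-1)})=0$) is exactly what turns the base-case isomorphism into an inclusion, and that the lone leftover hypothesis is precisely what collapses the kernel on the dual side. The determinant computation $\det M_L\cong L^{\vee}$ and the exterior-power duality $(\wedge^kM_L)^{\vee}\cong\wedge^{r-k}M_L\otimes L$ are routine, but the twists they introduce must be tracked carefully so that the final indices land on $(p',q')$.
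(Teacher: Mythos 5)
Your proof is correct and is essentially the paper's argument run in mirror image: the paper passes from $K_{p,q}(X,E,L)$ to the inclusion into $H^1(X,\wedge^{p+1}M_L\otimes E\otimes L^{\otimes(q-1)})$, applies Serre duality once, and then invokes Lemma \ref{koszulhigherMl} wholesale for $\omega_X\otimes E^{\vee}$ on the dual side (checking its hypotheses via Serre duality), whereas you run the same exterior-power climb on the $E$-side before dualizing and then need only the base-case identification from Proposition \ref{koszulkernel} on the dual side. The hypotheses are consumed in exactly the same way in both versions (the deliberately missing $H^1(X,E\otimes L^{\otimes(q-1)})=0$ accounts for the inequality rather than an equality, and the $i=n-1$ case of the first hypothesis collapses the final kernel), so this is the same proof up to the order in which Serre duality and the climbing induction are applied.
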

\begin{proof}
  Observe that with the additional vanishing $H^1(X,E\otimes L^{\otimes (q-1)})=0$, the two Koszul cohomology groups in (\ref{partialdualityeq}) would be dual to each other thanks to Green's Duality Theorem \cite[Theorem 2.c.6]{green}. However, the weaker result that we are after follows without that hypothesis.

  More precisely, by Proposition \ref{koszulkernel}, we know that
  \begin{equation}\dim K_{p,q}(X,E,L) \leq \dim H^1(X,\wedge^{p+1}M_L\otimes L^{\otimes (q-1)} \otimes E).\end{equation}
Using Serre's duality, we get
  \begin{align}
    H^1(X,\wedge^{p+1}M_L\otimes L^{\otimes (q-1)} \otimes E)^{\vee} & \cong H^{n-1}(X,\wedge^{p+1}M_L^{\vee}\otimes L^{\otimes(1-q)}\otimes \omega_X\otimes E^{\vee}) \\
    & \cong H^{n-1}(X,\wedge^{r-p-1}M_L\otimes L^{\otimes(2-q)}\otimes \omega_X\otimes E^{\vee})  
  \end{align}
  where in the last isomorphism we have used that $M_L$ is a vector bundle of rank $r=h^0(X,L)-1$ and determinant $\wedge^r M_L \cong L^{\vee}$. To conclude, it is enough to observe that by Serre's duality our hypotheses are the same as the vanishing conditions of Lemma \ref{koszulhigherMl}, so that we have
  \begin{equation}
   H^{n-1}(X,\wedge^{r-p-1}M_L\otimes L^{\otimes(2-q)}\otimes \omega_X\otimes E^{\vee}) \cong K_{h^0(X,L)-1-n-p,n+1-q}(X,\omega_X\otimes E^{\vee},L). 
  \end{equation}
\end{proof}

\section{Asymptotic syzygies and finite subschemes}

In this section we prove Theorem A from the Introduction. 

\begin{lemma}\label{basiclemma}
Let $X$ be a projective scheme, $B$ a line bundle on $X$ and $\xi\subseteq X$ a finite subscheme of length $p+1$ such that the evaluation map
\begin{equation}
\operatorname{ev}_{\xi}\colon H^0(X,B) \longrightarrow H^0(X,B\otimes \mathcal{O}_{\xi})
\end{equation}
is not surjective. Let also $L$ be an ample and globally generated line bundle on $X$ such that
\begin{enumerate}
	\item $H^1(X,\mathcal{I}_\xi\otimes B \otimes L^{\otimes q})=0$ for all $q>0$.
	\item $K_{p-1,2}(X,\mathcal{I}_\xi\otimes B,L)=0$.
	\item $H^1(X,\mathcal{I}_{\xi}\otimes L)=0$. 
\end{enumerate}
Then $K_{p,1}(X,B,L)\ne 0$.	
\end{lemma}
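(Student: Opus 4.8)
The plan is to translate the failure of surjectivity of $\operatorname{ev}_\xi$ into a non-vanishing syzygy by comparing the module $\Gamma_X(B,L)$ with the module supported on $\xi$. Write $V=H^0(X,L)$ and set $N=\Gamma_X(B,L)$, $M=\Gamma_X(\mathcal I_\xi\otimes B,L)$; the inclusion $\mathcal I_\xi\otimes B\hookrightarrow B$ realizes $M$ as a graded submodule of $N$, with quotient $Q:=N/M$. Applying the long exact sequence in Koszul cohomology \cite[Corollary (1.d.4)]{green} to $0\to M\to N\to Q\to 0$ produces
\[
K_{p,1}(X,B,L)=K_{p,1}(N;V)\longrightarrow K_{p,1}(Q;V)\longrightarrow K_{p-1,2}(M;V),
\]
and the last group is $K_{p-1,2}(X,\mathcal I_\xi\otimes B,L)=0$ by hypothesis (2). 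Hence the first arrow is surjective, and it suffices to prove $K_{p,1}(Q;V)\neq 0$.

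Next I would identify $Q$ explicitly. Taking cohomology of $0\to\mathcal I_\xi\otimes B\otimes L^{\otimes q}\to B\otimes L^{\otimes q}\to B\otimes\mathcal O_\xi\otimes L^{\otimes q}\to 0$ shows $M_q=\ker(\operatorname{ev}_{\xi,q})$, so that $Q_q\cong\operatorname{image}(\operatorname{ev}_{\xi,q})$ is a subspace of $\widetilde Q_q:=H^0(\xi,B\otimes\mathcal O_\xi\otimes L^{\otimes q})$; thus $Q$ is a graded submodule of $\widetilde Q:=\Gamma_X(B\otimes\mathcal O_\xi,L)$. The essential reduction is that, since these modules are supported on the finite scheme $\xi$, the $V$-action factors through $W:=H^0(\xi,L|_\xi)$; by hypothesis (3) the restriction $V\to W$ is surjective, so $\dim W=\operatorname{length}\xi=p+1$. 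I therefore regard $Q\subseteq\widetilde Q$ as graded $\operatorname{Sym}^\bullet W$-modules with $\dim W=p+1$.

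I would then apply Lemma \ref{stupidnonvanishing} over $W$ (so that its ``$r$'' equals $p$) to the pair $Q\subseteq\widetilde Q^{\,\geq 0}$, truncating $\widetilde Q$ to non-negative degrees so that the ambient module is finitely generated; this is harmless, since $K_{p,1}$ only involves degrees $0,1,2$. The hypotheses are checked as follows: $L|_\xi$ admits a nowhere-vanishing section $s$, which lies in $W$ by the surjectivity above, and multiplication by $s$ is an isomorphism $\widetilde Q_0\to\widetilde Q_1$, so that $(0:_{\widetilde Q_0}W)=0$; one has $Q_0\subsetneq\widetilde Q_0$ precisely because $\operatorname{ev}_\xi$ is not surjective; and $Q_1=\widetilde Q_1$ because $\operatorname{ev}_{\xi,1}$ is onto by hypothesis (1) at $q=1$. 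The lemma then yields $K_{p,1}(Q;W)\neq 0$.

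Finally I would transport this back to $V$. Choosing a splitting $V=W\oplus U$ with $\dim U=h^0(X,L)-1-p$, the Koszul complex (\ref{koszulcomplex}) of the $\xi$-supported module $Q$ decomposes, since the factors in $U$ act trivially, giving $K_{p,1}(Q;V)\cong\bigoplus_{a+b=p}K_{a,1}(Q;W)\otimes\wedge^b U$, which contains the nonzero summand $K_{p,1}(Q;W)$. Hence $K_{p,1}(Q;V)\neq 0$, and combined with the first paragraph this gives $K_{p,1}(X,B,L)\neq 0$. I expect the main obstacle to be exactly this last dimension reduction: the naturally infinite, $\mathbb Z$-graded module supported on $\xi$ must be viewed over the small space $W$ of dimension $p+1$ rather than over all of $V$, and it is this passage from $h^0(X,L)$ to $p+1$ variables that converts the ``top'' non-vanishing $K_{r,1}$ of Lemma \ref{stupidnonvanishing} into the desired $K_{p,1}$.
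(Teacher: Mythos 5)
Your proposal is correct and takes essentially the same route as the paper's own proof: the same short exact sequence and long exact sequence in Koszul cohomology (hypothesis (2) giving surjectivity onto the quotient's $K_{p,1}$), the same identification of the quotient as a submodule of $\Gamma_X(B\otimes\mathcal{O}_\xi,L)$ that is proper in degree $0$ and full in degree $1$, the same passage via hypothesis (3) to the $(p+1)$-dimensional space $H^0(\xi,L\otimes\mathcal{O}_\xi)$, the same appeal to Lemma \ref{stupidnonvanishing}, and the same change-of-rings decomposition of Koszul cohomology. The only differences are cosmetic: you apply Lemma \ref{stupidnonvanishing} before the decomposition rather than after, and you make explicit the (harmless) truncation to non-negative degrees that the paper builds into its definitions.
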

\begin{proof}
	Consider the short exact sequence of sheaves on $X$:
	\begin{equation}
	0 \to \mathcal{I}_{\xi}\otimes B \to B \to B\otimes \mathcal{O}_{\xi} \to 0.
	\end{equation}
	Twisting by powers of $L$ and taking global sections, we get an exact sequence of graded $\operatorname{Sym}^{\bullet} H^0(X,L)$-modules
	\begin{equation}\label{sequenceM}
	0 \to \bigoplus_{q\geq 0}H^0(X,\mathcal{I}_{\xi}\otimes B \otimes L^{\otimes q}) \to \bigoplus_{q\geq 0}H^0(X,B \otimes L^{\otimes q}) \to M \to 0.
	\end{equation}
	Moreover, assumption (1) shows that $M$ is a submodule of $\bigoplus_{q\geq 0} H^0(X,B\otimes L^{\otimes q}\otimes \mathcal{O}_{\xi})$ such that
	\begin{equation}\label{structureM}
	M_0 \subsetneq H^0(X,B\otimes \mathcal{O}_{\xi}), \qquad M_q=H^0(X,B\otimes L^{\otimes q}\otimes \mathcal{O}_{\xi}) \qquad \text{ for all } q>0. 
	\end{equation}
	The sequence (\ref{sequenceM}) induces an exact sequence in Koszul cohomology \cite[Corollary (1.d.4)]{green}
	\begin{equation}
	\dots \to K_{p,1}(X,B,L) \to K_{p,1}(M;H^0(X,L)) \to K_{p-1,2}(X,\mathcal{I}_{\xi}\otimes B,L) \to \dots
	\end{equation}
	and using assumption (2) we get that the natural map $K_{p,1}(X,B,L) \to K_{p,1}(M;H^0(X,L))$ is surjective. Hence, it is enough to show that $K_{p,1}(M;H^0(X,L))\ne 0$.

	To do this, observe that the structure of $\operatorname{Sym}^{\bullet}H^0(X,L)$-module on $M$ is induced by the structure as $\operatorname{Sym}^{\bullet}H^0(X,L\otimes \mathcal{O}_{\xi})$-module. Moreover, assumption (3) shows that the evaluation map $H^0(X,L)\longrightarrow H^0(X,L\otimes\mathcal{O}_\xi)$ is surjective. Hence, a standard argument for the computation of Koszul cohomology w.r.t.  different rings (see e.g. \cite[Lemma 2.1]{agostini_kuronya_lozovanu}) produces a decomposition
	\begin{equation}
	K_{p,1}(M;H^0(X,L)) \cong \bigoplus_{i=0}^p \wedge^{p-i}H^0(X,\mathcal{I}_{\xi} \otimes L) \otimes K_{i,1}(M;H^0(X,L\otimes \mathcal{O}_{\xi})).
	\end{equation}
	To conclude, the description of $M$ in (\ref{structureM}) and Lemma \ref{stupidnonvanishing}, give  $K_{p,1}(M;H^0(C,L\otimes \mathcal{O}_{\xi})) \ne 0$ and we are done.
\end{proof}

We need a statement for the asymptotic vanishing of high degree syzygies. This is probably already known but we include a proof for completeness. 

\begin{lemma}\label{asymptoticvanishingsyzygieshighdegree}
Let $X$ be a projective scheme, $A$ an ample line bundle and $P$ an arbitrary line bundle on $X$. For any  integer $d>0$ set $L_d = A^{\otimes d}\otimes P$. Fix a coherent sheaf $\mathcal{F}$ on $X$ and two integers $p\geq 0,q\geq 2$.
Then $K_{p,q}(X,\mathcal{F},L_d)=0$ for infinitely many $d$.
\end{lemma}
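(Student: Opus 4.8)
The plan is to use Lazarsfeld's kernel bundle description of Koszul cohomology (Proposition \ref{koszulkernel}) to rewrite $K_{p,q}(X,\mathcal{F},L_d)$ as the cokernel of a map whose target is $H^0(X,\wedge^p M_{L_d}\otimes \mathcal{F}\otimes L_d^{\otimes q})$, and then argue that this target itself vanishes for infinitely many $d$ once $q\geq 2$. The point is that the exterior powers $\wedge^p M_{L_d}$ are extensions of twists of $L_d^{\vee}$, so a group like $H^0(X,\wedge^p M_{L_d}\otimes \mathcal{F}\otimes L_d^{\otimes q})$ is controlled by groups of the form $H^j(X,\mathcal{F}\otimes L_d^{\otimes(q-k)})$ for $0\leq k\leq p$, and for $q\geq 2$ these can be made to vanish in the relevant degrees by Serre vanishing applied to the ample bundle $A$.

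First I would make the reduction precise. Taking exterior powers of the kernel bundle sequence (\ref{kernelbundles}) for $L=L_d$ gives, for each $k\geq 1$, a short exact sequence
\begin{equation}
0 \to \wedge^{k}M_{L_d}\otimes \mathcal{G} \to \wedge^{k}H^0(X,L_d)\otimes \mathcal{G} \to \wedge^{k-1}M_{L_d}\otimes L_d\otimes \mathcal{G} \to 0,
\end{equation}
where I will take $\mathcal{G}=\mathcal{F}\otimes L_d^{\otimes q}$. By Proposition \ref{koszulkernel} it suffices to show $H^0(X,\wedge^p M_{L_d}\otimes \mathcal{F}\otimes L_d^{\otimes q})=0$. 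I would prove, by descending induction on $k$ from $k=p$ down to $k=0$, that $H^0(X,\wedge^{k}M_{L_d}\otimes \mathcal{F}\otimes L_d^{\otimes(q-p+k)})=0$ for infinitely many $d$; the base case $k=0$ is $H^0(X,\mathcal{F}\otimes L_d^{\otimes(q-p)})$, and the inductive step chases the long exact sequence in cohomology, where the middle term and the connecting term involve $H^0$ and $H^1$ of twists $\mathcal{F}\otimes L_d^{\otimes m}$ with the exponent $m$ running over an interval of nonnegative integers (using $q\geq 2\geq p-k$ appropriately). The cleanest bookkeeping is to assert that all groups $H^j(X,\mathcal{F}\otimes L_d^{\otimes m})$ that appear, with $j\geq 1$ and $m\geq 0$, vanish for $d$ large.

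The mechanism producing the vanishing for infinitely many $d$ is Serre vanishing: for each fixed twist $\mathcal{F}\otimes P^{\otimes m}$ and each $j\geq 1$ there is a bound $d_0$ so that $H^j(X,\mathcal{F}\otimes P^{\otimes m}\otimes A^{\otimes e})=0$ for $e\geq d_0$, and the finitely many relevant shifts can be absorbed into a single threshold. Since $L_d^{\otimes m}=A^{\otimes dm}\otimes P^{\otimes m}$, taking $d$ large forces all the higher cohomology appearing in the induction to vanish, and it does so for all large $d$, hence for infinitely many. The main obstacle is purely combinatorial bookkeeping: one must check that throughout the induction every twisting exponent stays nonnegative (so that $q\geq 2$ is genuinely used and the argument does not secretly require negative powers of an ample bundle), and that only higher cohomology $H^{j\geq 1}$ is ever asked to vanish, never $H^0$ of a bundle that could be globally generated. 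Once this indexing is verified the result follows immediately; there is no deep geometry, only careful tracking of the twists through the Koszul/kernel-bundle exact sequences.
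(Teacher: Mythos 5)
Your reduction is fatally flawed at the very first step: you propose to show that the \emph{target} of the map in Proposition \ref{koszulkernel}, namely $H^0(X,\wedge^p M_{L_d}\otimes \mathcal{F}\otimes L_d^{\otimes q})$, vanishes for infinitely many $d$. But this group is exactly the space of Koszul cycles $\operatorname{Ker}\left[d_{p,q}\colon \wedge^pH^0(L_d)\otimes H^0(\mathcal{F}\otimes L_d^{\otimes q})\to \wedge^{p-1}H^0(L_d)\otimes H^0(\mathcal{F}\otimes L_d^{\otimes q+1})\right]$; it contains all the Koszul boundaries and is enormous for $d\gg 0$, not zero. The simplest case already kills the strategy: for $p=0$, $\mathcal{F}=\mathcal{O}_X$, $q=2$, the group in question is $H^0(X,L_d^{\otimes 2})$, whose dimension grows like $d^{\dim X}$, while $K_{0,2}(X,\mathcal{O}_X,L_d)$ vanishes for $d\gg0$ because the multiplication map $H^0(L_d)\otimes H^0(L_d)\to H^0(L_d^{\otimes 2})$ becomes \emph{surjective} (asymptotic projective normality). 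In other words, vanishing of $K_{p,q}$ for $q\geq 2$ is a statement about exactness of the Koszul complex, i.e.\ surjectivity onto the cycle space, never about the cycle space itself being zero; no amount of positivity can make it zero. This inconsistency surfaces inside your own argument: your base case asks for $H^0(X,\mathcal{F}\otimes L_d^{\otimes(q-p)})=0$, which is an $H^0$ of an increasingly positive sheaf when $q\geq p$ (Serre's theorem makes such groups large, and can only ever kill $H^{j}$ with $j\geq 1$), directly contradicting your closing claim that ``only higher cohomology $H^{j\geq 1}$ is ever asked to vanish''; and when $p>q$ the exponent $q-p$ is negative, so even the bookkeeping you flag as the ``main obstacle'' breaks down.

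For comparison, the paper's proof has genuinely different architecture, reflecting the fact that real content is needed beyond cohomology vanishing. For $X$ smooth and $\mathcal{F}$ locally free it quotes an asymptotic vanishing result (the proof of Theorem 4 in \cite{yang}), where the surjectivity/exactness input actually lives. It then extends to arbitrary coherent $\mathcal{F}$ on a smooth $X$ by induction on the length of a finite locally free resolution $0\to E_\ell\to\dots\to E_0\to\mathcal{F}\to 0$, splitting it into short exact sequences and using the long exact sequence in Koszul cohomology \cite[Corollary (1.d.4)]{green}; this is where $q\geq 2$ is genuinely used (to have the modules generated in degree $\geq 1$ fit into an exact sequence of graded modules after Serre vanishing of $H^1$). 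Finally, for an arbitrary projective scheme it chooses an embedding $j\colon X\hookrightarrow \mathbb{P}^n\times\mathbb{P}^m$ compatible with $A$ and $P$, pushes $\mathcal{F}$ forward, and compares Koszul groups over the two rings via \cite[Lemma 2.1]{agostini_kuronya_lozovanu}. If you want to salvage a kernel-bundle argument, the group you should aim to kill is $H^1(X,\wedge^{p+1}M_{L_d}\otimes L_d^{\otimes(q-1)}\otimes\mathcal{F})$ from the second description in Proposition \ref{koszulkernel}; but note that even there the long exact sequence reduces you to a surjectivity statement of the same Koszul type, so a purely formal Serre-vanishing chase cannot close the induction.
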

\begin{proof}
  First suppose that $X$ is smooth. In this case we claim that $K_{p,q}(X,\mathcal{F},L_d)=0$ for all $d\gg 0$. If $\mathcal{F}$ is locally free, we have $K_{p,q}(X,\mathcal{F},L_d)=0$ for $d\gg 0$, thanks for example to \cite[Proof of Theorem 4]{yang}. Assume now that $\mathcal{F}$ is an arbitrary coherent sheaf. Since $X$ is smooth, $\mathcal{F}$ has a finite resolution by locally free sheaves: we can choose a resolution with the minimum length $\ell$, so that we get an exact complex
\begin{equation}
0 \to E_{\ell} \to E_{\ell-1} \to \dots \to E_0 \to \mathcal{F} \to 0,
\end{equation}
where the $E_i$ are locally free. We proceed to prove the lemma by induction on $\ell$. If $\ell=0$ then $\mathcal{F}$ is locally free and we are done. If $\ell>0$, we can split the resolution into two exact complexes
\begin{align}
   0 \to \mathcal{G} \to &E_0 \to \mathcal{F} \to 0, \\
  0 \to E_{\ell} \to E_{\ell-1} \to &\dots \to E_1 \to \mathcal{G} \to 0. 
\end{align}
Since $d\gg 0$, we get $H^1(X,\mathcal{G}\otimes L^{\otimes q}_d) = 0$ for all $q\geq 1$, so that we obtain a short exact sequence of $\operatorname{Sym}^{\bullet} H^0(X,L_d)$-graded modules:
\begin{equation}
0 \to \bigoplus_{h\geq 1} H^0(X,\mathcal{G}\otimes L^{\otimes h}_d) \to \bigoplus_{h\geq 1} H^0(X,E_0 \otimes L_d^{\otimes h}) \to  \bigoplus_{h\geq 1}H^0(X,\mathcal{F}\otimes L_d^{\otimes h}) \to 0
\end{equation}
Since $q\geq 2$, this sequence  induces an exact sequence in Koszul cohomology \cite[Corollary (1.d.4)]{green}:
\begin{equation}
\dots \to K_{p,q}(X,E_0,L_d) \to K_{p,q}(X,\mathcal{F},L_d) \to K_{p-1,q+1}(X,\mathcal{G},L_d) \to \dots
\end{equation}
If $d\gg 0$ we know that $K_{p,q}(X,E_0,L_d)=0$ because $E_0$ is locally free.  Moreover, $K_{p-1,q+1}(X,\mathcal{G},L_d)=0$ by induction hypothesis. Hence, $K_{p,q}(X,\mathcal{F},L_d)=0$ as well, and we are done.

Now take an arbitrary projective scheme $X$. We claim that it is enough to find a closed embedding $j\colon X \hookrightarrow Y$ such that $Y$ is smooth and it has two line bundles $\widetilde{A},\widetilde{P}$, with $\widetilde{A}$ ample,  such that $j^*\widetilde{A} \cong A$ and $j^*\widetilde{P} \cong P$.  Indeed, in this case set $\widetilde{L}_d = \widetilde{P}\otimes \widetilde{A}^{\otimes d}$: if $d\gg 0$ we can assume that the restriction map
\begin{equation}\label{restrictionmapinlemmavanishing}
H^0(Y,\widetilde{L}_d) \to H^0(X,L_d)
\end{equation}
is surjective.
Since $Y$ is smooth, what we have already proved shows that $K_{p,q}(Y,j_*\mathcal{F},\widetilde{L}_d) = 0$ for $d\gg 0$. However, the structure of $\operatorname{Sym}^{\bullet}H^0(Y,L_d)$-module on
\begin{equation}
\bigoplus_h H^0(Y,j_*\mathcal{F}\otimes \widetilde{L}^{\otimes h}_d) = \bigoplus_h H^0(X,\mathcal{F}\otimes L_d^{\otimes h}) 
\end{equation}
is induced by the structure of $\operatorname{Sym}^{\bullet} H^0(X,L_d)$-module via the map (\ref{restrictionmapinlemmavanishing}). Hence, using a standard result on Koszul cohomology w.r.t. two different rings \cite[Lemma 2.1]{agostini_kuronya_lozovanu}, we see that $K_{p,q}(X,\mathcal{F},L_d)=0$ as well.

Now, we just need to find the embedding $j\colon X \hookrightarrow Y$. Observe that in the original statement we can replace $P$ by a translate $P\otimes A^{\otimes h}$, and $A$ by a positive multiple $A^{\otimes m}$. Hence, we choose $h,k$ positive such that both $P\otimes A^{\otimes h}$ and $P^{\vee}\otimes A^{\otimes k}$ are very ample, and consider the induced closed embedding $\varphi\colon X \hookrightarrow \mathbb{P}^n \times \mathbb{P}^m$. Then we see that $\varphi^*\mathcal{O}(1,0) = P \otimes A^{\otimes h}$, $\varphi^*\mathcal{O}(1,1) =  A^{\otimes(h+k)}$.  Since $\mathcal{O}(1,1)$ is ample, we are done.
\end{proof}

With this we could already give the proof of the first part of Theorem A, but we postpone it until the end of the next section, so that we can also prove the second part.

\subsection{An effective result for spanned line bundles}\label{subsectioneffective}

Here we give a proof of the second part of Theorem A. The idea is to find effective bounds for the conditions of Lemma \ref{basiclemma}. The essential reason that we restrict to spannedness instead of very ampleness is to have an effective vanishing statement along the lines of Lemma \ref{asymptoticvanishingsyzygieshighdegree}: this is given by a result of Ein and Lazarsfeld \cite[Theorem 2]{ein_lazarsfeld_effective_vanishing}.

The proof is essentially by induction on the dimension of $X$ and it is based on the next lemmas.
A word about notation: if $Y\subseteq X$ is an inclusion of varieties and if $L$ is a line bundle on $X$, we denote by $L_Y$ the restriction of $L$ to $Y$.

\begin{lemma}\label{lemmaonedivisor}
Let $X$ be a smooth projective variety, $L$ an ample and globally generated line bundle on $X$, $B$ another line bundle and $p\geq 0$ an integer. Let also $D\subseteq X$ be a divisor such that:
\begin{enumerate}
\item $H^1(X,L^{\otimes q} \otimes B \otimes \mathcal{O}_X(-D)) = 0$\, for all $q\geq 0$.
\item $K_{p-1,2}(X,B\otimes\mathcal{O}_X(-D),L)=0$.
\item $H^1(X,L\otimes \mathcal{O}_X(-D)) = 0$.
\end{enumerate}
Then the natural maps
\begin{equation} 
H^0(X,B) \to H^0(D,B_D), \qquad K_{p,1}(X,B,L) \to K_{p,1}(D,B_D,L_D) 
\end{equation}
are surjective.
\end{lemma}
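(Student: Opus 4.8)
The plan is to mirror the argument of Lemma \ref{basiclemma}, replacing the structure sequence of a finite subscheme by that of the divisor $D$. First I would consider the short exact sequence of sheaves on $X$
\begin{equation}
0 \to B \otimes \mathcal{O}_X(-D) \to B \to B_D \to 0,
\end{equation}
twist it by the powers $L^{\otimes q}$ and take global sections. Assumption (1) guarantees that $H^0(X, B \otimes L^{\otimes q}) \to H^0(D, B_D \otimes L_D^{\otimes q})$ is surjective for every $q \geq 0$; the case $q=0$ is exactly the first of the two asserted surjections. Collecting all $q$, these sequences assemble into a short exact sequence of graded $\operatorname{Sym}^{\bullet} H^0(X,L)$-modules
\begin{equation}
0 \to \bigoplus_{q \geq 0} H^0(X, B \otimes \mathcal{O}_X(-D) \otimes L^{\otimes q}) \to \bigoplus_{q \geq 0} H^0(X, B \otimes L^{\otimes q}) \to \Gamma_D(B_D, L_D) \to 0,
\end{equation}
where the quotient is the module of sections of $B_D$ on $D$, regarded as a module over the larger ring $\operatorname{Sym}^{\bullet} H^0(X,L)$ (here $L_D$ is still ample and globally generated, so this module is well defined).

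Next I would feed this into the long exact sequence in Koszul cohomology \cite[Corollary (1.d.4)]{green}:
\begin{equation}
\dots \to K_{p,1}(X,B,L) \to K_{p,1}(\Gamma_D(B_D, L_D); H^0(X,L)) \to K_{p-1,2}(X, B \otimes \mathcal{O}_X(-D), L) \to \dots
\end{equation}
By assumption (2) the rightmost group vanishes, so the map $K_{p,1}(X,B,L) \to K_{p,1}(\Gamma_D(B_D, L_D); H^0(X,L))$ is surjective. It then remains to compare Koszul cohomology over the two rings $H^0(X,L)$ and $H^0(D, L_D)$.

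For this I would invoke assumption (3): since $H^1(X, L \otimes \mathcal{O}_X(-D)) = 0$, the restriction $H^0(X,L) \to H^0(D, L_D)$ is surjective, with kernel $H^0(X, L \otimes \mathcal{O}_X(-D))$. The change-of-rings result \cite[Lemma 2.1]{agostini_kuronya_lozovanu}, exactly as used in Lemma \ref{basiclemma}, then yields a direct sum decomposition
\begin{equation}
K_{p,1}(\Gamma_D(B_D, L_D); H^0(X,L)) \cong \bigoplus_{i=0}^{p} \wedge^{p-i} H^0(X, L \otimes \mathcal{O}_X(-D)) \otimes K_{i,1}(D, B_D, L_D),
\end{equation}
in which the natural restriction map to $K_{p,1}(D, B_D, L_D) = K_{p,1}(\Gamma_D(B_D, L_D); H^0(D, L_D))$ is identified with the projection onto the $i=p$ summand, hence surjective. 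Composing the two surjections gives the desired surjectivity.

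The one point requiring care — and the step I expect to be the main obstacle — is to verify that this composite genuinely coincides with the \emph{canonical} restriction map $K_{p,1}(X,B,L) \to K_{p,1}(D, B_D, L_D)$ induced by $B \to B_D$ and $H^0(X,L) \to H^0(D,L_D)$, rather than being merely some abstract surjection. This is a compatibility check: one traces the naturality of the connecting maps in the long exact sequence above together with the functoriality of the decomposition in \cite[Lemma 2.1]{agostini_kuronya_lozovanu} with respect to the ring surjection $\operatorname{Sym}^{\bullet} H^0(X,L) \to \operatorname{Sym}^{\bullet} H^0(D,L_D)$. Once this diagram chase is settled, the two surjections compose to the natural map and the proof is complete.
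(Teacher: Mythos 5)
Your proposal is correct and follows essentially the same route as the paper's own proof: the identical short exact sequence of graded $\operatorname{Sym}^{\bullet}H^0(X,L)$-modules coming from hypothesis (1), the long exact sequence in Koszul cohomology combined with hypothesis (2), and the change-of-rings decomposition of \cite[Lemma 2.1]{agostini_kuronya_lozovanu} via hypothesis (3), exactly as in Lemma \ref{basiclemma}. The final compatibility check you flag is treated as implicit in the paper, which simply identifies the composite with the natural restriction map, so your argument needs nothing further.
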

\begin{proof}
The proof goes along the same lines as that of Lemma \ref{basiclemma}, so we give here just a sketch. Hypothesis (1) gives  a short exact sequence of graded $\operatorname{Sym}^{\bullet}H^0(X,L)$-modules:
\begin{equation}
 0 \to \bigoplus_{q\geq 0}H^0(X,L^{\otimes q} \otimes B \otimes \mathcal{O}_X(-D)) \to \bigoplus_{q\geq 0} H^0(X,B\otimes L^{\otimes q}) \to M \to 0
\end{equation}
where $M = \bigoplus_{q\geq 0}H^0(D,L_D^{\otimes q} \otimes  B_{D})$. The long exact sequence in Koszul cohomology and hypothesis (2) show that the natural map
\begin{equation}
K_{p,1}(X,B,L) \to K_{p,1}(M;H^0(X,L))
\end{equation}
is surjective. Using hypothesis (3) and a standard argument for the computation of Koszul cohomology w.r.t.  different rings we get a natural surjective map 
\begin{equation}
K_{p,1}(M;H^0(X,L)) \to  K_{p,1}(D,B_D,L_D).
\end{equation}
In particular, the composite map $K_{p,1}(X,B,L) \to K_{p,1}(X,B_D,L_D)$ is surjective, and this is the map we were looking for. 	
\end{proof}

\begin{lemma}\label{auxiliarydivisor}
Let $X$ be a smooth and irreducible projective variety of dimension  $n\geq 2$. Let $\xi\subseteq X$ be a curvilinear subscheme of length $\ell(\xi)=k$ and $H$ an ample and $k$-jet very ample line bundle on $X$. Then there exists a smooth and irreducible divisor $D\in |H|$ such that $\xi\subseteq D$.
\end{lemma}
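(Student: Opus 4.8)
The plan is to produce $D$ as a general member of the linear system $|\mathcal{I}_\xi \otimes H|$ of divisors in $|H|$ passing through $\xi$. Every member of this system contains $\xi$ scheme-theoretically by construction, so the entire content of the lemma is the smoothness and irreducibility of a general member. I would reduce both to a single input: that the sheaf $\mathcal{I}_\xi \otimes H$ is globally generated. Granting this, Bertini's theorem in characteristic zero gives that a general $D \in |\mathcal{I}_\xi \otimes H|$ is smooth away from the base locus, and global generation forces that base locus to be supported on $\operatorname{Supp}\xi$; smoothness at the finitely many points of $\operatorname{Supp}\xi$ I would check by hand (again from global generation), and irreducibility I would deduce from connectedness via Kodaira vanishing.

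The heart of the argument, and the step where both hypotheses enter, is the global generation of $\mathcal{I}_\xi \otimes H$. Write $\operatorname{Supp}\xi = \{x_1,\dots,x_r\}$ with $\xi$ of local length $a_i$ at $x_i$, so that $\sum_i a_i = k$. Since $\xi$ is curvilinear I may choose regular local coordinates $(t,u_2,\dots,u_n)$ at $x_i$ in which $\mathcal{I}_{\xi,x_i} = (t^{a_i}, u_2,\dots,u_n)$; the key numerical observation is $\mathfrak{m}_{x_i}^{a_i} \subseteq \mathcal{I}_{\xi,x_i}$, so that prescribing a sufficiently high jet of a section already controls its vanishing on $\xi$ at $x_i$. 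To generate the stalk at $x_i$ I would apply $k$-jet very ampleness to the zero-cycle $\zeta = (a_i+1)x_i + \sum_{l\neq i} a_l x_l$, which has degree exactly $(a_i+1)+\sum_{l\neq i}a_l = k+1$; surjectivity of $\operatorname{ev}_\zeta$ then lets me prescribe, at $x_i$, jets in $\mathcal{O}_{x_i}/\mathfrak{m}_{x_i}^{a_i+1}$ equal to $t^{a_i}, u_2,\dots,u_n$ while vanishing to order $a_l$ (hence on the part of $\xi$ at $x_l$) at the other points. Checking that $\mathfrak{m}_{x_i}^{a_i+1}\subseteq \mathfrak{m}_{x_i}\mathcal{I}_{\xi,x_i}$ shows these sections vanish on $\xi$ and have images forming a basis of $\mathcal{I}_{\xi,x_i}/\mathfrak{m}_{x_i}\mathcal{I}_{\xi,x_i}$, so by Nakayama they generate the stalk. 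For a point $y\notin\operatorname{Supp}\xi$ the same idea with the cycle $\sum_i a_i x_i + y$ of degree $k+1$ produces a section vanishing on $\xi$ but not at $y$. This is exactly the place where curvilinearity is indispensable: it is what keeps the length of the thickened cycle down to $k+1$, so that the $k$-jet very ampleness hypothesis is applicable.

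With global generation in hand the rest is routine. Bertini (char.\ $0$) gives a general $D\in|\mathcal{I}_\xi\otimes H|$ smooth on $X\setminus\operatorname{Supp}\xi$, while at each $x_i$ global generation provides a section with nonvanishing differential (for instance the one with leading term $u_2$), so that the general member is smooth at $x_i$ as well; intersecting finitely many dense open conditions, a general $D$ is smooth everywhere. For irreducibility I would observe that $D$ is an effective ample divisor with $n\geq 2$: from the sequence $0\to \mathcal{O}_X(-D)\to \mathcal{O}_X\to\mathcal{O}_D\to 0$ together with $H^1(X,\mathcal{O}_X(-D))=H^1(X,H^{\vee})=0$ (Serre duality and Kodaira vanishing, using $n\geq 2$) one gets $h^0(\mathcal{O}_D)=1$, so the smooth $D$ is connected, hence irreducible. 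The main obstacle is genuinely the local computation of the previous paragraph: one must match the thickening needed to detect each minimal generator of $\mathcal{I}_\xi$ against the precise degree $k+1$ permitted by $k$-jet very ampleness, and it is the curvilinear hypothesis that makes these two numbers agree.
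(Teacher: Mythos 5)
Your proposal is correct and follows essentially the same route as the paper: both work with the linear system of divisors in $|H|$ passing through $\xi$, apply Bertini away from $\operatorname{Supp}\xi$, and use the curvilinear local form $\mathcal{I}_{\xi,x_i}=(t^{a_i},u_2,\dots,u_n)$ together with $k$-jet very ampleness (via the containment $\mathfrak{m}_{x_i}^{a_i}\subseteq\mathcal{I}_{\xi,x_i}$) to produce a section with nonvanishing differential at each point of $\xi$. The only minor differences are that you establish full global generation of $\mathcal{I}_\xi\otimes H$ where the paper only needs base-point freeness away from $\operatorname{Supp}\xi$, and you deduce irreducibility from smoothness plus connectedness via Kodaira vanishing (using $n\geq 2$), where the paper invokes the irreducibility statement of Bertini's theorem directly.
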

\begin{proof}
Consider the linear system $V = H^0(X,H\otimes \mathcal{I}_{\xi})$. We will show that a general divisor in $|V|$ is smooth and irreducible.
We first show that $V$ has base points only at the points of $\xi$. If $P\notin \xi$, the subscheme $\xi \cup \{P\}$ has length $k+1$, and since $H$ is in particular $k$-very ample, the map $\operatorname{ev}_{\xi\cup \{P\}}\colon H^0(X,H) \to H^0(X,H\otimes \mathcal{O}_{\xi\cup \{ P\}})$ is surjective.  Hence $P$ is not a base point of $V$. Now, Bertini's theorem tells us that a general divisor $D\in |V|$ is irreducible and nonsingular away from the support of $\xi$. We need to check what happens at the points in $\xi$, and for this we can suppose that $\xi$ is supported at a single point $P$. Since $\xi$ is curvilinear, we can find \cite[Remarks 2.1.7, 2.1.8]{goettsche} analytic coordinates $(x_1,\dots,x_n)$ around $P$ such that we have the local description $\mathcal{I}_{\xi} = (x_1,\dots,x_{n-1},x_n^k)$. Moreover, as $H$ is $k$-jet very ample, the map $H^0(X,H) \to H^0(X,H\otimes \mathcal{O}_X/\mathfrak{m}_P^k)$ is surjective. Hence, the power series expansion of a general section $\sigma \in V$ around $P$ has a nonzero coefficient for $x_1$, so that $\sigma$ defines a divisor which is nonsingular at $P$.
\end{proof}

Now we can start the proof of the second part of Theorem A. The first case is that of curves. 

\begin{prop}\label{effnonvanishingcurves}
	Let $C$ be a smooth, projective and irreducible curve of genus $g$, and $B$ a line bundle which is not $p$-very ample. Let also $L$ be a line bundle such that
	\begin{equation}\label{conditionsdegrees}
	\deg L \geq 2g+p+1, \qquad \deg (L\otimes B) \geq 2g+p+1.
	\end{equation}
	Then $K_{p,1}(C,B,L)\ne 0$. 
\end{prop}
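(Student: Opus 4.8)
The plan is to derive the nonvanishing directly from Lemma \ref{basiclemma}. Since $\deg L \geq 2g+p+1 \geq 2g+1$, the bundle $L$ is very ample, hence ample and globally generated, so that lemma applies. Because $B$ is not $p$-very ample and every finite subscheme of a smooth curve is an effective divisor whose length equals its degree, there is an effective divisor $\xi \subseteq C$ of degree $p+1$ for which $\operatorname{ev}_\xi\colon H^0(C,B) \to H^0(C, B\otimes \mathcal{O}_\xi)$ fails to be surjective; here $\mathcal{I}_\xi = \mathcal{O}_C(-\xi)$. It then suffices to verify the three hypotheses of Lemma \ref{basiclemma} for this $\xi$, after which $K_{p,1}(C,B,L)\neq 0$ is immediate.

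Hypotheses (1) and (3) are a matter of degree counting, using that $H^1$ of a line bundle on $C$ vanishes as soon as its degree exceeds $2g-2$. For (3), the bundle $\mathcal{I}_\xi\otimes L = \mathcal{O}_C(-\xi)\otimes L$ has degree $\deg L - (p+1) \geq 2g > 2g-2$. For (1), the bundle $\mathcal{I}_\xi\otimes B\otimes L^{\otimes q}$ has degree $q\deg L + \deg B - (p+1)$; since $\deg L > 0$ this is smallest at $q=1$, where it equals $\deg(L\otimes B) - (p+1) \geq 2g > 2g-2$, so the vanishing holds for all $q>0$. Thus both conditions are forced exactly by the two inequalities in (\ref{conditionsdegrees}).

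The crux is hypothesis (2), the vanishing $K_{p-1,2}(C, \mathcal{O}_C(-\xi)\otimes B, L)=0$. Writing $B' = \mathcal{O}_C(-\xi)\otimes B$, I would pass to kernel bundles via Proposition \ref{koszulkernel}: since $\deg(B'\otimes L) = \deg(L\otimes B) - (p+1) \geq 2g > 2g-2$ gives $H^1(C, B'\otimes L)=0$, the second description in that proposition identifies $K_{p-1,2}(C,B',L)$ with $H^1(C, \wedge^{p} M_L \otimes B' \otimes L)$. Setting $N = B'\otimes L$, with $\deg N \geq 2g$, the task reduces to the single vanishing $H^1(C, \wedge^{p} M_L \otimes N)=0$. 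This lies in the range governed by Green's vanishing theorem for syzygies on curves \cite{green}: the hypotheses $\deg L \geq 2g+p+1$ and $\deg N \geq 2g$ are precisely calibrated to kill this group for the $p$-th exterior power of the kernel bundle. I would establish it by induction on the exterior power, using the exact sequences obtained from the exterior powers of (\ref{kernelbundles}) and the surjectivity of the resulting multiplication maps.

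The main obstacle is this last vanishing. Unlike (1) and (3) it is not a mere degree count: $H^1(C, \wedge^{a} M_L \otimes N)$ need not vanish once $a$ is large even when $\deg N$ is large (for instance $\wedge^{r}M_L = L^{\vee}$ gives $H^1(C, N\otimes L^{\vee})$, which is nonzero when $\deg L$ far exceeds $\deg N$), so the condition $\deg L \geq 2g+p+1$ relating the positivity of $L$ to the exterior power $a=p$ is essential. Controlling the multiplication maps $\wedge^{a}H^0(L)\otimes H^0(N) \to H^0(\wedge^{a-1}M_L \otimes L\otimes N)$ that appear in the induction — equivalently, invoking the effective, twisted form of Green's theorem — is where the real work lies; everything else is bookkeeping with the two degree inequalities.
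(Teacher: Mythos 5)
Your overall frame --- reducing to Lemma \ref{basiclemma}, with hypotheses (1) and (3) checked by degree counts --- is the same as the paper's, and those two verifications are correct, as is your identification of hypothesis (2) with the group $H^1(C,\wedge^{p}M_L\otimes N)$, $N=B\otimes\mathcal{O}_C(-\xi)\otimes L$, via Proposition \ref{koszulkernel}. The gap is the final claim that this $H^1$ vanishes because $\deg L\geq 2g+p+1$ and $\deg N\geq 2g$, the rest being bookkeeping. That claim is false, and no induction on exterior powers can establish it, because the vanishing is not governed by degrees at all. Indeed, since $\deg N\geq 2g$ gives $H^1(C,N)=0$, Proposition \ref{koszulkernel} identifies $H^1(C,\wedge^{p}M_L\otimes N)$ with $K_{p-1,1}(C,N,L)$; if your claim held for every $L$ with $\deg L\geq 2g+p+1$, then $K_{p-1,1}(C,N,L)$ would vanish for $L\gg 0$, and the first part of Theorem A (proved independently of this proposition) would force $N$ to be $(p-1)$-very ample. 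But a line bundle of degree exactly $2g$ need not be: for $p\geq 2$ take $N=\omega_C\otimes\mathcal{O}_C(x+y)$ on a curve of genus $g\geq 2$, which has $h^0(N)=g+1$ while $h^0(N\otimes\mathcal{O}_C(-x-y))=h^0(\omega_C)=g>h^0(N)-2$, so $N$ is not even $1$-very ample. Thus the vanishing you need is a syzygy-theoretic condition governed by the $(p-1)$-very ampleness of $N$, of the same nature as the statement being proved. Moreover this failure occurs inside the setting of the proposition: with $B=\omega_C\otimes\mathcal{O}_C(x+y)\otimes L^{\vee}\otimes\mathcal{O}_C(\xi)$ and $\xi$ effective of degree $p+1$, one has $h^0(C,B)=0$ (so $B$ is not $p$-very ample and $\xi$ is a legitimate witness) and $\deg(L\otimes B)=2g+p+1$, yet hypothesis (2) fails for this witness, so the route through Lemma \ref{basiclemma} itself breaks down, not merely your justification of it.

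What the paper does is split into cases according to $h^0(C,B)$, and this is exactly what your proposal is missing. If $h^0(C,B)\geq p+1$, the failure of surjectivity of $\operatorname{ev}_{\xi}$ forces $h^0(C,B\otimes\mathcal{O}_C(-\xi))>h^0(C,B)-(p+1)\geq 0$, i.e. $B\otimes\mathcal{O}_C(-\xi)$ is \emph{effective}; then Green's theorem gives $H^1(C,\wedge^pM_L\otimes L)=0$ (using $\deg L\geq 2g+p$), and tensoring a vector bundle on a curve by an effective line bundle preserves $H^1$-vanishing, which proves hypothesis (2). So in this case the correct extra input is effectivity, not a degree count. If instead $h^0(C,B)\leq p$, the paper abandons Lemma \ref{basiclemma} entirely: by Proposition \ref{koszulkernel}, $K_{p,1}(C,B,L)$ is the cokernel of $\wedge^{p+1}H^0(C,L)\otimes H^0(C,B)\to H^0(C,\wedge^pM_L\otimes L\otimes B)$, and a Riemann--Roch estimate of $\chi(C,\wedge^pM_L\otimes L\otimes B)$ shows the target has strictly larger dimension than the source, so the cokernel is nonzero. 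Your proposal needs both of these arguments; as written, its crucial step is not just unproved but false.
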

\begin{proof}
	Observe that $L$ is ample and globally generated. Suppose first that $h^0(C,B)\geq p+1$. Let $\xi \subseteq C$ be an effective divisor of degree $p+1$ such that the evaluation map $\operatorname{ev}_{\xi} \colon H^0(C,B) \to H^0(C,B\otimes \mathcal{O}_{\xi})$ is not surjective. We show now that $L$ satisfies the conditions of Lemma \ref{basiclemma}. Since $\deg L \geq 2g+p$ and $B$ is effective, it is easy to see that conditions (1) and (3) hold. To check condition (2), we need to show that $K_{p,1}(C,B\otimes \mathcal{O}_C(-\xi),L)=0$. By Proposition \ref{koszulkernel} it is enough to show that
	$ H^1(C,\wedge^{p}M_L\otimes L \otimes B\otimes \mathcal{O}_C(-\xi))=0$. Since $\deg L \geq 2g+p$, a result of Green \cite[Theorem (4.a.1)]{green} gives that $H^1(C,\wedge^pM_L\otimes L)=0$. Hence, if we can prove that $B\otimes \mathcal{O}_C(-\xi)$ is effective, it follows that $ H^1(C,\wedge^{p}M_L\otimes L \otimes B\otimes \mathcal{O}_C(-\xi))=0$ as well. To check that $B\otimes \mathcal{O}_C(-\xi)$ is effective, observe that $h^0(C,B)\geq p+1$ by assumption, and moreover the evaluation map $\operatorname{ev}_{\xi}$ is not surjective, so that $h^0(C,B\otimes \mathcal{O}_C(-\xi)) > h^0(C,B)-p-1 \geq 0$, and we are done.
	
	Now assume $h^0(C,B)\leq p$. Proposition \ref{kernelbundles} gives that $K_{p,1}(C,B,L)$ is the cokernel  of the map
	\begin{equation}
	\wedge^{p+1}H^0(C,L)\otimes H^0(C,B) \to H^0(C,\wedge^p M_L \otimes L \otimes B). 
	\end{equation}
	Thus, to prove what we want it is enough to show that
	\begin{equation}\label{comparison}
	\dim \wedge^{p+1}H^0(C,L)\otimes H^0(C,B) < \dim  H^0(C,\wedge^p M_L \otimes L \otimes B).
	\end{equation} 
	To do this, set $d=\deg L$ and $b=\deg B$. We can estimate the dimension of $H^0(C,\wedge^p M_L \otimes L \otimes B)$ via the Euler characteristic, which is easy to compute with Riemann-Roch:
	\begin{equation}
	h^0(C,\wedge^p M_L \otimes L \otimes B) \geq \chi(C,\wedge^p M_L \otimes L \otimes B)  = \binom{d-g}{p}\left( - p\cdot \frac{d}{d-g} + d + b \right).
	\end{equation} 
	Now, suppose that $0<h^0(C,B) \leq p$: in particular $b\geq 0$. We can just bound the left hand side of (\ref{comparison}) by $\binom{d+1-g}{p+1}p$ and then a computation shows that (\ref{comparison}) holds, thanks to $d\geq 2g+p+1$ and $b\geq 0$.
	
	The last case is when $h^0(C,B)=0$. To prove (\ref{comparison}) it is enough to show that $\chi(C,\wedge^p M_L \otimes L \otimes B) > 0$. This can be checked by a computation, using the assumption that $d+b \geq 2g+p+1$.
\end{proof}

\begin{rmk}
  Going through the computation of Proposition \ref{effnonvanishingcurves} more carefully, it is not hard to show that the assumption on $L$ can be weakened to $\deg L \geq 2g+p$, at least when $C$ has genus $g\geq 2$. In this case, setting $B=\omega_C$, Proposition \ref{effnonvanishingcurves} gives that if $C$ has gonality $k$, then
  \begin{equation}
    K_{k-1,1}(C,\omega_C,L) \cong K_{h^0(L)-k-1,1}(C,\mathcal{O}_C,L)\ne 0
  \end{equation}
  for every line bundle $L$ of degree $\deg L \geq 2g+k-1$. This is well-known and an easy consequence of the Green-Lazarsfeld Nonvanishing Theorem \cite[Appendix]{green}. Conversely, Farkas and Kemeny proved a vanishing theorem in \cite[Theorem 0.2]{farkas_kemeny}:  if $C$ is a general $k$-gonal curve of genus at least $4$, then $K_{h^0(L)-k,1}(C,\mathcal{O}_C,L)=0$, when $\deg L \geq 2g+k-1$. However they note in the same paper that this vanishing does not hold for every curve. 
\end{rmk}

Now we can give the full proof for the second part of Theorem A: we rewrite the statement below for clarity, and we formulate it as a nonvanishing statement.

\begin{thm}\label{thmAsecondpart}
Let $X$ be a smooth and irreducible projective variety of dimension $n$, and $B$ a line bundle on $X$ which is not $p$-spanned. Then $K_{p,1}(X,B,L)\ne 0$ for every line bundle $L$ of the form
\begin{equation}
L = \omega_X\otimes A^{\otimes d}\otimes P^{\otimes(n-1)}\otimes N, \qquad d \geq (n-1)(p+1)+p+3,
\end{equation} 
where $A$ is a very ample line bundle, $P$ a globally generated line bundle such that $P\otimes B^{\vee}$ is nef, and $N$ is a nef line bundle such that $N\otimes B$ is nef.
\end{thm}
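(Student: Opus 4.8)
The plan is to induct on $n=\dim X$, peeling off at each stage a smooth divisor of one dimension less. The base case $n=1$ is exactly Proposition \ref{effnonvanishingcurves}: on a curve $p$-spanned and $p$-very ample coincide, and a bundle $L=\omega_C\otimes A^{\otimes d}\otimes N$ with $d\geq p+3$ satisfies $\deg L\geq 2g+p+1$ and $\deg(L\otimes B)\geq 2g+p+1$, since $\deg A\geq 1$ and $N$, $N\otimes B$ are nef. For the inductive step assume $n\geq 2$. As $B$ is not $p$-spanned, there is a curvilinear subscheme $\xi\subseteq X$ of length $p+1$ for which $\operatorname{ev}_\xi\colon H^0(X,B)\to H^0(X,B\otimes\mathcal O_\xi)$ is not surjective; this $\xi$ is what I propagate down the induction.

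The decisive choice is the divisor. I would take $D\in|A^{\otimes(p+1)}\otimes P|$. This bundle is ample, and being a tensor product of the very ample $A$ taken $p+1$ times with the globally generated $P$, it is $(p+1)$-jet very ample; hence Lemma \ref{auxiliarydivisor} produces a smooth irreducible $D$ in this system with $\xi\subseteq D$. The reason for this precise $\mathcal O_X(D)$ is adjunction: since $\omega_D=(\omega_X\otimes A^{\otimes(p+1)}\otimes P)|_D$, one computes
\begin{equation*}
L_D=\omega_D\otimes A_D^{\otimes(d-p-1)}\otimes P_D^{\otimes(n-2)}\otimes N_D,
\end{equation*}
which is exactly the required shape on the $(n-1)$-dimensional $D$, with $d-p-1\geq(n-2)(p+1)+p+3$ matching the numerical bound, and with $A_D$ very ample and $P_D\otimes B_D^{\vee}$, $N_D\otimes B_D$ nef by restriction.

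Next I would check the three hypotheses of Lemma \ref{lemmaonedivisor} for $\mathcal O_X(-D)=A^{\otimes-(p+1)}\otimes P^{-1}$. Writing each twist as $\omega_X\otimes(\text{something})$, for $q\geq 1$ the bundle $L^{\otimes q}\otimes B\otimes\mathcal O_X(-D)$ equals $\omega_X\otimes L^{\otimes(q-1)}\otimes A^{\otimes(d-p-1)}\otimes P^{\otimes(n-2)}\otimes(N\otimes B)$, whose interior is ample because $N\otimes B$ is nef; Kodaira vanishing then yields (1) for $q\geq 1$, and the same computation (without $B$) gives (3). The case $q=0$ of (1) is the delicate one: by Serre duality $H^1(X,B\otimes A^{\otimes-(p+1)}\otimes P^{-1})$ is dual to $H^{n-1}(X,\omega_X\otimes(P\otimes B^{\vee})\otimes A^{\otimes(p+1)})$, and here $(P\otimes B^{\vee})\otimes A^{\otimes(p+1)}$ is ample because $P\otimes B^{\vee}$ is nef, so it vanishes by Kodaira as $n\geq 2$. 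This $q=0$ vanishing is precisely what makes $H^0(X,B)\to H^0(D,B_D)$ surjective, which forces $\operatorname{ev}_\xi$ on $D$ to have the same proper image as on $X$; thus $B_D$ is not $p$-spanned on $D$.

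The remaining hypothesis (2), namely $K_{p-1,2}(X,B\otimes A^{\otimes-(p+1)}\otimes P^{-1},L)=0$, is the main obstacle and the source of the exact value of $d$. Rewriting it via the kernel bundle (Proposition \ref{koszulkernel}) as the vanishing of $H^1(X,\wedge^pM_L\otimes L\otimes B\otimes A^{\otimes-(p+1)}\otimes P^{-1})$, I would deduce it from the effective Koszul vanishing of Ein and Lazarsfeld \cite[Theorem 2]{ein_lazarsfeld_effective_vanishing}; the positivity fed into that result is supplied by $N\otimes B$ being nef, and the bound $d\geq(n-1)(p+1)+p+3$ is dictated exactly by this application. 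Granting (1)--(3), Lemma \ref{lemmaonedivisor} gives a surjection $K_{p,1}(X,B,L)\twoheadrightarrow K_{p,1}(D,B_D,L_D)$; since $B_D$ is not $p$-spanned and $L_D$ has the required form, the inductive hypothesis yields $K_{p,1}(D,B_D,L_D)\neq 0$, and therefore $K_{p,1}(X,B,L)\neq 0$.
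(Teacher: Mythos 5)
Your proposal is correct and follows essentially the same path as the paper's own proof: induction on dimension with base case Proposition \ref{effnonvanishingcurves}, the auxiliary divisor $D \in |P\otimes A^{\otimes(p+1)}|$ through the curvilinear scheme $\xi$ produced by Lemma \ref{auxiliarydivisor}, verification of the three hypotheses of Lemma \ref{lemmaonedivisor} via Kodaira vanishing, Serre duality and \cite[Theorem 2]{ein_lazarsfeld_effective_vanishing}, and adjunction to match the numerical bound and close the induction. The only point the paper makes explicit that you leave unaddressed is the preliminary check that $L$ is ample and globally generated (via Castelnuovo--Mumford $0$-regularity of $L\otimes A^{\vee}$), which is needed before Lemma \ref{lemmaonedivisor} and the Koszul-cohomology machinery can be invoked at all.
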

\begin{proof}
First we observe that any $L$ as in the statement of the theorem is very ample: indeed, Kodaira vanishing shows that $L\otimes A^{\vee}$ is $0$-regular w.r.t. $A$ in the sense of Castelnuovo-Mumford. In particular it is globally generated. Hence $L=(L\otimes A^{\vee})\otimes A$ is very ample.
  
Now we proceed to prove the theorem by induction on $n$. If $n=1$, set $g$ to be the genus of the curve $X$: then we see that $\deg L \geq 2g-2+d \geq 2g+p+1$, and the same holds for $\deg (L\otimes B)$. Hence, the conclusion follows from Proposition \ref{effnonvanishingcurves}.

Now, suppose that $n\geq 2$ and that the result is true for $n-1$. Fix a finite, curvilinear scheme $\xi \subseteq X$ of length $p+1$ such that the evaluation map
\begin{equation} 
\operatorname{ev}_{\xi}\colon H^0(X,B) \to H^0(X,B\otimes \mathcal{O}_{\xi})
\end{equation}
is not surjective. Consider the line bundle $H=P \otimes A^{\otimes (p+1)}$: since $P$ is globally generated and $A$ is very ample, $H$ is $(p+1)$-jet very ample (see \cite[Lemma 2.2]{beltrametti_sommese}).  Hence, Lemma \ref{auxiliarydivisor} shows that there is a smooth and irreducible divisor $D\in |H|$ such that $\xi\subseteq D$.

Now, let $L$ be as in the statement of the theorem: we claim that $L,B$ and $D$ satisfy the hypotheses of Lemma \ref{lemmaonedivisor}. Indeed, we see that
\begin{equation}
L\otimes \mathcal{O}_X(-D) \cong L \otimes H^{\vee}  \cong  \omega_X \otimes A^{\otimes (d-p-1)} \otimes P^{\otimes (n-2)} \otimes N
\end{equation}
and the assumption  on $d$ shows that $A^{\otimes (d-p-1)} \otimes P^{\otimes (n-2)} \otimes N$ is ample, so that $H^1(X,L\otimes \mathcal{O}_X(-D))=0$ by Kodaira vanishing. A similar reasoning shows that $H^1(X,L^{\otimes q} \otimes B \otimes  \mathcal{O}_X(-D))=0$ for all $q\geq 1$. To check that $H^1(X,B\otimes \mathcal{O}_X(-D))=0$, observe that $H^1(X,B\otimes \mathcal{O}_X(-D))^{\vee}\cong H^{n-1}(X,\omega_X\otimes B^{\vee} \otimes H)$ and $H\otimes B^{\vee} = P\otimes B^{\vee}\otimes A^{\otimes (p+1)}$ is clearly ample, so that we can use Kodaira vanishing again, together with the assumption $n\geq 2$.

 Finally, a result of Ein and Lazarsfeld \cite[Theorem 2]{ein_lazarsfeld_effective_vanishing} shows that  $K_{p-1,2}(X,B\otimes \mathcal{O}_X(-D),L)$ vanishes: indeed, we can write
\begin{equation}
 L \cong \omega_X \otimes A^{\otimes (n+p)} \otimes A^{\otimes (d-n-p)} \otimes  P^{\otimes(n-1)} \otimes N.   
\end{equation}
and since $d-n-p \geq (n-1)p+2$ we see that $A^{\otimes (d-n-p)} \otimes  P^{\otimes(n-1)} \otimes N$ is nef. Furthermore
\begin{equation}
A^{\otimes (d-n-p)} \otimes  P^{\otimes(n-1)} \otimes N \otimes B \otimes \mathcal{O}_X(-D) \cong A^{\otimes (d-n-2p-1)}\otimes P^{\otimes (n-2)}\otimes B\otimes N
\end{equation}
and since $d-n-2p-1\geq (n-2)p+1$, we see again that this is nef. Then the aforementioned \cite[Theorem 2]{ein_lazarsfeld_effective_vanishing} applies and, we get that $K_{p-1,2}(X,B\otimes \mathcal{O}_X(-D),L)=0$.

Now we can apply Lemma \ref{lemmaonedivisor} and we obtain that the two natural restriction maps
\begin{equation}
H^0(X,B) \to H^0(D,B_D), \qquad K_{p,1}(X,B,L) \to K_{p,1}(D,B_D,L_D)
\end{equation}
are surjective. In particular, since $\xi\subseteq D$, we see that $B_D$ is not $p$-spanned on $D$. Moreover, the adjunction formula shows that
\begin{equation}
L_D = K_D \otimes A_D^{\otimes (d-(p-1))}\otimes P_D^{\otimes (n-2)}\otimes N_D
\end{equation}
which clearly satisfies the induction hypothesis for $n-1$. Hence $K_{p,1}(D,B_D,L_D)\ne 0$, and since $K_{p,1}(X,B,L) \to K_{p,1}(D,B_D,L_D)$ is surjective, we are done.
\end{proof}

Now we can prove Theorem A.

\begin{proof}[Proof of Theorem A]
We start from the first part. Let $X$ be a projective scheme, and $B$ a line bundle on $X$. Fix also an ample line bundle $A$, another line bundle $P$ and set $L_{d} = P\otimes A^{\otimes d}$ for any integer $d>0$. Assume that $K_{p,1}(X,B,L_d)=0$ for $d\gg 0$. We want to show that $B$ is $p$-very ample. So, we assume that $B$ is not $p$-very ample and we claim that $K_{p,1}(X,B,L_d)\ne 0$ for infinitely many
$d$. 

To do this, let $\xi\subseteq X$ be a finite subscheme of length $\ell(\xi) = p+1$ such that the evaluation map
\begin{equation}
\operatorname{ev}_{\xi} \colon H^0(X,B) \to H^0(X,B\otimes \mathcal{O}_{\xi})
\end{equation}
is not surjective. Then it is enough to show that the hypotheses in Lemma \ref{basiclemma} are verified for infinitely many $d$. Hypotheses (1) and (3) hold for all $d\gg 0$ thanks to Serre vanishing. Lemma \ref{asymptoticvanishingsyzygieshighdegree} gives hypothesis (2) and we are done.

The second part of the theorem is exactly Theorem \ref{thmAsecondpart}. 
\end{proof}
 
\subsection{Asymptotic syzygies and measures of irrationality}

As an application of Theorem A we give a proof of Corollary C from the Introduction.
First we prove a related result, which extends part of \cite[Corollary C]{ein_lazarsfeld_yang}. We observe that we do not require the condition $H^i(X,\mathcal{O}_X)=0$ for $0<i<\dim X$, which is present in \cite[Corollary C]{ein_lazarsfeld_yang}.

\begin{cor}
Let $X$ be a smooth and irreducible projective variety of dimension $n$.
\begin{equation} 
\text{If} \quad  K_{h^0(X,L)-1-n-p,n}(X,\mathcal{O}_X,L) =0 \quad \text{ for } L\gg 0 \qquad \text{ then } \qquad \omega_X \text{ is } p \text{-very ample}. 
\end{equation}
\end{cor}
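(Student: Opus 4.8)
The plan is to deduce this corollary from the first part of Theorem A, using the partial duality statement of Proposition \ref{partialduality} as a bridge. The hypothesis concerns a syzygy group of $\mathcal{O}_X$ sitting near the top of the resolution, in homological degree $h^0(X,L)-1-n-p$ and weight $n$, whereas Theorem A is phrased in terms of the low-degree group $K_{p,1}$ of a line bundle. Proposition \ref{partialduality} relates exactly these two, and crucially it gives a one-sided dimension bound rather than a full isomorphism, so it applies without the extra vanishing needed for Green's Duality Theorem.

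Concretely, I would apply Proposition \ref{partialduality} with the vector bundle $E=\omega_X$ and weight $q=1$. Then $\omega_X\otimes E^{\vee}\cong \mathcal{O}_X$ and $n+1-q=n$, so the inequality becomes
\[
\dim K_{p,1}(X,\omega_X,L)\leq \dim K_{h^0(X,L)-1-n-p,n}(X,\mathcal{O}_X,L).
\]
By assumption the right-hand side vanishes for $L\gg 0$, which forces $K_{p,1}(X,\omega_X,L)=0$ for $L\gg 0$. The first part of Theorem A, applied to the line bundle $B=\omega_X$, then yields that $\omega_X$ is $p$-very ample, which is the desired conclusion.

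The only point requiring verification is that the cohomological hypotheses of Proposition \ref{partialduality} hold for $E=\omega_X$, $q=1$ and $L\gg 0$. For these parameters the hypotheses read $H^i(X,\omega_X\otimes L^{\otimes(-i)})=0$ for $i=1,\dots,n-1$ and $H^i(X,\omega_X\otimes L^{\otimes(1-i)})=0$ for $i=2,\dots,n-1$. By Serre duality these are dual to $H^{n-i}(X,L^{\otimes i})$ and $H^{n-i}(X,L^{\otimes(i-1)})$ respectively; in every case the exponent of $L$ is a strictly positive integer (namely $i\geq 1$ in the first family and $i-1\geq 1$ in the second) while the cohomological degree $n-i$ is also strictly positive, since $i\leq n-1$. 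Writing $L=P\otimes A^{\otimes d}$ with $A$ ample, Serre vanishing annihilates all of these finitely many groups once $d\gg 0$, so the hypotheses are in force for $L\gg 0$ and can be imposed simultaneously with the hypothesis of the corollary. I expect this routine Serre-vanishing check to be the only mildly technical step; the substance of the argument is simply the combination of the duality inequality with Theorem A.
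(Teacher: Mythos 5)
Your proposal is correct and is essentially identical to the paper's own proof: the paper also applies Proposition \ref{partialduality} with $E=\omega_X$ and $q=1$, verifies the hypotheses via Serre duality and the vanishing of $H^{n-i}(X,L^{\otimes i})$ and $H^{n-i}(X,L^{\otimes(i-1)})$ for $L\gg 0$, and then invokes the first part of Theorem A with $B=\omega_X$. Your explicit Serre-vanishing check of the hypotheses is the only detail the paper leaves implicit.
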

\begin{proof}
Since $L\gg 0$, we see that $H^{n-i}(X,L^{\otimes i})=0$ for all $i=1,\dots,n-1$ and $H^{n-i}(X,L^{\otimes (i-1)})=0$ for all $i=2,\dots,n-1$. Hence, using Serre's duality and Proposition \ref{partialduality}, we get
\begin{equation}
\dim K_{p,1}(X,\omega_X,L) \leq \dim K_{h^0(X,L)-1-n-p,n}(X,\mathcal{O}_X,L).
\end{equation}
Thus, $K_{h^0(X,L)-1-n-p,n}(X,\mathcal{O}_X,L)=0$ implies $K_{p,1}(X,\omega_X,L)=0$ as well, so that we conclude using Theorem A.	
\end{proof}

A similar proof, together with results from \cite{irrationality}, gives Corollary C. We actually give here a more precise version, which contains the effective result mentioned in the Introduction.

\begin{cor}\label{corgonality} 
Let $X$ be a smooth and irreducible projective variety of dimension $n$. Let $L$ be a line bundle of the form
\begin{equation}
L = \omega_{X}\otimes A^{\otimes d} \otimes P ^{\otimes (n-1)} \otimes N, \qquad d\geq (n-1)(p+1)+p+3,
\end{equation}
where $A$ is a very ample line bundle, $P$ a globally generated line bundle such that $P\otimes \omega_X^{\vee}$ is nef and $N$ a nef line bundle such that $N\otimes \omega_X$ is nef. If $K_{h^0(X,L)-1-n-p,n}(X,\mathcal{O}_X,L)=0$ then the covering gonality and the degree of irrationality of $X$ are at least $p+2$. 
\end{cor}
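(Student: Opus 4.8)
The plan is to upgrade the syzygy vanishing into the statement that $\omega_X$ is $p$-spanned, and then to convert this positivity of the canonical bundle into the two lower bounds by means of the Cayley--Bacharach results of \cite{irrationality}. This follows the same scheme as the preceding corollary; the differences are that $L$ is now an explicit bundle rather than a bundle with $L\gg 0$, so the cohomological vanishings must be verified by hand, and that a geometric consequence has to be extracted at the end.

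First I would reduce the hypothesis to the vanishing of $K_{p,1}(X,\omega_X,L)$. Note that $L$ is very ample, hence ample and globally generated, as already observed in the proof of Theorem \ref{thmAsecondpart}. Applying Proposition \ref{partialduality} with $E=\omega_X$ and $q=1$, and using $\omega_X\otimes E^{\vee}\cong\mathcal{O}_X$ together with $n+1-q=n$, gives
\[\dim K_{p,1}(X,\omega_X,L)\leq \dim K_{h^0(X,L)-1-n-p,\,n}(X,\mathcal{O}_X,L),\]
so that the hypothesis forces $K_{p,1}(X,\omega_X,L)=0$. What must be checked is that the two families of cohomological vanishings required by Proposition \ref{partialduality} hold for this specific $L$. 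By Serre duality they amount to $H^{n-i}(X,L^{\otimes i})=0$ for $1\leq i\leq n-1$ and to $H^{n-i}(X,L^{\otimes (i-1)})=0$ for $2\leq i\leq n-1$, and both follow from Kodaira vanishing once one checks that $L^{\otimes j}\otimes\omega_X^{\vee}$ is ample for every $j\geq 1$. Indeed
\[L^{\otimes j}\otimes\omega_X^{\vee}\cong A^{\otimes dj}\otimes(\omega_X\otimes N)^{\otimes (j-1)}\otimes N\otimes P^{\otimes (n-1)j},\]
which is ample, being the tensor product of the ample bundle $A^{\otimes dj}$ with a nef one (here we use that $\omega_X\otimes N$ and $N$ are nef and that $P$ is globally generated).

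Next I would apply the second part of Theorem A, in the form of Theorem \ref{thmAsecondpart}, with $B=\omega_X$. The hypotheses on $L$ match exactly: $A$ is very ample, $P$ is globally generated with $P\otimes\omega_X^{\vee}$ nef, $N$ is nef with $N\otimes\omega_X$ nef, and $d\geq (n-1)(p+1)+p+3$. Since $K_{p,1}(X,\omega_X,L)=0$, the contrapositive of Theorem \ref{thmAsecondpart} shows that $\omega_X$ is $p$-spanned.

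It remains to deduce the two bounds, and here I would argue by contradiction using \cite{irrationality}. Suppose $\operatorname{irr}(X)=\delta\leq p+1$ and pick a dominant rational map $f\colon X\dashrightarrow\mathbb{P}^n$ of degree $\delta$; a general fiber $Z$ consists of $\delta$ distinct reduced points which, by the results of \cite{irrationality}, satisfy the Cayley--Bacharach condition with respect to $|\omega_X|$. This condition prevents the evaluation map $H^0(X,\omega_X)\to H^0(X,\omega_X\otimes\mathcal{O}_Z)$ from being surjective. As $Z$ is reduced it is curvilinear, and enlarging it by $p+1-\delta$ general points produces a curvilinear subscheme of length exactly $p+1$ for which evaluation is still not surjective, contradicting the $p$-spannedness of $\omega_X$. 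The identical argument applied to a general fiber of a pencil realising the gonality of a general member of a covering family of curves---again a curvilinear scheme, being a divisor on a smooth curve---bounds the covering gonality from below by $p+2$. The main obstacle is precisely this last input from \cite{irrationality}, namely that the general fiber of a map computing either invariant satisfies Cayley--Bacharach with respect to the canonical series; granting it, the passage through $p$-spannedness is formal, and the fact that curvilinear schemes suffice is exactly what lets us rely on the weaker conclusion of Theorem \ref{thmAsecondpart} rather than on full $p$-very ampleness.
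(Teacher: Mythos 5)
Your proposal is correct, and its two main reductions are exactly those of the paper: Kodaira vanishing (via the ampleness of $L^{\otimes j}\otimes\omega_X^{\vee}$, whose verification you carry out correctly) feeds Proposition \ref{partialduality} with $E=\omega_X$, $q=1$, giving $K_{p,1}(X,\omega_X,L)=0$, and Theorem \ref{thmAsecondpart} applied with $B=\omega_X$ then shows that $\omega_X$ is $p$-spanned. The only divergence is in the final geometric step. The paper observes that $p$-spannedness implies that for every smooth curve $C$ and map $f\colon C\to X$ birational onto its image, the pullback $f^*\omega_X$ satisfies condition $(\mathrm{BVA})_p$ of \cite[Definition 1.1]{irrationality} (every finite subscheme of a smooth curve being curvilinear), invokes a straightforward variation of the proof of \cite[Theorem 1.10]{irrationality} to bound the covering gonality by $p+2$ from below, and then deduces the bound on the degree of irrationality from the general inequality between the two invariants. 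You instead unwind the Cayley--Bacharach mechanism of \cite{irrationality} explicitly: general fibers of the maps computing either invariant satisfy the Cayley--Bacharach condition with respect to $|\omega_X|$, this forces non-surjectivity of the evaluation map, and your enlargement by general points produces a reduced --- hence curvilinear --- subscheme of length exactly $p+1$ contradicting $p$-spannedness. This is a legitimate, equivalent route resting on the same external input, and it has the small advantage of treating the degree of irrationality directly rather than through the covering gonality. One caveat: for the covering gonality, the Cayley--Bacharach property of fibers of a gonality pencil on a general member of a covering family is, in \cite{irrationality}, established inside the proof of their Theorem 1.10 (via Mumford's trace argument applied to the family) rather than stated as a stand-alone lemma; so your citation is really to that proof --- which is precisely the part the paper defers to wholesale with its ``straightforward variation'' remark. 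With that understanding, both arguments hinge on the same observation, namely that the schemes produced are reduced, so that the curvilinear case covered by $p$-spannedness suffices.
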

\begin{proof}
For such a line bundle $L$, Kodaira Vanishing implies that $H^{n-i}(X,L^{\otimes i})=0$ for all $i=1,\dots,n-1$ and $H^{n-i}(X,L^{\otimes (i-1)})=0$ for all $i=2,\dots,n-1$. Hence, Serre's duality and Proposition \ref{partialduality} give
\begin{equation}
\dim K_{p,1}(X,\omega_X,L) \leq \dim K_{h^0(X,L)-1-n-p,n}(X,\mathcal{O}_X,L).
\end{equation}
Thus, $K_{h^0(X,L)-1-n-p,n}(X,\mathcal{O}_X,L)=0$ implies $K_{p,1}(X,\omega_X,L)=0$ as well. Therefore, Theorem A shows that $\omega_X$ is $p$-spanned. Consider now a smooth curve $C$ and a map $f\colon C\to X$ which is birational onto its image. Then it is immediate from the definition that the line bundle $f^*\omega_X$ is birationally $p$-very ample on $C$, according to the definition of Bastianelli et al. \cite[Definition 1.1]{irrationality}. With this, a straightforward variation in the proof of \cite[Theorem 1.10]{irrationality} gives that the covering gonality of $X$ is at least $p+2$. Since the covering gonality is always smaller than the degree of irrationality \cite[(3.1) page 13]{irrationality}, this concludes the proof. 
\end{proof}	

Now we turn to the case of surfaces, with the aim of proving Theorem B. We start by recalling some facts about the Hilbert scheme of points on smooth surfaces.

\section{Background on the Hilbert scheme of points on a smooth surface}

We will collect here some results about the Hilbert scheme of points for quasiprojective surfaces. 
Let $X$ be a smooth, irreducible,  quasiprojective surface and $n>0$ a positive integer: we will denote by $X^{[n]}$ the Hilbert scheme of points of $X$ and by $X^{(n)}$ the symmetric product of $X$. The Hilbert scheme $X^{[n]}$ parametrizes finite subschemes $\xi\subseteq X$ of length $n$, whereas $X^{(n)}$ parametrizes zero cycles of length $n$ on $X$. If $X$ is projective, both $X^{[n]}$ and $X^{(n)}$ are projective as well.

The symmetric product can be obtained as the quotient $X^{(n)} = X^n/\mathfrak{S}_n$, where $\mathfrak{S}_n$ acts naturally on $X^n$. 
%by
%\begin{equation}\label{actionSn}  
%\sigma \cdot (P_1,\dots,P_n) = (P_{\sigma^{-1}(1)},\dots,P_{\sigma^{-1}(n)}) 
%\end{equation}
We denote by
\begin{equation}\label{projection} 
\pi\colon X^n \to X^{(n)} 
\end{equation}
the projection.
 There is also a canonical Hilbert-Chow morphism
\begin{equation}\label{hilbchow} 
\mu\colon X^{[n]} \to X^{(n)} \qquad \xi \mapsto \sum_{P\in X} \ell(\mathcal{O}_{\xi,P}) \cdot P 
\end{equation}
that maps a subscheme to its weighted support. By construction, the Hilbert scheme comes equipped with a universal family $\Xi^{[n]}$, that can be described as
\begin{equation}\label{univfamily} 
\Xi^{[n]} = \{ (P,\xi) \in X\times X^{[n]} \,|\, P \in \xi  \}, \qquad p_X\colon \Xi^{[n]} \to X, \qquad p_{X^{[n]}}\colon \Xi^{[n]} \to X^{[n]} 
\end{equation}
with the map $p_{X^{[n]}}$ being finite, flat and of degree $n$: the fiber of $p_{X^{[n]}}$ over $\xi\in X^{[n]}$ is precisely the subscheme $\xi\subseteq X$.

The same construction can be carried out for every quasiprojective scheme, however, when $X$ is an irreducible smooth surface, Fogarty \cite{fogarty} proved that 
 $X^{[n]}$ is a smooth and irreducible variety of dimension $2n$. Moreover the symmetric product $X^{(n)} $ is irreducible, Gorenstein, with rational singularities and the Hilbert-Chow morphism $\mu\colon X^{[n]} \to X^{(n)}$ is a crepant resolution of singularities, so that $\mu^*\omega_{X^{(n)}} \cong \omega_{X^{[n]}}$.

\begin{rmk}\label{rmkcurves}
For smooth curves, the Hilbert scheme is also smooth and irreducible. Moreover the Hilbert-Chow morphism is an isomorphism.  
\end{rmk}

We will need later an estimate on the amount of curvilinear subschemes:

\begin{rmk}\label{sizecurvilinear}
Recall that a subscheme $\xi\in X^{[n]}$ is said to be curvilinear if $\dim T_{P}\xi \leq 1$ for all $P\in X$. The set $U_n\subseteq X^{[n]}$ of curvilinear subschemes is open and dense, and its complement has codimension 4 \cite[Remark 3.5]{beltrametti_francia_sommese}.
\end{rmk}

\subsection{Tautological bundles}

If $L$ is any line bundle on $X$, the line bundle $L^{\boxtimes n} = \bigotimes_{i=1}^n pr_i^*L$ has a $\mathfrak{S}_n$-linearization. Hence, we can take the sheaf of invariants $L^{(n)} :\df \pi_*^{\mathfrak{S}_n}(L^{\boxtimes n})$ which is a coherent sheaf on $X^{(n)}$. In fact, it was proven by Fogarty \cite{fogarty_2} that $L^{(n)}$ is a line bundle on $X^{(n)}$ such that $\pi^* L^{(n)} \cong L^{\boxtimes n}$ and that the induced map
\begin{equation}\label{homomophism} 
\operatorname{Pic}(X) \to \operatorname{Pic}(X^{(n)}) \qquad L \mapsto L^{(n)} 
\end{equation}
is an homomorphism of groups. This gives a line bundle on $X^{[n]}$ by taking $\mu^*L^{(n)}$. 

Since the map $\pi\colon X^n \to X^{(n)}$ is finite, we get the following well known result:

\begin{lemma}\label{ampleness}
If $X$ is projective and $A$ is an ample bundle on $X$, then $A^{(n)}$ is ample on $X^{(n)}$. In particular, if $L \gg 0$ on $X$, then $L^{(n)}\gg 0$ on $X^{(n)}$. 
\end{lemma}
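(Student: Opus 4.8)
The plan is to reduce ampleness on the quotient $X^{(n)}$ to ampleness on the product $X^n$ via the finite projection $\pi$. First I would invoke Fogarty's identity $\pi^* A^{(n)} \cong A^{\boxtimes n}$ recalled above, which transfers the assertion about $A^{(n)}$ on $X^{(n)}$ into an assertion about the pullback $A^{\boxtimes n}$ on $X^n$.

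Next I would observe that $A^{\boxtimes n} = \bigotimes_{i=1}^n pr_i^* A$ is ample on $X^n$. This is the classical fact that the external tensor product of ample line bundles on a product of projective varieties is again ample: each factor $pr_i^* A$ is the pullback of an ample bundle, and a suitable positive multiple of $A^{\boxtimes n}$ embeds $X^n$ into a product of projective spaces, so it is ample.

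The key step is then to descend ampleness along the finite surjective morphism $\pi\colon X^n \to X^{(n)}$. Here I would use the standard criterion that a line bundle $M$ on a projective scheme $Z$ is ample precisely when $f^* M$ is ample for some finite surjective morphism $f\colon Y \to Z$ (see e.g. Hartshorne, Exercise III.5.7). Applying this with $f = \pi$ and $M = A^{(n)}$, the ampleness of $\pi^* A^{(n)} \cong A^{\boxtimes n}$ established in the previous step yields that $A^{(n)}$ is ample on $X^{(n)}$.

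Finally, for the \emph{in particular} statement, I would write a line bundle $L \gg 0$ in the form $L = P \otimes A^{\otimes d}$ with $A$ ample and $d \gg 0$, and invoke the homomorphism property of $L \mapsto L^{(n)}$ recorded in (\ref{homomophism}) to obtain $L^{(n)} \cong P^{(n)} \otimes (A^{(n)})^{\otimes d}$. Since $A^{(n)}$ is ample by the first part and $P^{(n)}$ is an arbitrary line bundle on $X^{(n)}$, this exhibits $L^{(n)} \gg 0$ by definition. I expect no genuine obstacle in this argument: the only nonformal inputs are the ampleness of external products on a product and the descent of ampleness along finite surjective morphisms, both of which are classical.
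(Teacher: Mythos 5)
Your proof is correct and is essentially the argument the paper has in mind: the paper states the lemma without proof as a ``well known result'' following from the finiteness of $\pi\colon X^n \to X^{(n)}$, which is exactly your combination of Fogarty's identity $\pi^*A^{(n)} \cong A^{\boxtimes n}$, ampleness of the external product, and descent of ampleness along the finite surjective morphism $\pi$. The treatment of the ``in particular'' clause via the homomorphism property $L^{(n)} \cong P^{(n)} \otimes (A^{(n)})^{\otimes d}$ is also the intended one.
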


Another construction of bundles on the Hilbert scheme is the following: let $E$ be a vector bundle on $X$ of rank $r$. Then we can define the \textit{tautological bundle} associated to $E$ on $X^{[n]}$ as
\begin{equation}\label{tautological} 
E^{[n]}:\df p_{X^{[n]},*} p_X^* (E). 
\end{equation}
Since the map $p_{X^{[n]}}\colon \Xi^{[n]} \to X^{[n]}$ is finite and flat of degree $n$, the sheaf $E^{[n]}$ is a vector bundle of rank $n\cdot r$ on $X^{[n]}$. By construction, the fiber of $E^{[n]}$ over a point $\xi\in X^{[n]}$ is identified with $H^0(X,E\otimes \mathcal{O}_{\xi})$.

We can also define a line bundle on $X^{[n]}$ by
\begin{equation}\label{delta} 
\mathcal{O}(-\delta_n) :\df \det \mathcal{O}_X^{[n]}. 
\end{equation}
A geometrical interpretation of this line bundle is that the class $2\delta_n$ represents the locus of non-reduced subschemes in $X^{[n]}$, which is the exceptional divisor of the Hilbert-Chow morphism $\mu\colon X^{[n]} \to X^{(n)}$. 

The determinant of a tautological bundle is well-known:
\begin{equation}\label{dettaut}
\det L^{[n]} \cong \mu^*L^{(n)}\otimes \mathcal{O}(-\delta_n).
\end{equation}

\subsection{Derived McKay correspondence for the Hilbert scheme of points}\label{section_mckay}

For a more extensive exposition on this section, we refer to \cite{krug_mckay}.

Using the theory of Bridgeland-King-Reid \cite{BKR}, Haiman obtained in \cite{haiman_1},\cite{haiman_2} a fundamental description of the derived category $D^b(X^{[n]})$ in terms of $\mathfrak{S}_n$-linearized coherent sheaves on $X^n$. More precisely, denote by $D^b_{\mathfrak{S}_n}(X^n)$ the derived category of $\mathfrak{S}_n$-linearized coherent sheaves on $X^n$. Then Haiman's result is the following:

\begin{thm}[Haiman]\label{mckay}
There are explicit equivalences of derived categories
\begin{equation}
  \Phi\colon D^b(X^{[n]}) \to D^b_{\mathfrak{S}_n}(X^n), \qquad \Psi\colon D^b_{\mathfrak{S}_n}(X^n) \to D^b(X^{[n]}).
\end{equation}
\end{thm}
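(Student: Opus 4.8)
The plan is to realize the statement as an instance of the general Bridgeland--King--Reid derived equivalence \cite{BKR} for the Hilbert scheme of $G$-clusters, applied to the group $G=\mathfrak{S}_n$ acting by permutations on $M=X^n$. The quotient $M/G$ is precisely the symmetric product $X^{(n)}$, and the first and decisive geometric input is Haiman's identification of the Hilbert scheme of points with the equivariant Hilbert scheme: one shows that $X^{[n]}$ is the fine moduli space $\mathfrak{S}_n\text{-}\mathrm{Hilb}(X^n)$ of $\mathfrak{S}_n$-clusters on $X^n$, in such a way that the Hilbert--Chow morphism $\mu$ becomes the tautological morphism $\mathfrak{S}_n\text{-}\mathrm{Hilb}(X^n)\to X^{(n)}$.

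Granting this identification, I would introduce the \emph{isospectral Hilbert scheme}, namely the reduced fibre product
\[
X_n \;=\; \bigl(X^{[n]}\times_{X^{(n)}} X^n\bigr)_{\mathrm{red}},
\]
together with its two projections $\rho\colon X_n\to X^{[n]}$ and $\sigma\colon X_n\to X^n$. The scheme $X_n$ carries a natural $\mathfrak{S}_n$-action for which $\sigma$ is equivariant, with $\mathfrak{S}_n$ acting trivially on $X^{[n]}$, so that its structure sheaf $\mathcal{O}_{X_n}$ is $\mathfrak{S}_n$-linearized along the $X^n$-factor. One then defines the Fourier--Mukai functor with kernel $\mathcal{O}_{X_n}$,
\[
\Phi(-)\;=\;R\sigma_*\,\rho^*(-)\;\colon\; D^b(X^{[n]})\longrightarrow D^b_{\mathfrak{S}_n}(X^n),
\]
the target acquiring its equivariant structure from the linearization. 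The quasi-inverse $\Psi$ is constructed as the adjoint Fourier--Mukai transform along the same kernel, taking $\mathfrak{S}_n$-invariants on the $X^n$-side and twisting by the relative dualizing sheaf, so that $\Psi\cong\Phi^{-1}$ once $\Phi$ is shown to be an equivalence.

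The formal heart of the argument is then the BKR criterion: $\Phi$ is an equivalence of triangulated categories as soon as the fibre dimensions of $\mu$ are suitably controlled, concretely once
\[
\dim\bigl(X^{[n]}\times_{X^{(n)}} X^{[n]}\bigr)\;\leq\; \dim X^{[n]}+1\;=\;2n+1.
\]
I would verify this by stratifying $X^{(n)}$ according to partition type and bounding the dimensions of the punctual Hilbert scheme fibres of $\mu$ via Brian\c{c}on's estimate; the crepancy of $\mu$, recorded above in the form $\mu^*\omega_{X^{(n)}}\cong\omega_{X^{[n]}}$, is exactly what supplies the Gorenstein and Calabi--Yau-type hypotheses of \cite{BKR} in the equivariant setting.

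The main obstacle is the identification $X^{[n]}\cong\mathfrak{S}_n\text{-}\mathrm{Hilb}(X^n)$ together with the flatness and Cohen--Macaulayness of $X_n$ over $X^{[n]}$. This is Haiman's deepest contribution, resting on the $n!$-theorem and the polygraph machinery, and it is genuinely special to the case $\dim X=2$: it is precisely the property that fails in higher dimension, where $X^{[n]}$ ceases to be a crepant resolution and the clean statement breaks down. Once that realization and the dimension bound above are in hand, the existence of the mutually inverse equivalences $\Phi$ and $\Psi$ follows formally from the Bridgeland--King--Reid theorem.
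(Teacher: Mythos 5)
Your proposal correctly reconstructs the proof that the paper itself does not spell out: Theorem \ref{mckay} is quoted as background, with the argument living in \cite{BKR} and \cite{haiman_1}, \cite{haiman_2}, and your outline---Haiman's identification $X^{[n]}\cong \mathfrak{S}_n\text{-}\mathrm{Hilb}(X^n)$ resting on the polygraph/$n!$ theorem (equivalently, flatness and Cohen--Macaulayness of the isospectral Hilbert scheme $X_n$ over $X^{[n]}$), the Fourier--Mukai functor with kernel $\mathcal{O}_{X_n}$, and the fibre-product dimension bound verified by stratifying $X^{(n)}$ by partition type together with Brian\c{c}on's estimate---is exactly that argument. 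One minor correction that does not affect the proof: in \cite{BKR} the crepancy $\mu^*\omega_{X^{(n)}}\cong\omega_{X^{[n]}}$ is a \emph{conclusion}, not a hypothesis; what must actually be checked is that $\omega_{X^n}$ is locally trivial as an $\mathfrak{S}_n$-sheaf, which holds because every stabilizer element permutes pairs of surface coordinates and hence acts with determinant $+1$ on the tangent spaces of $X^n$.
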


An important part of this result is that the equivalences $\Phi$ and $\Psi$ are explicitly computable. In particular Scala \cite{scala} was able to compute the image under $\Phi$ of the tautological bundles $E^{[n]}$. More precisely, consider the space $X\times X^n = \{(P_0,\dots,P_n)\}$ with the two projections
\begin{equation}\label{Xn+1} 
pr_0 \colon X\times X^n \to X, \quad (x_0,\dots,x_n)\mapsto x_0,  \qquad pr_{[1,n]}\colon X\times X^n \to X^n, \quad (x_0,\dots,x_n) \mapsto (x_1,\dots,x_n) 
\end{equation}
and the subscheme
\begin{equation}\label{D}
D_n \subseteq X\times X^n, \qquad D_n = \Delta_{01}\cup \Delta_{02} \cup \dots \cup \Delta_{0n} 
\end{equation}
where $\Delta_{ij}$ denotes the partial diagonal $\Delta_{ij}=\{(x_0,\dots,x_n) \,|\, x_i=x_j\}$. Scala showed the following in \cite[Theorem 2.2.2]{scala}:

\begin{thm}[Scala]\label{scala}
  Let $E$ be a vector bundle on $X$ and let $E^{[n]}$ be the corresponding tautological bundle on $X^{[n]}$. Then $\Phi(E^{[n]})\cong Rpr_{[1,n],*} (pr_0^* E \otimes \mathcal{O}_{D_n})$. Moreover, $\Phi(E^{[n]})$ is concentrated in degree zero, and there is a quasi-isomorphism in $D^b_{\mathfrak{S}_n}(X^n)$
  \begin{equation}
    \Phi(E^{[n]}) \cong 0 \to \mathsf{C}^0_E \to \mathsf{C}^1_E \to \dots \to \mathsf{C}^n_E \to 0
  \end{equation}
for a certain explicit complex $\mathsf{C}^{\bullet}_{E}$.
\end{thm}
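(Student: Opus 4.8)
The plan is to realize $\Phi$ as the Bridgeland--King--Reid--Haiman Fourier--Mukai transform and then reduce the computation to the combinatorics of the diagonals $D_n$. Recall from \cite{BKR,haiman_2} that $\Phi$ is induced by the structure sheaf of the isospectral Hilbert scheme $I^n=X^{[n]}\times_{X^{(n)}}X^n$ (the reduced fibre product), sitting inside $X^{[n]}\times X^n$ with the two projections $p\colon I^n\to X^{[n]}$ and $q\colon I^n\to X^n$, so that $\Phi(\mathcal{G})\cong Rq_*\,Lp^*\mathcal{G}$ together with its natural $\mathfrak{S}_n$-linearization. The essential external input is Haiman's theorem that $I^n$ is normal, Cohen--Macaulay and Gorenstein; this is what guarantees that the fibre products appearing below are Tor-independent and that no spurious derived terms arise.

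First I would substitute the definition $E^{[n]}=p_{X^{[n]},*}\,p_X^*E$ into $\Phi(E^{[n]})=Rq_*\,Lp^*E^{[n]}$. Since $p_{X^{[n]}}\colon \Xi^{[n]}\to X^{[n]}$ is finite and flat, flat base change along $p$ lets me commute $Lp^*$ past $p_{X^{[n]},*}$: forming the fibre product $Z=I^n\times_{X^{[n]}}\Xi^{[n]}$ and writing $\tilde p\colon Z\to I^n$ for the (again finite flat) projection, I get $Lp^*E^{[n]}\cong \tilde p_*\,\sigma^*E$, where $\sigma\colon Z\to\Xi^{[n]}\xrightarrow{p_X}X$. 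Setting $\rho=q\circ\tilde p\colon Z\to X^n$, the projection formula collapses the two pushforwards into $\Phi(E^{[n]})\cong R\rho_*\,\sigma^*E$.

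The geometric heart is to identify this with the right-hand side. A point of $Z$ is a triple $(\xi;x_1,\dots,x_n;P)$ with $\sum_i x_i=\mu(\xi)$ and $P\in\xi$, and the map $(\sigma,\rho)\colon Z\to X\times X^n$, $(\xi;x_1,\dots,x_n;P)\mapsto(P;x_1,\dots,x_n)$, lands in $D_n=\bigcup_i\Delta_{0i}$ because $P$ must coincide with one of the $x_i$. Using Haiman's Cohen--Macaulayness of $I^n$ to control the structure sheaf of $Z$, I would establish $R(\sigma,\rho)_*\mathcal{O}_Z\cong\mathcal{O}_{D_n}$; since $\sigma^*E=(\sigma,\rho)^*pr_0^*E$, the projection formula and $\rho=pr_{[1,n]}\circ(\sigma,\rho)$ then give
\[
\Phi(E^{[n]})\cong R\rho_*\,\sigma^*E\cong Rpr_{[1,n],*}\bigl(pr_0^*E\otimes\mathcal{O}_{D_n}\bigr).
\]
Concentration in degree zero is now immediate: each component $\Delta_{0i}$ maps isomorphically onto $X^n$ under $pr_{[1,n]}$, so $pr_{[1,n]}|_{D_n}\colon D_n\to X^n$ is finite, whence $Rpr_{[1,n],*}$ is exact on sheaves supported on $D_n$ and the transform is a genuine sheaf placed in degree $0$.

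Finally, to produce the explicit complex $\mathsf{C}^\bullet_E$ I would resolve the right-hand side by resolving $\mathcal{O}_{D_n}$ on $X\times X^n$. Writing $D_n$ as the union of the $n$ codimension-two subvarieties $\Delta_{0i}$, an inclusion--exclusion over the subsets of $\{1,\dots,n\}$ (equivalently a Mayer--Vietoris/Koszul-type complex built from the ideals of the $\Delta_{0i}$ and of their nested intersections) expresses $\mathcal{O}_{D_n}$ through the structure sheaves of the partial diagonals; applying $pr_{[1,n],*}$ and tensoring by $pr_0^*E$ converts these into the terms $\mathsf{C}^i_E$. The main obstacle is twofold: first, justifying the Tor-independence and the identification $R(\sigma,\rho)_*\mathcal{O}_Z\cong\mathcal{O}_{D_n}$, which genuinely relies on Haiman's deep results on $I^n$ rather than on any formal manipulation; and second, the diagonals $\Delta_{0i}$ do not meet transversally (their intersections are the higher diagonals, with singular and in general non-reduced scheme structure), so assembling the resolution of $\mathcal{O}_{D_n}$ requires a careful analysis of these intersection schemes rather than a naive Koszul complex.
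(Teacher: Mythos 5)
This statement is not proved in the paper at all: it is quoted as background, with the proof outsourced to Scala \cite[Theorem 2.2.2]{scala} (in the form recalled by Krug \cite{krug_mckay}). So the benchmark is Scala's original argument, and your outline does reconstruct its architecture faithfully: realize $\Phi$ as the Fourier--Mukai transform along the isospectral Hilbert scheme $I^n$, use flat base change along the finite flat family $\Xi^{[n]}\to X^{[n]}$ to rewrite $\Phi(E^{[n]})\cong R\rho_*\sigma^*E$ on $Z=I^n\times_{X^{[n]}}\Xi^{[n]}$, reduce by the projection formula to the single statement $R(\sigma,\rho)_*\mathcal{O}_Z\cong\mathcal{O}_{D_n}$, and deduce concentration in degree zero from finiteness of $pr_{[1,n]}|_{D_n}$. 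All of these formal steps are correct (note that the Tor-independence you worry about is automatic here, since $\Xi^{[n]}\to X^{[n]}$ is flat; it has nothing to do with properties of $I^n$).

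The one genuine gap is precisely at the step you say you ``would establish'': $R(\sigma,\rho)_*\mathcal{O}_Z\cong\mathcal{O}_{D_n}$ is not a consequence of normality, Cohen--Macaulayness or Gorensteinness of $I^n$, and no argument of rational-singularities or finiteness type can give it. Indeed, over a point of the small diagonal the fibre of $Z\to D_n$ is a punctual Hilbert scheme, hence of positive dimension $n-1$, and the target $D_n$ is reducible and non-normal, so both $R^{>0}(\sigma,\rho)_*\mathcal{O}_Z=0$ and $(\sigma,\rho)_*\mathcal{O}_Z=\mathcal{O}_{D_n}$ are substantive vanishing theorems. What is actually invoked here is Haiman's polygraph machinery: $D_n$ is the polygraph $Z(n,1)$, and the needed identification is the $k=1$ case of $\Phi\bigl((\mathcal{O}_X^{[n]})^{\otimes k}\bigr)\cong R(n,k)$, i.e.\ \cite[Theorem 2.1, Proposition 5.3]{haiman_2} --- exactly the statement this paper records in its concluding remarks. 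The roles of the $n!$ theorem (CM-ness of $I^n$, equivalently flatness over $X^{[n]}$) is only to make the Bridgeland--King--Reid equivalence available in the first place. A second, smaller correction: your stated obstacle for constructing $\mathsf{C}^\bullet_E$ is misdiagnosed. The intersections $\bigcap_{i\in I}\Delta_{0i}=\Delta_{\{0\}\cup I}$ are smooth, reduced partial diagonals meeting in the expected codimension, not singular or non-reduced; the real work is that exactness of the resulting Mayer--Vietoris-type complex resolving $pr_0^*E\otimes\mathcal{O}_{D_n}$ is not automatic for a subspace arrangement and must be proved, and that the $\mathfrak{S}_n$-linearization forces the sign twists by $\mathfrak{a}_I$ appearing in Scala's terms $\mathsf{C}^i_E$.
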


\begin{rmk}\label{rmkCi}
In particular, the first term of the complex $\mathsf{C}^{\bullet}_E$ is
\begin{equation}\label{C0}
 \mathsf{C}^0_E :\df \bigoplus_{i=1}^n pr_i^* E.
\end{equation}
For the other terms, we are not going to give an explicit description, since we will not use it later. However we will need the following key property proven by Krug in \cite[Proof of Lemma 3.3]{krug}.
\end{rmk}

\begin{thm}[Krug]\label{krugextensions}
Let $E$ be a vector bundle on $X$. Then for all $i\geq 0$ we have  
\begin{equation}
 \mathcal{E}xt^j_{X^n}(\mathsf{C}^i_E,\mathcal{O}_{X^n}) = 0, \qquad \text{ for } j\ne 2i.
\end{equation}
\end{thm}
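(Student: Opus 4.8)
The plan is to prove Theorem \ref{krugextensions} (Krug's result on the vanishing of sheaf Ext of the complex terms $\mathsf{C}^i_E$) by reducing the computation to the geometry of the subscheme $D_n \subseteq X \times X^n$ and its partial diagonals. The key structural fact is that each term $\mathsf{C}^i_E$ is, up to a twist by a pullback of $E$ from the factor $X$, built out of structure sheaves of intersections of the partial diagonals $\Delta_{0j}$. Since $\mathcal{E}xt$ of a pullback of a locally free sheaf only twists the answer and does not affect the vanishing pattern, the heart of the matter is to understand $\mathcal{E}xt^{\bullet}_{X^n}(\mathcal{O}_Z, \mathcal{O}_{X^n})$ where $Z$ is the relevant locally complete intersection supported on unions and intersections of diagonals.

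First I would recall, from Scala's explicit description in Theorem \ref{scala}, the precise form of the terms $\mathsf{C}^i_E$. On the total space $X \times X^n$ the subscheme $D_n = \Delta_{01} \cup \cdots \cup \Delta_{0n}$ is a union of $n$ smooth codimension-two subvarieties (each $\Delta_{0j}$ is a copy of the diagonal in the $(0,j)$ factors, hence smooth of codimension $2$ since $X$ is a surface). The structure sheaf $\mathcal{O}_{D_n}$ has a resolution whose associated graded pieces involve the structure sheaves of the partial intersections $\Delta_{0j_1} \cap \cdots \cap \Delta_{0j_i}$; each such $i$-fold intersection is again smooth, of codimension exactly $2i$ in $X \times X^n$, because the conditions $x_0 = x_{j_1} = \cdots = x_{j_i}$ are transverse and cut out a smooth subvariety isomorphic to a product of $X$'s. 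After pushing forward along $pr_{[1,n]}$, Scala's complex $\mathsf{C}^{\bullet}_E$ inherits this stratification, so that $\mathsf{C}^i_E$ is (a direct sum of pieces) supported on these codimension-$2i$ smooth loci in $X^n$, twisted by pullbacks of $E$.

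The core computation is then the standard fact about regular embeddings: if $j\colon Z \hookrightarrow M$ is a regular closed embedding of codimension $c$ with $Z$ smooth and $\mathcal{F}$ a locally free sheaf on $Z$, then $\mathcal{E}xt^k_M(j_*\mathcal{F}, \mathcal{O}_M) = 0$ unless $k = c$, and in that top degree it equals $j_*(\mathcal{F}^{\vee} \otimes \det N_{Z/M})$. I would apply this with $c = 2i$ to each stratum entering $\mathsf{C}^i_E$, which forces $\mathcal{E}xt^j_{X^n}(\mathsf{C}^i_E, \mathcal{O}_{X^n}) = 0$ for all $j \neq 2i$, as claimed. The twist by $pr_0^* E$ restricted to the diagonals, being locally free, does not disturb this concentration in a single degree. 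One should check that the differentials in $\mathsf{C}^{\bullet}_E$ do not mix strata of different codimension within a single term $\mathsf{C}^i_E$; this is guaranteed by Scala's construction, where $\mathsf{C}^i_E$ is genuinely supported on the pure-codimension-$2i$ locus rather than being an extension across several codimensions.

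The main obstacle will be verifying that each term $\mathsf{C}^i_E$ really is (pushed forward from) a locally free sheaf on a smooth subvariety of the expected codimension $2i$, rather than something with embedded or lower-dimensional components that would contribute $\mathcal{E}xt$ in other degrees. This requires invoking the transversality of the partial diagonals $\Delta_{0j}$ in $X \times X^n$ and Scala's identification of the higher terms of the complex with the structure sheaves of these transverse intersections, suitably twisted. Once the regularity and codimension of each stratum is established, the conclusion is immediate from the local-to-global behavior of $\mathcal{E}xt$ for regular embeddings, and the result follows exactly as in \cite[Proof of Lemma 3.3]{krug}.
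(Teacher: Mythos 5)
Your proposal is correct and follows essentially the same route as the paper's source: the paper gives no proof of this statement but cites \cite[Proof of Lemma 3.3]{krug}, and Krug's argument is precisely yours --- by Scala's description, $\mathsf{C}^i_E$ is a direct sum of sheaves $\iota_{\Delta_I,*}\bigl(pr_{\min I}^*E|_{\Delta_I}\bigr)\otimes\mathfrak{a}_I$ over subsets $I\subseteq\{1,\dots,n\}$ with $|I|=i+1$, each partial diagonal $\Delta_I\subseteq X^n$ being smooth of codimension $2i$, so the standard concentration of $\mathcal{E}xt$ for a regularly embedded smooth subvariety gives the vanishing for $j\ne 2i$. The only slip is cosmetic: the loci supporting $\mathsf{C}^i_E$ are the images in $X^n$ of the $(i+1)$-fold (not $i$-fold) intersections of the $\Delta_{0j}$, which have codimension $2(i+1)$ in $X\times X^n$ but codimension $2i$ in $X^n$, since $pr_{[1,n]}$ restricts to an isomorphism onto the partial diagonal; your final codimension count for the supports is nevertheless the right one.
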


\subsection{Higher order embeddings via Hilbert schemes}

We can phrase the concept of $p$-very ampleness from the Introduction in terms of tautological bundles. Let $B$ be a line bundle on $X$ and consider the evaluation map $H^0(X,B)\otimes \mathcal{O}_X\to B$. Pulling back the map to $\Xi^{[p+1]}$ and pushing forward to $X^{[p+1]}$, we obtain another evaluation map
\begin{equation}\label{evmap}
\operatorname{ev}_{B}\colon H^0(X,B)\otimes_{\mathbb{C}} \mathcal{O}_{X^{[p+1]}} \to B^{[p+1]}.
\end{equation}
It can be seen that the fiber of the map over each point $\xi\in X^{[p+1]}$ is precisely the map $\operatorname{ev}_{\xi}$ of (\ref{evmapX}), so that $B$ is $p$-very ample on $X$ if and only if the evaluation map (\ref{evmap}) is surjective. Moreover, $B$ is $p$-spanned if and only if the map (\ref{evmap}) is surjective when restricted to the open subset of curvilinear subschemes $U_{p+1}\subseteq X^{[p+1]}$.

There is also  a connection between tautological bundles and jet very ampleness for surfaces, which is stated already in \cite{ein_lazarsfeld_yang} in a different language. Let $B$ be a line bundle on $X$: in \cite[Lemma 1.5]{ein_lazarsfeld_yang}, the authors construct a coherent sheaf $\mathcal{E}_{p+1,B}$ on $X^{p+1}$ such that the fiber over a point $(x_1,\dots,x_{p+1}) \in X^{p+1}$ is given by
\begin{equation}\label{fiberEB} 
{\mathcal{E}_{p+1,B}}_{|(x_1,\dots,x_{p+1})} = H^0(X,B \otimes (\mathcal{O}_X/\mathfrak{m}_\zeta)), \qquad \zeta = x_1 + \dots +x_{p+1}. 
\end{equation}
Moreover, they construct an evaluation map 
\begin{equation}\label{evEB}  
H^0(X,B) \otimes \mathcal{O}_{X^{p+1}} \to \mathcal{E}_{p+1,B} 
\end{equation}
which on fibers coincides with (\ref{jetcondition}), so that $B$ is $p$-jet very ample if and only if this map of sheaves is surjective. Looking at the construction of \cite{ein_lazarsfeld_yang}, one actually sees that $\mathcal{E}_{p+1,B}$ is obtained as $\mathcal{E}_{p+1,B} \cong pr_{[1,p+1],*}(pr_0^*B \otimes \mathcal{O}_{D_{p+1}})$, where we are using the notation of (\ref{Xn+1}) and (\ref{D}). And this is precisely what appears in Scala's Theorem \ref{scala}, that we can then rephrase as follows.

\begin{cor}\label{scala2}
Let $B$ be a line bundle on $X$ and $p\geq 0$ an integer. Then $\Phi(B^{[p+1]}) \cong \mathcal{E}_{p+1,B}$ in $D^b_{\mathfrak{S}_n}(X^{p+1})$ and the evaluation map (\ref{evEB}) corresponds to the map
\begin{equation} 
H^0(X,B) \otimes_{\mathbb{C}} \Phi(\mathcal{O}_{X^{[p+1]}}) \to \Phi(B^{[p+1]}) 
\end{equation}
that we obtain applying the functor $\Phi$ to the evaluation map (\ref{evmap}).
\end{cor}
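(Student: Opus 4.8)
The statement has two parts, and both should follow quite directly from Scala's Theorem \ref{scala} together with the functoriality of the equivalence $\Phi$. The plan is to first establish the object-level isomorphism $\Phi(B^{[p+1]})\cong\mathcal{E}_{p+1,B}$, and then upgrade it to a statement about the evaluation maps by invoking naturality in the vector bundle.

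For the first part, I would apply Scala's Theorem \ref{scala} directly with $E=B$ and $n=p+1$. This gives $\Phi(B^{[p+1]})\cong Rpr_{[1,p+1],*}(pr_0^*B\otimes\mathcal{O}_{D_{p+1}})$. Scala's theorem also asserts that $\Phi(B^{[p+1]})$ is concentrated in cohomological degree zero, so the derived pushforward $Rpr_{[1,p+1],*}$ collapses to the ordinary pushforward $pr_{[1,p+1],*}$. Combined with the identification $\mathcal{E}_{p+1,B}\cong pr_{[1,p+1],*}(pr_0^*B\otimes\mathcal{O}_{D_{p+1}})$ recorded just before the statement, this yields $\Phi(B^{[p+1]})\cong\mathcal{E}_{p+1,B}$.

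For the second part, I would first note that $\Phi$, being a Fourier--Mukai equivalence, is $\mathbb{C}$-linear and hence commutes with tensoring by the fixed finite-dimensional vector space $H^0(X,B)$; thus $\Phi\bigl(H^0(X,B)\otimes_{\mathbb{C}}\mathcal{O}_{X^{[p+1]}}\bigr)\cong H^0(X,B)\otimes_{\mathbb{C}}\Phi(\mathcal{O}_{X^{[p+1]}})$. Applying $\Phi$ to the evaluation map (\ref{evmap}) and using the isomorphism of the first part therefore produces a map $H^0(X,B)\otimes_{\mathbb{C}}\Phi(\mathcal{O}_{X^{[p+1]}})\to\mathcal{E}_{p+1,B}$. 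To see that this is the map (\ref{evEB}), I would trace both evaluation maps back to the single universal evaluation $H^0(X,B)\otimes\mathcal{O}_X\to B$ on $X$: the map (\ref{evmap}) is obtained by pulling this back along $p_X\colon\Xi^{[p+1]}\to X$ and pushing forward along $p_{X^{[p+1]}}$, while (\ref{evEB}) is obtained by pulling back along $pr_0$ and pushing forward along $pr_{[1,p+1]}$ after restricting to $D_{p+1}$. Since Scala's isomorphism intertwines precisely these two constructions, the compatibility reduces to the naturality of that isomorphism in the bundle $E$.

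The main obstacle is exactly this last point: verifying that the isomorphism of Theorem \ref{scala} is natural in $E$, i.e. that a morphism of vector bundles $E\to F$ fits into a commutative square relating the induced map $\Phi(E^{[n]})\to\Phi(F^{[n]})$ with the induced map $Rpr_{[1,n],*}(pr_0^*E\otimes\mathcal{O}_{D_n})\to Rpr_{[1,n],*}(pr_0^*F\otimes\mathcal{O}_{D_n})$. I expect this to hold because Scala realizes both the tautological construction $E\mapsto E^{[n]}=p_{X^{[n]},*}p_X^*E$ and the transform $\Phi$ through operations that are manifestly functorial in $E$ (the object $\mathcal{O}_{D_n}$ is fixed, and only $pr_0^*E$ varies), so the comparison square is obtained simply by applying these functors to $E\to F$. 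Once this naturality is granted, the matching of the two evaluation maps is immediate and the corollary follows.
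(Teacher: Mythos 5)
Your argument is correct and is essentially the paper's own: the paper gives no separate proof, presenting the corollary as an immediate rephrasing of Scala's Theorem \ref{scala} (degree-zero concentration collapsing $Rpr_{[1,p+1],*}$ to $pr_{[1,p+1],*}$) combined with the identification $\mathcal{E}_{p+1,B} \cong pr_{[1,p+1],*}(pr_0^*B \otimes \mathcal{O}_{D_{p+1}})$, which is exactly your first paragraph. Your closing discussion of naturality in $E$ makes explicit what the paper leaves implicit; the only point to note is that, strictly speaking, matching the two evaluation maps also requires the compatibility of the two unit maps $\mathcal{O}_{X^{[p+1]}}\to \mathcal{O}_X^{[p+1]}$ and $\mathcal{O}_{X^{p+1}}\to pr_{[1,p+1],*}\mathcal{O}_{D_{p+1}}$ under $\Phi(\mathcal{O}_{X^{[p+1]}})\cong \mathcal{O}_{X^{p+1}}$, a compatibility built into Scala's construction and taken for granted by the paper as well.
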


\section{Hilbert schemes and asymptotic syzygies}

The fundamental connection between Hilbert schemes and syzygies was estabilished by Voisin in \cite{voisin_even}. Again, let $X$ be a smooth projective surface, $L$ an ample and globally generated line bundle on $X$ and $B$ another line bundle. We also fix an integer $p\geq 0$. Then we have the evaluation map
\begin{equation}\label{evmap2} 
\operatorname{ev}_B\colon H^0(X,B) \otimes \mathcal{O}_{X^{[p+1]}} \to B^{[p+1]} 
\end{equation}
that we can twist by $\operatorname{det} L^{[p+1]}$ to get another map
\begin{equation}\label{mapbundles}
\operatorname{ev}_{B,L}\colon H^0(X,B)\otimes \det L^{[p+1]} \to B^{[p+1]}\otimes \det L^{[p+1]}. 
\end{equation}

Building on work of Voisin, Ein and Lazarsfeld realized that one can compute the Koszul cohomology groups from the map induced by $\operatorname{ev}_{B,L}$ on global sections. They proved it in \cite[Lemma 1.1]{ein_lazarsfeld} for smooth curves and we show it here in the case of surfaces.

\begin{lemma}[Voisin, Ein-Lazarsfeld]\label{einlazarsfeldvoisin}
  Let $L$ be an ample and globally generated line bundle on $X$ and $B$ any line bundle. Then
  \begin{equation}\label{mapglobalsections}
    K_{p,1}(X,B,L)= \operatorname{coker} \left[ 
H^0(X,B)\otimes H^0(X^{[p+1]}, \det L^{[p+1]}) \to H^0(X^{[p+1]},B^{[p+1]}\otimes \det L^{[p+1]}) \right]. 
\end{equation}
In particular $K_{p,1}(X,B,L)=0$ if and only if  the map $\operatorname{ev}_{B,L}$ (\ref{mapbundles}) is surjective on global sections.
\end{lemma}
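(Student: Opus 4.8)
The plan is to express the Koszul cohomology group $K_{p,1}(X,B,L)$ via kernel bundles and then identify the relevant spaces with cohomology on the Hilbert scheme. First I would recall from Proposition \ref{koszulkernel} that
\begin{equation*}
K_{p,1}(X,B,L) \cong \operatorname{coker}\left[ \wedge^{p+1}H^0(X,L)\otimes H^0(X,B) \to H^0(X,\wedge^p M_L \otimes B\otimes L) \right],
\end{equation*}
so the whole problem reduces to understanding the bundle $\wedge^p M_L \otimes B\otimes L$ and its global sections in terms of the Hilbert scheme. The natural link is that the fiber of $B^{[p+1]}$ over $\xi$ is $H^0(X,B\otimes\mathcal{O}_\xi)$, and the determinant bundle $\det L^{[p+1]}$ should interpolate between the symmetric powers of $H^0(X,L)$ and the exterior-power data in $\wedge^p M_L$.

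The key technical step, following Voisin's computation and the curve case in \cite[Lemma 1.1]{ein_lazarsfeld}, is to prove the two sheaf-theoretic identifications on $X^{[p+1]}$:
\begin{align*}
H^0(X^{[p+1]},\det L^{[p+1]}) &\cong \wedge^{p+1}H^0(X,L), \\
H^0(X^{[p+1]}, B^{[p+1]}\otimes \det L^{[p+1]}) &\cong H^0(X,\wedge^p M_L\otimes B\otimes L).
\end{align*}
The first is the statement that sections of the determinant of the tautological bundle of $L$ on the Hilbert scheme of $p+1$ points recover the top exterior power of $H^0(X,L)$; this can be obtained by pushing forward along the Hilbert-Chow morphism $\mu$ to the symmetric product and using Lemma \ref{ampleness} together with the description $\det L^{[p+1]}\cong \mu^*L^{(p+1)}\otimes\mathcal{O}(-\delta_{p+1})$ from (\ref{dettaut}), reducing to an alternating-invariants computation on $X^{p+1}$. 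For the second identification, I would use the universal family $\Xi^{[p+1]}$ and the defining exact sequence (\ref{kernelbundles}) for $M_L$: tensoring a suitable Koszul-type resolution by $p_X^*B$ and pushing forward along the finite flat map $p_{X^{[p+1]}}$ should produce $\wedge^p M_L \otimes B\otimes L$, after accounting for the twist by $\det L^{[p+1]}$. This is exactly the surface analogue of Voisin's interpretation in \cite{voisin_even}.

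Once both global-section spaces are identified, I would check that the map induced by $\operatorname{ev}_{B,L}$ on global sections is compatible with the Koszul differential, i.e. that under these isomorphisms the map
\begin{equation*}
H^0(X,B)\otimes H^0(X^{[p+1]},\det L^{[p+1]}) \to H^0(X^{[p+1]},B^{[p+1]}\otimes\det L^{[p+1]})
\end{equation*}
coincides with the map $\wedge^{p+1}H^0(X,L)\otimes H^0(X,B) \to H^0(X,\wedge^p M_L\otimes B\otimes L)$ appearing in Proposition \ref{koszulkernel}. The cokernel is then $K_{p,1}(X,B,L)$ by that proposition, which yields (\ref{mapglobalsections}); the final ``if and only if'' is immediate since $\operatorname{ev}_{B,L}$ is surjective on global sections precisely when its cokernel vanishes.

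The main obstacle I anticipate is the second identification $H^0(X^{[p+1]},B^{[p+1]}\otimes\det L^{[p+1]})\cong H^0(X,\wedge^p M_L\otimes B\otimes L)$. For surfaces the Hilbert scheme genuinely differs from the symmetric product, so unlike the curve case one cannot simply work on $X^{(p+1)}$; one must control the contribution of the exceptional locus of $\mu$ and verify that the twist by $\mathcal{O}(-\delta_{p+1})$ precisely cancels the non-reduced corrections, so that no higher cohomology interferes and the global sections match. Establishing this cleanly—likely via the derived McKay correspondence of Theorems \ref{mckay} and \ref{scala}, or via a direct analysis of $\mu_*$—is where the real work lies, and it is presumably why the statement is restricted to surfaces rather than asserted in arbitrary dimension.
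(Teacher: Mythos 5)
Your reduction has the right shape, but it leaves essentially all of the mathematical content of the lemma unproven. The identification
\begin{equation*}
H^0(X^{[p+1]}, B^{[p+1]}\otimes \det L^{[p+1]}) \;\cong\; H^0(X,\wedge^p M_L\otimes B\otimes L),
\end{equation*}
together with the compatibility of the evaluation map with the Koszul-type map of Proposition \ref{koszulkernel}, \emph{is} Voisin's theorem in the form needed here: once it is granted, the lemma follows immediately, exactly as you say. But you offer only a sketch of this step --- tensor a ``Koszul-type resolution'' by $p_X^*B$ and push forward along $p_{X^{[p+1]}}$ --- and that sketch is not workable as stated: $p_{X^{[p+1]},*}\,p_X^*B = B^{[p+1]}$ by definition, and pushing forward along $p_{X^{[p+1]}}$ produces sheaves on $X^{[p+1]}$, not on $X$, so it cannot by itself ``produce'' $\wedge^p M_L\otimes B\otimes L$; Voisin's actual argument goes through the residual-subscheme geometry of the incidence variety over the \emph{curvilinear} locus and is a genuinely delicate piece of work. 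You yourself flag this as ``where the real work lies,'' which is an accurate self-assessment: what you have written is a correct (and easy) cokernel comparison plus a restatement of the hard step, not a proof of it. A smaller inaccuracy: your appeal to Lemma \ref{ampleness} for the identification $H^0(X^{[p+1]},\det L^{[p+1]})\cong \wedge^{p+1}H^0(X,L)$ is off target, since that lemma only concerns ampleness of $A^{(n)}$.

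The paper does not reprove Voisin's theorem; it cites it, in the formulation of \cite[Corollary 5.5, Remark 5.6]{aprodu_nagel}: $K_{p,1}(X,B,L)$ is the cokernel of the restriction map from $H^0(X\times U, B\boxtimes \det L^{[p+1]}_U)$ to the sections of the same bundle on the universal family over $U$, where $U\subseteq X^{[p+1]}$ is the curvilinear locus. The actual proof then consists of two steps your proposal never addresses: (a) push this restriction map forward along $pr_U$ and use the projection formula, flat base change, and the definition of the tautological bundle to recognize it, on global sections, as the restriction of $\operatorname{ev}_{B,L}$ to $U$; and (b) pass from $U$ to all of $X^{[p+1]}$ using that $X^{[p+1]}$ is normal and the complement of $U$ has codimension $4$ (Remark \ref{sizecurvilinear}), in particular at least $2$, so the two spaces of global sections agree. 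Replacing your unproven identification with this citation and adding steps (a) and (b) would turn your outline into the paper's proof; as it stands, the central identification is a genuine gap.
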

\begin{proof}
Let $U=U_{p+1}\subseteq X^{[p+1]}$ be the open subset of curvilinear subschemes. Denote also by $\Xi^{[p+1]}_U$ the corresponding universal family: more precisely $\Xi^{[p+1]}_U = \Xi^{[p+1]} \cap (X \times U)$. We also denote by $L^{[p+1]}_U$ the restriction of $L^{[p+1]}$ to $U$. By definition we have $\Xi^{[p+1]}_U \subseteq X \times U$ so that we can consider the restriction map
\begin{equation}\label{restriction}
H^0(X \times U, B \boxtimes \det L^{[p+1]}_{U}) \to H^0(\Xi^{[p+1]}_{U}, ( B \boxtimes \det L^{[p+1]}_{U})_{|\Xi^{[p+1]}_{U}} ).
\end{equation}
Voisin proved that $K_{p,1}(X,B,L)$ coincides with the cokernel of this map \cite[Corollary 5.5, Remark 5.6]{aprodu_nagel}. Now we want to rewrite (\ref{restriction}). By definition, we see that it is the map induced on global sections by the morphism of sheaves on $X\times U$:
\begin{equation}\label{restrictionsheaves}
pr_X^*B \otimes pr_U^*(\det L^{[p+1]}_U) \to (pr_X^*B \otimes pr_U^*(\det L^{[p+1]}_U))\otimes \mathcal{O}_{\Xi^{[p+1]}_U} .
\end{equation}
Hence, we can look at (\ref{restriction}) also as the map induced on global sections by the pushforward of (\ref{restrictionsheaves}) along $pr_U$. By the projection formula we can write this pushforward as
\begin{equation}
pr_{U,*}(pr_X^*B)\otimes \det L^{[p+1]}_U  \to  pr_{U,*}(pr_X^*B \otimes \mathcal{O}_{\Xi^{[p+1]}_U}) \otimes \det L^{[p+1]}_U .
\end{equation}
Now, using the definition of tautological bundles, together with flat base change along $U\hookrightarrow X^{[p+1]}$  we can rewrite this as
\begin{equation}\label{sheavesU}
H^0(X,B) \otimes_{\mathbb{C}} \det L^{[p+1]}_U  \to B^{[p+1]}_{U} \otimes \det L^{[p+1]}_U
\end{equation}
where the map is actually the restriction of the evaluation map $\operatorname{ev}_{B,L}$ (\ref{mapbundles}) to $U$. Using the fact that $X^{[p+1]}$ is normal and that the complement of $U$ has codimension at least two (see Remark \ref{sizecurvilinear}), we see that the map induced by (\ref{sheavesU}) on global sections is the same as the map (\ref{mapglobalsections}) and we conclude. 
\end{proof}

\begin{rmk}
Since $K_{p,q}(X,B,L) = K_{p,1}(X,B\otimes L^{\otimes (q-1)},L)$, the previous lemma gives a representation of every Koszul cohomology group.
\end{rmk}

Using this lemma, we want to study the vanishing of $K_{p,1}(X,B,L)$ when $L\gg 0$. The idea is to pushforward the map $\operatorname{ev}_{B,L}$ (\ref{mapbundles}) to the symmetric product via the Hilbert-Chow morphism $\mu\colon X^{[p+1]} \to X^{(p+1)}$. This allows us to give a characterization of the vanishing of $K_{p,1}(X,B,L)$ purely in terms of $B$.

We first need an easy lemma. We give the proof for completeness.

\begin{lemma}\label{easylemma}
Let $X$ be a projective scheme and $\phi\colon \mathcal{F}\to \mathcal{G}$ a map of coherent sheaves on $X$. Then $\phi$ is surjective if and only if the induced map $\mathcal{F} \otimes L \to \mathcal{G}\otimes L$ is surjective on global sections when $L\gg 0$.
\end{lemma}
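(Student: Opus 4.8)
The plan is to prove both implications of this characterization. The "only if" direction is essentially trivial: if $\phi\colon\mathcal F\to\mathcal G$ is surjective, then since $X$ is projective and $L$ is ample, for $L\gg0$ the sheaf $\ker\phi$ satisfies $H^1(X,(\ker\phi)\otimes L)=0$ by Serre vanishing, so the long exact sequence in cohomology associated to $0\to\ker\phi\to\mathcal F\to\mathcal G\to0$ (tensored by $L$) shows that $H^0(X,\mathcal F\otimes L)\to H^0(X,\mathcal G\otimes L)$ is surjective. So the content is entirely in the converse.

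For the "if" direction, suppose $\phi$ is not surjective; I would like to produce a point where the induced map on global sections fails to be surjective for all $L\gg0$. Let $\mathcal Q=\operatorname{coker}\phi$, which is a nonzero coherent sheaf. Then $\phi\otimes L$ is surjective on global sections for some particular $L\gg0$ exactly when $H^0(X,\mathcal F\otimes L)\to H^0(X,\mathcal G\otimes L)$ hits everything; but since $H^0(X,\mathcal G\otimes L)\to H^0(X,\mathcal Q\otimes L)$ factors the obstruction, the cleaner route is to observe that for $L\gg0$ we have $H^1(X,(\operatorname{im}\phi)\otimes L)=0$, so the cokernel of $\phi$ on global sections is exactly $H^0(X,\mathcal Q\otimes L)$. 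Thus $\phi\otimes L$ is surjective on global sections if and only if $H^0(X,\mathcal Q\otimes L)=0$. Since $\mathcal Q\neq0$ is coherent, for $L$ ample we have $H^0(X,\mathcal Q\otimes L)\neq0$ for $L\gg0$ (indeed it grows); hence surjectivity on global sections fails for all large $L$, contradicting the hypothesis.

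I would write this as follows. Factor $\phi$ through its image, giving $0\to\operatorname{im}\phi\to\mathcal G\to\mathcal Q\to0$ with $\mathcal Q=\operatorname{coker}\phi$. By Serre vanishing, for $L\gg0$ we have $H^1(X,(\operatorname{im}\phi)\otimes L)=0$, and simultaneously $H^0(X,\mathcal F\otimes L)\twoheadrightarrow H^0(X,(\operatorname{im}\phi)\otimes L)$ (again using $H^1$ of the kernel of $\mathcal F\to\operatorname{im}\phi$). Therefore the cokernel of $H^0(\mathcal F\otimes L)\to H^0(\mathcal G\otimes L)$ is naturally isomorphic to $H^0(X,\mathcal Q\otimes L)$. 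Consequently $\phi\otimes L$ is surjective on global sections for $L\gg0$ if and only if $H^0(X,\mathcal Q\otimes L)=0$ for $L\gg0$, and this holds if and only if $\mathcal Q=0$, i.e. $\phi$ is surjective.

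The only mild subtlety — and the step I would be most careful about — is the assertion that a nonzero coherent sheaf $\mathcal Q$ has nonzero global sections after twisting by $L\gg0$: this requires $L$ to be ample (which is part of the standing meaning of $L\gg0$ here, where $L=P\otimes A^{\otimes d}$ with $A$ ample), and it follows because on the support of $\mathcal Q$ the ample twist has enough sections to map onto a skyscraper quotient of $\mathcal Q$. Since the paper's convention for $L\gg0$ always includes an ample factor, this causes no trouble; I would simply invoke Serre vanishing and the fact that an ample line bundle separates points and generates sections of any coherent sheaf for large twists.
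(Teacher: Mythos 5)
Your proposal is correct and follows essentially the same route as the paper: both break the four-term sequence $0\to\ker\phi\to\mathcal{F}\to\mathcal{G}\to\operatorname{coker}\phi\to0$ using Serre vanishing on the kernel and image to identify the cokernel on global sections with $H^0(X,\operatorname{coker}\phi\otimes L)$, and then use global generation of $\operatorname{coker}\phi\otimes L$ for $L\gg0$ to conclude that this vanishes if and only if $\operatorname{coker}\phi=0$. The subtlety you flag at the end is precisely the point the paper also relies on, so there is no gap.
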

\begin{proof}
  We have an exact sequence of sheaves
  \begin{equation}
    0 \to \operatorname{Ker } \phi \to \mathcal{F} \to \mathcal{G} \to \operatorname{Coker } \phi \to 0
  \end{equation}
  and for $L\gg 0$ we have that $H^1(X,\operatorname{Ker } \phi \otimes L)=H^1(X,\operatorname{Im } \phi \otimes L)=0$ thanks to Serre vanishing. Hence, on global sections we obtain an exact sequence
  \begin{equation}
    0 \to H^0(X,\operatorname{Ker } \phi \otimes L) \to H^0(X,\mathcal{F}\otimes L) \to H^0(X,\mathcal{G}\otimes L) \to H^0(X,\operatorname{Coker } \phi \otimes L) \to 0.
  \end{equation}
Since $L\gg 0$, the sheaf $\operatorname{Coker } \phi\otimes L$ is globally generated, so that $H^0(X,\operatorname{Coker } \phi \otimes L)=0$ if and only if $\operatorname{Coker } \phi =0$. But this is exactly what we want to prove.
\end{proof}

Now we can state our criterion. In what follows, we will denote by $\mathfrak{a}_n$ the alternating representation of $\mathfrak{S}_n$: then from any $\mathfrak{S}_n$-equivariant sheaf $\mathcal{F}$ on $X^n$, we can get another one by $\mathcal{F}\otimes \mathfrak{a}_n$, and the same holds for complexes in the derived category $D^b_{\mathfrak{S}_n}(X^n)$. It is easy to see that tensoring by $\mathfrak{a}_n$ is an exact functor.

\begin{prop}\label{criterion}
  Let $X$ be a smooth projective surface and $B$ a line bundle on $X$. Then $K_{p,1}(X,B,L)=0$ for $L\gg 0$ if and only if the induced map of sheaves on $X^{(p+1)}$
  \begin{equation}\label{characterizationfirstmap}
    H^0(X,B)\otimes_{\mathbb{C}} \mu_*(\mathcal{O}(-\delta_{p+1})) \to \mu_*(B^{[p+1]}\otimes \mathcal{O}(-\delta_{p+1}))
  \end{equation}
  is surjective. Moreover, this map is isomorphic to the map
  \begin{equation}\label{characterizationsecondmap}
   H^0(X,B)\otimes  \pi_*^{\mathfrak{S}_{p+1}}(\mathcal{O}_{X^n} \otimes \mathfrak{a}_{p+1}) \to \pi^{\mathfrak{S}_{p+1}}_* (\mathcal{E}_{p+1,B} \otimes \mathfrak{a}_{p+1}).
  \end{equation}
\end{prop}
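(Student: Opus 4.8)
The plan is to combine Lemma \ref{einlazarsfeldvoisin}, Lemma \ref{easylemma}, and the derived McKay correspondence (Theorems \ref{mckay}, \ref{scala}, Corollary \ref{scala2}) to reduce the asymptotic vanishing statement to a single surjectivity of sheaves downstairs on $X^{(p+1)}$. Write $n=p+1$ throughout. First I would recall from Lemma \ref{einlazarsfeldvoisin} that for a fixed ample and globally generated $L$, the group $K_{p,1}(X,B,L)$ is the cokernel of the map induced on global sections by $\operatorname{ev}_{B,L}$ of (\ref{mapbundles}), namely $H^0(X,B)\otimes \det L^{[n]} \to B^{[n]}\otimes \det L^{[n]}$. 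The key observation is that since $\mu\colon X^{[n]}\to X^{(n)}$ is birational with $X^{[n]}$ smooth and $X^{(n)}$ normal, and since $\det L^{[n]}\cong \mu^*L^{(n)}\otimes \mathcal{O}(-\delta_n)$ by (\ref{dettaut}), pushing forward along $\mu$ and using the projection formula for $\mu^*L^{(n)}$ turns $\operatorname{ev}_{B,L}$ into a twist by $L^{(n)}$ of the map (\ref{characterizationfirstmap}). Because $L^{(n)}$ is ample on $X^{(n)}$ whenever $L\gg 0$ by Lemma \ref{ampleness}, and taking $\mu_*$ of $\operatorname{ev}_B\otimes\mathcal{O}(-\delta_n)$ commutes with this ample twist, Lemma \ref{easylemma} applied on $X^{(n)}$ to the map (\ref{characterizationfirstmap}) says exactly that this map is surjective if and only if its $L^{(n)}$-twist is surjective on global sections for $L^{(n)}\gg 0$. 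I would need to check carefully that $H^0(X^{(n)}, \mu_*(-)\otimes L^{(n)})=H^0(X^{[n]},(-)\otimes \det L^{[n]})$, which follows from the projection formula and $R^0\mu_*$ since the relevant higher direct images do not interfere at the level of global sections; this identification is what links the two cokernels.

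Second, for the identification of (\ref{characterizationfirstmap}) with (\ref{characterizationsecondmap}), I would pass through the derived McKay correspondence. By Corollary \ref{scala2} we have $\Phi(B^{[n]})\cong \mathcal{E}_{n,B}$ and $\Phi(\mathcal{O}_{X^{[n]}})\cong \mathcal{E}_{n,\mathcal{O}_X}=\mathcal{O}_{X^{(n)}}$-type object, and the evaluation map (\ref{evmap}) corresponds under $\Phi$ to the map $H^0(X,B)\otimes \Phi(\mathcal{O}_{X^{[n]}})\to \Phi(B^{[n]})$. The functor $\Phi$ is built from the Bridgeland-King-Reid kernel, and the standard dictionary (see \cite{krug_mckay}) identifies the composite of $\Phi$ with taking $\mathfrak{S}_n$-invariants $\pi_*^{\mathfrak{S}_n}(-)$ after a twist by the alternating representation $\mathfrak{a}_n$, precisely because the line bundle $\mathcal{O}(-\delta_n)=\det\mathcal{O}_X^{[n]}$ corresponds under the equivalence to the alternating character: descent of $\mathfrak{S}_n$-sheaves from $X^n$ to $X^{(n)}$ via $\pi_*^{\mathfrak{S}_n}(-\otimes \mathfrak{a}_n)$ matches $\mu_*(-\otimes\mathcal{O}(-\delta_n))$. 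Thus applying $\pi_*^{\mathfrak{S}_n}(-\otimes\mathfrak{a}_n)$ to the $\Phi$-image of $\operatorname{ev}_B$ recovers the map (\ref{characterizationsecondmap}), and the same construction applied directly downstairs gives (\ref{characterizationfirstmap}); matching the two requires only that these two descent procedures agree on the nose.

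The main obstacle I expect is the compatibility step: showing rigorously that $\mu_*(\,-\otimes\mathcal{O}(-\delta_n)\,)$ on $X^{[n]}$ agrees with $\pi_*^{\mathfrak{S}_n}(\Phi(-)\otimes\mathfrak{a}_n)$ as functors, applied to the specific objects $\mathcal{O}_{X^{[n]}}$ and $B^{[n]}$ and to the morphism $\operatorname{ev}_B$ between them. One cannot simply quote $\Phi$ abstractly, because $\Phi$ lands in the \emph{derived} category, so I must verify that the relevant objects $\Phi(B^{[n]})\cong\mathcal{E}_{n,B}$ are concentrated in degree zero (which is exactly the content of Scala's Theorem \ref{scala}) so that taking $\pi_*^{\mathfrak{S}_n}(-\otimes\mathfrak{a}_n)$ of the $0$-th cohomology object gives the correct underived sheaf map, with no higher-cohomology contributions. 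Here the sign/character bookkeeping relating $\mathcal{O}(-\delta_n)$ to $\mathfrak{a}_n$ (the alternating representation) and the verification that $\pi_*^{\mathfrak{S}_n}$ of the degree-zero term indeed equals $\mu_*$ of the tautological bundle twisted by $\mathcal{O}(-\delta_n)$ is the delicate point; I would handle it by unwinding the explicit BKR kernel as in Haiman and the reformulation of \cite{krug_mckay}, checking the functoriality on the evaluation morphism so that the isomorphism of maps (\ref{characterizationfirstmap})$\cong$(\ref{characterizationsecondmap}) holds, and not merely an isomorphism of their source and target sheaves.
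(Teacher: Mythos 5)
Your proposal is correct and follows essentially the same route as the paper: the first part via Lemma \ref{einlazarsfeldvoisin}, pushforward along $\mu$, the formula $\det L^{[p+1]}\cong\mu^*L^{(p+1)}\otimes\mathcal{O}(-\delta_{p+1})$, the projection formula, Lemma \ref{ampleness} and Lemma \ref{easylemma}; the second part via Krug's identification $\mathcal{O}(-\delta_{p+1})\cong\Psi(\mathcal{O}_{X^{p+1}}\otimes\mathfrak{a}_{p+1})$, the functorial compatibility $\mu_*(-\otimes\Psi(\mathcal{O}_{X^{p+1}}\otimes\mathfrak{a}_{p+1}))\cong\pi_*^{\mathfrak{S}_{p+1}}(\Phi(-)\otimes\mathfrak{a}_{p+1})$, and Corollary \ref{scala2}. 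The delicate points you flag (functoriality on the evaluation morphism, degree-zero concentration of $\Phi(B^{[p+1]})$) are exactly what the paper resolves by citing the functorial isomorphisms of Krug and Scala.
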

\begin{proof}
  We know from Lemma  \ref{einlazarsfeldvoisin} that $K_{p,1}(X,B,L)=0$ if and only if the map
  \begin{equation} H^0(X,B)\otimes_{\mathbb{C}} \det L^{[p+1]} \to B^{[p+1]}\otimes \det L^{[p+1]} \end{equation}
  is surjective on global sections. Taking the pushforward along $\mu$, this is equivalent to saying that
  \begin{equation}  H^0(X,B)\otimes_{\mathbb{C}} \mu_*(\det L^{[p+1]}) \to \mu_*(B^{[p+1]}\otimes \det L^{[p+1]}) \end{equation}
  is surjective on global sections. However, since $\det L^{[p+1]} = \mathcal{O}(-\delta_{p+1}) \otimes \mu^*L^{({p+1})}$ by (\ref{dettaut}), we can rewrite the last map using the projection formula as
  \begin{equation}
    \left(H^0(X,B) \otimes_{\mathbb{C}} \mu_*(\mathcal{O}(-\delta_{p+1}))\right)\otimes L^{(p+1)} \to \mu_*(B^{[p+1]}\otimes \mathcal{O}(-\delta_{p+1})) \otimes L^{(p+1)}.
  \end{equation}
  Now, Lemma \ref{ampleness}  shows that  $L\gg 0$ implies $L^{(p+1)} \gg 0$ as well, and then Lemma \ref{easylemma} shows that this map is surjective on global sections if and only if the map (\ref{characterizationfirstmap}) is surjective.

  To conclude, we need to show that the maps (\ref{characterizationfirstmap}) and (\ref{characterizationsecondmap}) are isomorphic: to do this we will use the equivalences in Haiman's Theorem \ref{mckay}. First, Krug has proven in \cite[Theorem 1.1]{krug_mckay} that $\mathcal{O}(-\delta_{p+1}) \cong \Psi(\mathcal{O}_{X^{p+1}} \otimes \mathfrak{a}_{p+1})$, so that we can rewrite (\ref{characterizationfirstmap}) as 
  \begin{equation}
   H^0(X,B)\otimes \mu_*(\Psi(\mathcal{O}_{X^{p+1}}\otimes \mathfrak{a}_{p+1})) \to \mu_*(B^{[p+1]}\otimes \Psi(\mathcal{O}_{X^{p+1}}\otimes \mathfrak{a}_{p+1})).
 \end{equation}
 Now, using \cite[Proposition 5.1]{krug_mckay} and \cite[Proposition 1.3.3]{scala}, we get functorial isomorphisms in $D^b(X^{(p+1)})$:
 \begin{small} 
 \begin{equation}
 \mu_*(\Psi(\mathcal{O}_{X^{p+1}}\otimes \mathfrak{a}_{p+1})) \cong \pi_*^{\mathfrak{S}_{p+1}}(\mathcal{O}_{X^{p+1}}\otimes \mathfrak{a}_{p+1}), \qquad \mu_*(B^{[p+1]}\otimes \Psi(\mathcal{O}_{X^{p+1}}\otimes \mathfrak{a}_{p+1})) \cong \pi_*^{\mathfrak{S}_{p+1}}(\Phi(B^{[p+1]}) \otimes \mathfrak{a}_{p+1} )
\end{equation}
\end{small} 
so that the map (\ref{characterizationfirstmap}) corresponds to
\begin{equation}
H^0(X,B)\otimes  \pi_*^{\mathfrak{S}_{p+1}}(\mathcal{O}_{X^{p+1}} \otimes \mathfrak{a}_{p+1}) \to \pi^{\mathfrak{S}_{p+1}}_* (\Phi(B^{[p+1]}) \otimes \mathfrak{a}_{p+1})
\end{equation}
and since $\Phi(B^{[p+1]}) \cong \mathcal{E}_{p+1,B}$ by Corollary \ref{scala2}, we conclude.
\end{proof}

To illustrate the criterion of Proposition \ref{criterion} we use it to give alternative proofs to Theorems A and B from \cite{ein_lazarsfeld_yang} in the case of surfaces:

\begin{cor}\cite[Theorem A]{ein_lazarsfeld_yang}\label{thmAreproof}
Let $X$ be a smooth projective surface and $B$ a $p$-jet very ample line bundle on $X$. Then $K_{p,1}(X,B,L)=0$ for $L\gg 0$.
\end{cor}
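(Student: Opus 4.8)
The plan is to apply the criterion of Proposition \ref{criterion}. Since $B$ is $p$-jet very ample, the evaluation map of sheaves on $X^{p+1}$
\begin{equation*}
H^0(X,B)\otimes \mathcal{O}_{X^{p+1}} \to \mathcal{E}_{p+1,B}
\end{equation*}
is surjective by the very definition of jet very ampleness recalled in the Introduction (equation (\ref{evEB})). The goal is to deduce that the map (\ref{characterizationsecondmap}),
\begin{equation*}
H^0(X,B)\otimes \pi_*^{\mathfrak{S}_{p+1}}(\mathcal{O}_{X^{p+1}}\otimes \mathfrak{a}_{p+1}) \to \pi_*^{\mathfrak{S}_{p+1}}(\mathcal{E}_{p+1,B}\otimes \mathfrak{a}_{p+1}),
\end{equation*}
is surjective, because Proposition \ref{criterion} then gives $K_{p,1}(X,B,L)=0$ for $L\gg 0$.

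The key point is that the functor taking a $\mathfrak{S}_{p+1}$-equivariant sheaf to its invariants, $\mathcal{F}\mapsto \pi_*^{\mathfrak{S}_{p+1}}(\mathcal{F})$, is \emph{exact}. This is the standard fact that over $\mathbb{C}$ taking invariants under a finite group is exact (averaging over the group provides a splitting, since $|\mathfrak{S}_{p+1}|$ is invertible), combined with the exactness of the finite pushforward $\pi_*$. First I would tensor the surjection $H^0(X,B)\otimes\mathcal{O}_{X^{p+1}}\to\mathcal{E}_{p+1,B}$ by the alternating representation $\mathfrak{a}_{p+1}$; since tensoring by $\mathfrak{a}_{p+1}$ is exact (as noted before Proposition \ref{criterion}), this remains a surjection of $\mathfrak{S}_{p+1}$-equivariant sheaves. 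Applying the exact functor $\pi_*^{\mathfrak{S}_{p+1}}$ then preserves surjectivity, yielding exactly the surjectivity of (\ref{characterizationsecondmap}).

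I do not expect any serious obstacle here: the whole content has been front-loaded into Proposition \ref{criterion} and Corollary \ref{scala2}, which identify the relevant maps, so the corollary reduces to an exactness bookkeeping argument. The only mild care required is to confirm that tensoring the evaluation map by $\mathfrak{a}_{p+1}$ and then applying invariants genuinely transports the jet-very-ampleness surjectivity to the surjectivity demanded by the criterion; this is immediate from the two exactness facts above. Finally I would invoke Proposition \ref{criterion} to conclude $K_{p,1}(X,B,L)=0$ for $L\gg 0$.
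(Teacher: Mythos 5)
Your proposal is correct and is essentially identical to the paper's own proof: both reduce the statement to the surjectivity criterion of Proposition \ref{criterion}, note that $p$-jet very ampleness is exactly the surjectivity of the evaluation map $H^0(X,B)\otimes\mathcal{O}_{X^{p+1}}\to\mathcal{E}_{p+1,B}$, and then transport this surjectivity through the two exact functors (tensoring by $\mathfrak{a}_{p+1}$ and the invariant pushforward $\pi_*^{\mathfrak{S}_{p+1}}$). Nothing further is needed.
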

\begin{proof}
By Proposition \ref{criterion}, $K_{p,1}(X,B,L)=0$ for $L\gg 0$ if and only if the map 
\begin{equation}
   H^0(X,B)\otimes  \pi_*^{\mathfrak{S}_{p+1}}(\mathcal{O}_{X^n} \otimes \mathfrak{a}_{p+1}) \to \pi^{\mathfrak{S}_{p+1}}_* (\mathcal{E}_{p+1,B} \otimes \mathfrak{a}_{p+1}).
  \end{equation}
  is surjective. The assumption that $B$ is $p$-jet very ample means that the map
  \begin{equation}
H^0(X,B)\otimes \mathcal{O}_{X^{p+1}} \to \mathcal{E}_{p+1,B}
\end{equation}
is surjective. Since both functors of tensoring by $\mathfrak{a}_{p+1}$ and taking pushforward $\pi^{\mathfrak{S}_{p+1}}_*$ are exact, it follows that the first map is surjective as well.
\end{proof}

\begin{cor}\cite[Theorem B]{ein_lazarsfeld_yang}\label{thmBreproof}
Let $X$ be a smooth projective surface and $B$ a line bundle on $X$. If $K_{p,1}(X,B,L)=0$ for $L\gg 0$, then  the evaluation map
\begin{equation} \operatorname{ev}_{\xi}\colon  H^0(X,B) \to H^0(X,B\otimes \mathcal{O}_{\xi}) \end{equation}
is surjective for any subscheme $\xi\in X^{[p+1]}$ consisting of $p+1$ distinct points.
\end{cor}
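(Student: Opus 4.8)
The plan is to read off the statement directly from the criterion of Proposition \ref{criterion}, exploiting the fact that the Hilbert-Chow morphism is an isomorphism over the locus of reduced cycles, so that the pushforward to $X^{(p+1)}$ loses no information there. First I would invoke Proposition \ref{criterion}: the hypothesis that $K_{p,1}(X,B,L)=0$ for $L\gg 0$ is equivalent to the surjectivity of the map of coherent sheaves on $X^{(p+1)}$
\begin{equation*}
H^0(X,B)\otimes_{\mathbb{C}}\mu_*(\mathcal{O}(-\delta_{p+1})) \to \mu_*\left(B^{[p+1]}\otimes \mathcal{O}(-\delta_{p+1})\right),
\end{equation*}
which is nothing but $\mu_*$ applied to the twisted evaluation map $\operatorname{ev}_B\otimes \mathcal{O}(-\delta_{p+1})$ built from (\ref{evmap}). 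Since surjectivity of a map of coherent sheaves can be checked on stalks, it is preserved both after restricting to any open subset and after passing to fibers (the functor $-\otimes k(\xi)$ being right exact).

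Next I would localize over the reduced locus. Let $V\subseteq X^{(p+1)}$ be the open subset of cycles $x_1+\dots+x_{p+1}$ with pairwise distinct support. Because $2\delta_{p+1}$ is the exceptional divisor of $\mu$ and $\mu^{-1}(V)$ consists exactly of the reduced subschemes of $p+1$ distinct points, the morphism $\mu$ restricts to an isomorphism $\mu^{-1}(V)\xrightarrow{\sim} V$. As pushforward commutes with restriction to open sets, restricting the displayed map to $V$ and transporting it through this isomorphism recovers precisely the restriction to $\mu^{-1}(V)$ of $\operatorname{ev}_B\otimes \mathcal{O}(-\delta_{p+1})$, which is therefore surjective.

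Finally I would take the fiber of this map over a point $\xi=\{x_1,\dots,x_{p+1}\}\in \mu^{-1}(V)$. By the definition (\ref{tautological}) of the tautological bundle, the fiber of $\operatorname{ev}_B$ over $\xi$ is exactly the map $\operatorname{ev}_\xi\colon H^0(X,B)\to H^0(X,B\otimes \mathcal{O}_\xi)$ of (\ref{evmapX}), while tensoring by the one-dimensional space $\mathcal{O}(-\delta_{p+1})|_\xi$ has no effect on surjectivity. Hence $\operatorname{ev}_\xi$ is surjective, as claimed. There is no serious obstacle here once Proposition \ref{criterion} is in hand: the only genuine input is the observation that $\mu$ is an isomorphism over $V$, so that no information is contracted. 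This is also precisely why the argument is confined to reduced $\xi$ — for non-reduced subschemes $\mu$ has positive-dimensional fibers and the fiber of the pushforward no longer computes $\operatorname{ev}_\xi$, which is exactly the difficulty addressed by the vanishing results of the subsequent sections.
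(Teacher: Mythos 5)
Your proposal is correct and follows essentially the same route as the paper: invoke Proposition \ref{criterion}, restrict the pushed-forward map to the open locus $V$ of reduced cycles where the Hilbert-Chow morphism is an isomorphism, and then remove the twist by $\mathcal{O}(-\delta_{p+1})$ (the paper tensors by $\mathcal{O}(\delta_{p+1})$, you pass to fibers, which is equivalent) to recover the surjectivity of $\operatorname{ev}_\xi$ on fibers over reduced subschemes. Your closing remark about why the argument is confined to reduced $\xi$ matches the role of the vanishing results in the later sections of the paper.
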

\begin{proof}
By Proposition  \ref{criterion}, if $K_{p,1}(X,B,L)=0$ for $L\gg 0$ then the map
\begin{equation}
  H^0(X,B)\otimes \mu_*\mathcal{O}(-\delta_{p+1}) \to \mu_*(B^{[p+1]}\otimes \mathcal{O}(-\delta_{p+1}))
\end{equation}
is surjective. This map restricted to the open subset $V\subseteq X^{(p+1)}$ consisting of reduced cycles is again surjective. Now it is easy to see that $\mu_{|\mu^{-1}(V)}\colon \mu^{-1}(V) \to V$ is an isomorphism, so that the map
\begin{equation}
  H^0(X,B) \otimes \mathcal{O}(-\delta_{p+1}) \to B^{[p+1]} \otimes \mathcal{O}(-\delta_{p+1})
\end{equation}
is surjective on $\mu^{-1}(V)$. Tensoring by $\mathcal{O}(\delta_{p+1})$ we obtain what we want. 
\end{proof}

\section{Higher order embeddings and asymptotic syzygies on surfaces}

Using Proposition \ref{criterion}, we can prove Theorem B from the Introduction. The key conditions are some local cohomological vanishing for tautological bundles.  We would like to thank Victor Lozovanu for a discussion about the following proposition.
\begin{prop}\label{condition}
	Let $X$ be a smooth projective surface, $p\geq 0$ an integer and suppose that
	\begin{equation}\label{conditionO}
	R^{i+1}\mu_*({\operatorname{Sym}^i \mathcal{O}_X^{[p+1]}}^{\vee})=0  \qquad \text{for all } 0\leq i < p. 
	\end{equation}
	Let also $B$ be a $p$-very ample line bundle on $X$.
	Then $K_{p,1}(X,B,L)=0$ for $L\gg 0$. 
\end{prop}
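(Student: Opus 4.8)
The plan is to use the criterion of Proposition~\ref{criterion}, which reduces the asymptotic vanishing $K_{p,1}(X,B,L)=0$ to the surjectivity of the single map of sheaves on $X^{(p+1)}$
\begin{equation*}
H^0(X,B)\otimes_{\mathbb{C}} \mu_*(\mathcal{O}(-\delta_{p+1})) \to \mu_*(B^{[p+1]}\otimes \mathcal{O}(-\delta_{p+1})).
\end{equation*}
Since $\mu$ is proper, surjectivity of a pushforward can be detected by controlling the higher direct images that obstruct exactness. Concretely, I would apply $R\mu_*$ to the evaluation sequence coming from $\operatorname{ev}_B\colon H^0(X,B)\otimes \mathcal{O}_{X^{[p+1]}}\to B^{[p+1]}$ twisted by $\mathcal{O}(-\delta_{p+1})$. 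The hypothesis that $B$ is $p$-very ample means, via the discussion in Section~4, that $\operatorname{ev}_B$ is surjective on $X^{[p+1]}$; let $\mathcal{M}$ be its kernel, so that we have a short exact sequence $0\to \mathcal{M}\otimes\mathcal{O}(-\delta_{p+1}) \to H^0(X,B)\otimes\mathcal{O}(-\delta_{p+1}) \to B^{[p+1]}\otimes\mathcal{O}(-\delta_{p+1})\to 0$. Pushing forward, the map (\ref{characterizationfirstmap}) will be surjective as soon as $R^1\mu_*(\mathcal{M}\otimes\mathcal{O}(-\delta_{p+1}))=0$.

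The heart of the matter is therefore to identify this $R^1\mu_*$ with the vanishing assumed in (\ref{conditionO}). For this I would filter the kernel $\mathcal{M}$ by the symmetric powers of $\mathcal{O}_X^{[p+1]}$. The evaluation map is modeled on the tautological sequence $0\to M_B\to H^0(X,B)\otimes\mathcal{O}_{X^{[p+1]}}\to B^{[p+1]}\to 0$, and the relevant observation is that the obstruction sheaves $R^{i+1}\mu_*(\wedge^{i}M_B\otimes(\cdots))$ can be rewritten, after untwisting the determinant line bundle and using $\det\mathcal{O}_X^{[p+1]}=\mathcal{O}(-\delta_{p+1})$, in terms of $R^{i+1}\mu_*(\operatorname{Sym}^i(\mathcal{O}_X^{[p+1]})^{\vee})$. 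The cleanest route is probably to compute $R\mu_*$ of the full Koszul-type resolution of the kernel $\mathcal{M}$ built from the tautological bundle, so that the associated graded pieces are exactly the sheaves $\operatorname{Sym}^i\mathcal{O}_X^{[p+1]}$ twisted down, and the hypothesis (\ref{conditionO}) kills each contribution to the degree-one piece.

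The main obstacle I anticipate is bookkeeping the twists correctly so that the higher direct images that appear really match the indices $0\le i<p$ in (\ref{conditionO}), rather than producing off-by-one shifts or extra terms from $B$ itself. In particular one must check that the line bundle $B$ only contributes through its global sections, i.e.\ that twisting by $B$ does not introduce additional higher direct images along $\mu$ beyond those already encoded by the tautological structure; this is where $p$-very ampleness (surjectivity of $\operatorname{ev}_B$ on all of $X^{[p+1]}$, not merely on the curvilinear locus) is essential, since it guarantees the kernel $\mathcal{M}$ is locally free and that its graded pieces are precisely the $\operatorname{Sym}^i$ appearing in the hypothesis. A secondary technical point is justifying that it suffices to check surjectivity of the pushforward as a map of sheaves rather than on global sections, but this is handled by Lemma~\ref{easylemma} together with Lemma~\ref{ampleness} exactly as in the proof of Proposition~\ref{criterion}. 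Once the identification of $R^1\mu_*$ with (\ref{conditionO}) is in place, the vanishing gives surjectivity of (\ref{characterizationfirstmap}) and hence $K_{p,1}(X,B,L)=0$ for $L\gg 0$.
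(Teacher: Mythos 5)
Your skeleton is the same as the paper's: reduce via Proposition~\ref{criterion} to surjectivity of the sheaf map (\ref{characterizationfirstmap}), use $p$-very ampleness to get the surjection $H^0(X,B)\otimes\mathcal{O}_{X^{[p+1]}}\to B^{[p+1]}$, resolve its kernel $\mathcal{M}$ by a Buchsbaum--Rim complex \cite[Theorem B.2.2]{pag1} whose terms are $\wedge^{p+2+i}H^0(X,B)\otimes\operatorname{Sym}^i(B^{[p+1]})^{\vee}$ up to twist, and kill the pushforward obstructions using (\ref{conditionO}). But two steps of substance are missing. The first is the truncation. That resolution has length roughly $h^0(X,B)-p-2$, which is in general far larger than $p$; chasing through the short exact sequences, the vanishing of $R^1\mu_*(\mathcal{M}\otimes\mathcal{O}(-\delta_{p+1}))$ requires $R^{i+1}\mu_*$ of \emph{every} term of the resolution to vanish, whereas hypothesis (\ref{conditionO}) only covers $0\le i<p$. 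So your claim that the hypothesis ``kills each contribution to the degree-one piece'' fails for the terms with $i\ge p$. The paper disposes of those automatically by Brian\c{c}on's theorem \cite{briancon}: the fibers of $\mu\colon X^{[p+1]}\to X^{(p+1)}$ have dimension at most $p$, hence $R^{j}\mu_*=0$ for $j>p$ for any coherent sheaf, and this is exactly what terminates the descending induction at $i=p-1$. Without this (or some equivalent bound on the fibers of the Hilbert--Chow morphism) the argument does not close.

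The second gap is the identification of the terms. Since the resolution is built from the surjection onto $B^{[p+1]}$, its graded pieces involve $\operatorname{Sym}^i(B^{[p+1]})^{\vee}$, not the $\operatorname{Sym}^i(\mathcal{O}_X^{[p+1]})^{\vee}$ appearing in (\ref{conditionO}). You correctly single this out as the main obstacle, but the justification you offer is wrong: $p$-very ampleness only gives exactness of the Buchsbaum--Rim complex and local freeness of $\mathcal{M}$; it has no power to replace $B^{[p+1]}$ by $\mathcal{O}_X^{[p+1]}$ in the higher direct images. What is actually needed is a localization argument on the base, as in the paper: by \cite[Lemma 3.1]{scalasymm}, $X^{(p+1)}$ admits an open cover by sets $V^{(p+1)}$ with $V\subseteq X$ affine open and $B_{|V}$ trivial, over which $\mu$ restricts to $V^{[p+1]}\to V^{(p+1)}$ and $\operatorname{Sym}^i(B^{[p+1]})^{\vee}$ restricts to $\operatorname{Sym}^i((\mathcal{O}_V^{[p+1]})^{\vee})$; since $R^{i+1}\mu_*$ is computed locally over $X^{(p+1)}$, the vanishing you need for the $B$-terms is then equivalent to hypothesis (\ref{conditionO}). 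By contrast, the twist bookkeeping you worry about is harmless: tensoring the map of Proposition~\ref{criterion} by the line bundle $B^{(p+1)}$ and using (\ref{dettaut}) turns the $\mathcal{O}(-\delta_{p+1})$-twist into $\det B^{[p+1]}$, which cancels the determinant factor in the Buchsbaum--Rim terms and leaves precisely $\operatorname{Sym}^i(B^{[p+1]})^{\vee}$; equivalently, the two twists differ by a line bundle pulled back from $X^{(p+1)}$, which by the projection formula affects none of the vanishings.
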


\begin{proof}
	Using Proposition \ref{criterion}, we need to prove that the map of sheaves on $X^{(p+1)}$ 
	\begin{equation} 
	H^0(X,B)\otimes \mu_*\mathcal{O}(-\delta_{p+1}) \to \mu_*(B^{[p+1]}\otimes \mathcal{O}(-\delta_{p+1}))
	\end{equation}
	is surjective. This map is surjective if and only if it is surjective when tensored by the line bundle $B^{(p+1)}$. Using (\ref{dettaut}) and the projection formula, we can rewrite the tensored map as
	\begin{equation}\label{ausiliarymap}
	\mu_*(H^0(X,B)\otimes_{\mathbb{C}} \det B^{[p+1]}) \to \mu_*(B^{[p+1]}\otimes \det B^{[p+1]}).
	\end{equation}
	Set $h^0(X,B)=r+1$. Taking the Buchsbaum-Rim complex \cite[Theorem B.2.2]{pag1} associated to the surjective map $H^0(X,B)\otimes \mathcal{O}_{X^{^{[p+1]}}} \to B^{[p+1]}$ and tensoring by $\det B^{[p+1]}$ we get an exact complex of vector bundles
	\begin{equation}
	0 \to E_{r-p-1} \to \dots \to E_{1} \to E_{0} \to H^0(X,B) \otimes \det B^{[p+1]} \to B^{[p+1]}\otimes \det B^{[p+1]} \to 0
	\end{equation}
	with
	\begin{equation}
	E_{i} = \wedge^{p+2+i} H^0(X,B) \otimes_{\mathbb{C}} \operatorname{Sym}^{i}(B^{[p+1]})^{\vee}.
	\end{equation}
	Breaking this complex into short exact sequences, we see that if $R^{i+1}\mu_*(E_i)=0$ for all $0\leq i \leq r-p-1$, then the map (\ref{ausiliarymap}) is surjective. However, a result of Brian\c{c}on \cite{briancon} shows that the fibers of the Hilbert-Chow morphism have dimension at most $p$, hence it is enough to have $R^{i+1}\mu_*(E_i)=0$ for all $0\leq i < p$. This is the same as 
	\begin{equation}\label{conditionB}
	R^{i+1}\mu_*({\operatorname{Sym}^i B^{[p+1]}}^{\vee})=0 \text{  for all } 0\leq i <p.
	\end{equation} 
	Now, Scala shows in \cite[Lemma 3.1]{scalasymm} that we can find an open cover of $X^{p+1}$ composed of sets $V^{p+1}$, where $V\subseteq X$ is an open affine subset where $B$ is trivial. Then it follows by the construction of the symmetric product and the Hilbert scheme, that we have an open cover of $X^{(p+1)}$ of the form $V^{(p+1)}$ and that over these sets the Hilbert-Chow morphism restricts to $\mu_V\colon V^{[p+1]} \to V^{(p+1)}$. To conclude, it is straightforward to show that 
	\begin{align*}
	R^{i+1}\mu_*({\operatorname{Sym}^i B^{[p+1]}}^{\vee})_{|V^{(p+1)}} & \cong R^{i+1}{\mu_V}_*(\operatorname{Sym}^i ((B_{|V})^{[p+1]})^{\vee}) \\
	& \cong R^{i+1}{\mu_V}_*(\operatorname{Sym}^i ((\mathcal{O}_V^{[p+1]})^{\vee}) \cong R^{i+1}\mu_*({\operatorname{Sym}^i \mathcal{O}_X^{[p+1]}}^{\vee})_{|V^{(p+1)}}.
	\end{align*}
	In particular, condition (\ref{conditionB}) is equivalent to hypothesis (\ref{conditionO}).
\end{proof}

To conclude the proof of Theorem B we need to verify the cohomological vanishings of Proposition \ref{condition}. We first spell out some consequences of Grothendieck duality for the morphisms $\mu\colon X^{[n]}\to X^{(n)}$ and $\pi\colon X^n \to X^{(n)}$.

\begin{lemma}\label{duality}
  Let $X$ be a smooth projective surface. For all $F\in D^b(X^{[n]})$ and $G\in D^b_{\mathfrak{S}_n}(X^n)$ we have the isomorphisms in $D^b(X^{(n)})$:
\begin{align}
    R\mu_*(R\mathcal{H}om_{X^{[n]}}(F,\mathcal{O}_{X^{[n]}})) &\cong R\mathcal{H}om_{X^{(n)}}(R\mu_*(F),\mathcal{O}_{X^{(n)}}),\\
    \pi_*^{\mathfrak{S}_n}(R\mathcal{H}om_{X^n}(G,\mathcal{O}_{X^n})) &\cong R\mathcal{H}om_{X^{(n)}}(\pi_*^{\mathfrak{S}_n}(G),\mathcal{O}_{X^{(n)}}).
\end{align}
\end{lemma}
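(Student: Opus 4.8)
The plan is to realize both isomorphisms as instances of Grothendieck--Verdier duality, the only genuine content being the identification of the relative dualizing complexes $\mu^!\mathcal{O}_{X^{(n)}}$ and $\pi^!\mathcal{O}_{X^{(n)}}$ with the respective structure sheaves concentrated in degree zero. I would start from the general duality isomorphism, which for a proper morphism $f\colon Y\to Z$ reads $Rf_*\,R\mathcal{H}om_Y(F,f^!G)\cong R\mathcal{H}om_Z(Rf_*F,G)$, together with the standard formula $f^!G\cong Lf^*G\otimes^{L} f^!\mathcal{O}_Z$, valid whenever $G$ is a perfect complex. Since $X^{(n)}$ is Gorenstein, $\omega_{X^{(n)}}$ is an invertible, hence perfect, sheaf, so applying the second formula to $G=\omega_{X^{(n)}}$ reduces everything to computing $f^!\mathcal{O}_{X^{(n)}}$ from the behaviour of canonical bundles.

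For $\mu$ I would argue as follows. As $X^{[n]}$ is smooth and $X^{(n)}$ is Gorenstein, both of dimension $2n$, their dualizing complexes are $\omega_{X^{[n]}}[2n]$ and $\omega_{X^{(n)}}[2n]$, and duality gives $\mu^!\omega_{X^{(n)}}\cong\omega_{X^{[n]}}$. Combining this with $\mu^!\omega_{X^{(n)}}\cong\mu^*\omega_{X^{(n)}}\otimes\mu^!\mathcal{O}_{X^{(n)}}$ (here $L\mu^*=\mu^*$ on the line bundle $\omega_{X^{(n)}}$) and with the crepancy $\mu^*\omega_{X^{(n)}}\cong\omega_{X^{[n]}}$ recorded in Section 4 yields $\mu^!\mathcal{O}_{X^{(n)}}\cong\mathcal{O}_{X^{[n]}}$. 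Feeding this into the duality isomorphism with $G=\mathcal{O}_{X^{(n)}}$ produces the first formula.

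For $\pi$ the identical computation applies once I know $\pi^*\omega_{X^{(n)}}\cong\omega_{X^n}$, and this is where the surface hypothesis enters: the partial diagonals in $X^n$ have codimension $2$, so $\pi$ is étale in codimension one and carries no ramification divisor; hence $\pi^*\omega_{X^{(n)}}$ and $\omega_{X^n}$ agree as line bundles on the smooth variety $X^n$ away from a closed subset of codimension $\geq 2$, and therefore everywhere. As above this gives $\pi^!\mathcal{O}_{X^{(n)}}\cong\mathcal{O}_{X^n}$, and since $\pi$ is finite we have $R\pi_*=\pi_*$. It then remains to upgrade ordinary duality to the $\mathfrak{S}_n$-invariant statement: all objects and maps are $\mathfrak{S}_n$-equivariant with $\mathfrak{S}_n$ acting trivially on $X^{(n)}$, and since taking $\mathfrak{S}_n$-invariants is exact in characteristic zero, I would apply $(-)^{\mathfrak{S}_n}$ to the duality isomorphism for $\pi$. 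Decomposing $\pi_*G$ into isotypic components, the invariants of $R\mathcal{H}om_{X^{(n)}}(\pi_*G,\mathcal{O}_{X^{(n)}})$ single out precisely the trivial isotypic piece, namely $R\mathcal{H}om_{X^{(n)}}(\pi_*^{\mathfrak{S}_n}G,\mathcal{O}_{X^{(n)}})$, which gives the second formula.

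The main obstacle is the precise identification of the relative dualizing complexes: verifying that $f^!\mathcal{O}_{X^{(n)}}$ is genuinely $\mathcal{O}$ in degree zero rather than merely a shifted line bundle — this is exactly what the crepancy of $\mu$ and the codimension-$2$ ramification of $\pi$ are designed to guarantee — and keeping the $\mathfrak{S}_n$-linearizations compatible throughout the second computation so that taking invariants returns $\pi_*^{\mathfrak{S}_n}$ on both sides.
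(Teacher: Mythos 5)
Your proposal is correct. For the first isomorphism it is essentially the paper's own argument spelled out: ordinary Grothendieck duality for $\mu$, with $\mu^!\mathcal{O}_{X^{(n)}}\cong\mathcal{O}_{X^{[n]}}$ extracted from the Gorenstein property of both spaces and the crepancy $\mu^*\omega_{X^{(n)}}\cong\omega_{X^{[n]}}$. For the second isomorphism you take a genuinely different route: the paper disposes of it in one line by citing equivariant Grothendieck duality (Abuaf, Theorem 1.0.2), whereas you rederive that statement by hand from ordinary duality for the finite map $\pi$, the identification $\pi^!\mathcal{O}_{X^{(n)}}\cong\mathcal{O}_{X^n}$ (using that the ramification of $\pi$ lies over the partial diagonals, which have codimension $2$ exactly because $X$ is a surface), and exactness of $\mathfrak{S}_n$-invariants in characteristic zero together with the isotypic decomposition to identify $\bigl(R\mathcal{H}om_{X^{(n)}}(\pi_*G,\mathcal{O}_{X^{(n)}})\bigr)^{\mathfrak{S}_n}$ with $R\mathcal{H}om_{X^{(n)}}(\pi_*^{\mathfrak{S}_n}G,\mathcal{O}_{X^{(n)}})$. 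What each approach buys: the paper's citation is shorter and delegates all equivariant bookkeeping to a general theorem, while your argument is self-contained and makes visible where the surface hypothesis enters for $\pi$ --- a point invisible in the paper's proof. One subtlety you flag but should nail down explicitly: the isomorphism $\pi^!\mathcal{O}_{X^{(n)}}\cong\mathcal{O}_{X^n}$ must hold with the natural $\mathfrak{S}_n$-linearizations, since an equivariantly nontrivial identification (say, twisted by the sign character $\mathfrak{a}_n$, which genuinely occurs elsewhere in this paper) would change both sides upon taking invariants. This is settled by using the canonical pullback map $\pi^*\omega_{X^{(n)}}\to\omega_{X^n}$, defined over the smooth locus and extended by normality across a codimension-two set: it is equivariant by naturality and is an isomorphism off codimension two, hence everywhere, so $\pi^!\mathcal{O}_{X^{(n)}}\cong\omega_{X^n}\otimes\pi^*\omega_{X^{(n)}}^{-1}$ is equivariantly trivial and your invariants step goes through.
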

\begin{proof}
The first statement follows from the usual Grothendieck duality applied to the morphism $\mu\colon X^{[n]} \to X^{(n)}$, together with the fact that both $X^{[n]},X^{(n)}$ are Gorenstein and $\mu^*\omega_{X^{(n)}} \cong \omega_{X^{[n]}}$. The other one follows from equivariant Grothendieck duality, see for example \cite[Theorem 1.0.2]{abuaf}.
\end{proof}

Now we prove the vanishings:

\begin{lemma}\label{vanishings}
  Let $X$ be a smooth surface.  Then
  \begin{equation}
   R^{1}\mu_*(\mathcal{O}_{X^{[n]}}) = 0, \qquad   R^2\mu_*((\mathcal{O}_{X}^{[n]})^{\vee})=0, \qquad  R^3\mu_*((\mathcal{O}_{X}^{[n]}\otimes \mathcal{O}_{X}^{[n]})^{\vee}) = 0.
  \end{equation}
\end{lemma}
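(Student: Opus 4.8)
The plan is to prove the three vanishings in increasing order of difficulty, the first by general principles and the last two by a common mechanism that transports the question to $X^n$ via Grothendieck duality (Lemma \ref{duality}), the derived McKay correspondence (Theorems \ref{mckay}, \ref{scala}) and Krug's local $\mathcal{E}xt$-vanishing (Theorem \ref{krugextensions}). The first statement $R^1\mu_*\mathcal{O}_{X^{[n]}}=0$ I would dispose of immediately: since $X^{(n)}$ has rational singularities and $\mu$ is a crepant resolution, by definition $R^i\mu_*\mathcal{O}_{X^{[n]}}=0$ for all $i>0$, in particular for $i=1$.

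For the remaining two, write $T=(\mathcal{O}_X^{[n]})^{\otimes k}$ with $k=1,2$, so that the claims become $R^{k+1}\mu_*(T^{\vee})=0$. First I would use the first isomorphism of Lemma \ref{duality} to rewrite $R\mu_*(T^{\vee})=R\mu_*\,R\mathcal{H}om(T,\mathcal{O}_{X^{[n]}})\cong R\mathcal{H}om_{X^{(n)}}(R\mu_*T,\mathcal{O}_{X^{(n)}})$. Then I would compute $R\mu_*T$ through the correspondence: the pushforward identity used in the proof of Proposition \ref{criterion} (relating $R\mu_*(-\otimes\mathcal{O}(-\delta_n))$ to $\pi_*^{\mathfrak{S}_n}(\Phi(-)\otimes\mathfrak{a}_n)$), together with the determinant formula (\ref{dettaut}), identifies $R\mu_*T$ with $\pi_*^{\mathfrak{S}_n}$ of the Scala complex $\Phi(T)$, up to the twist by the alternating representation $\mathfrak{a}_n$ that is carried by $\mathcal{O}(-\delta_n)$. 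Commuting $R\mathcal{H}om$ past $\pi_*^{\mathfrak{S}_n}$ by the second isomorphism of Lemma \ref{duality} then yields an identification of the form $R\mu_*(T^{\vee})\cong\pi_*^{\mathfrak{S}_n}\bigl(R\mathcal{H}om_{X^n}(\Phi(T),\mathcal{O}_{X^n})\otimes\mathfrak{a}_n\bigr)$. The presence of the $\mathfrak{a}_n$-twist is the crucial point, and it is exactly what the $\mathcal{O}(-\delta_n)$'s above are forcing.

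Next I would resolve $\Phi(T)$ by Scala's complex $\mathsf{C}^{\bullet}$ (Theorem \ref{scala}; for $k=2$ one uses the analogous complex for $\mathcal{O}_X^{[n]}\otimes\mathcal{O}_X^{[n]}$, governed by Krug's description of $\mathcal{E}xt$ between tautological bundles in \cite{krug}). Feeding Krug's Theorem \ref{krugextensions}, namely $\mathcal{E}xt^{j}_{X^n}(\mathsf{C}^{i},\mathcal{O}_{X^n})=0$ for $j\neq 2i$, into the hyper-$\mathcal{E}xt$ spectral sequence $E_1^{p,q}=\mathcal{E}xt^{q}(\mathsf{C}^{-p},\mathcal{O})\Rightarrow\mathcal{H}^{p+q}R\mathcal{H}om(\mathsf{C}^{\bullet},\mathcal{O})$ forces degeneration, because the only nonzero entries lie on the line $q=-2p$, one per total degree. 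Taking the $(k+1)$-st cohomology sheaf and using exactness of $\pi_*^{\mathfrak{S}_n}$ I would arrive at
\[
R^{k+1}\mu_*(T^{\vee})\;\cong\;\pi_*^{\mathfrak{S}_n}\!\left(\mathcal{E}xt^{2(k+1)}_{X^n}(\mathsf{C}^{k+1},\mathcal{O}_{X^n})\otimes\mathfrak{a}_n\right),
\]
where the $\mathcal{E}xt$-sheaf is maximal Cohen--Macaulay of codimension $2(k+1)$, supported on the diagonal strata of $X^n$ where $k+1$ extra collisions occur.

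The vanishing then becomes a \emph{sign-isotypic} vanishing. Over the generic point of such a stratum the structure of the $\mathcal{E}xt$-sheaf is governed by the determinant of the normal bundle, i.e.\ by $\det N\cong\bigotimes\det(T_X)$ over the colliding factors; a transposition in the stabilizer acts on each rank-$2$ normal factor $T_X$ by $-1$, hence on its determinant by $(-1)^2=+1$. Thus the transpositions in the stabilizer act through the \emph{trivial} character, so after tensoring with $\mathfrak{a}_n$ (on which they act by $-1$) the stabilizer has no invariants, and $\pi_*^{\mathfrak{S}_n}$ of the twisted sheaf vanishes; this gives $R^{k+1}\mu_*(T^{\vee})=0$ for $k=1,2$. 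The main obstacle is the reduction step: pinning down the exact duality-and-twist behaviour of the McKay functor $\Phi$ (how it interchanges $R\mathcal{H}om$ with the $\mathcal{O}(\pm\delta_n)$/$\mathfrak{a}_n$-twist, which is where the relative dualizing data of the Bridgeland--King--Reid setup enters), controlling $\Phi(\mathcal{O}_X^{[n]}\otimes\mathcal{O}_X^{[n]})$ for $k=2$ via Krug's tensor-product results rather than the single-bundle Theorem \ref{krugextensions}, and carrying out the equivariant character computation uniformly over all codimension-$2(k+1)$ strata (disjoint pairs, triples, quadruples, and their combinations), the uniform input being that the determinant of each diagonal's normal bundle is fixed by the stabilizer's transpositions.
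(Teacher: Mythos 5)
Your first vanishing is fine and agrees with the paper (rational singularities of $X^{(n)}$). But the core of your argument for the other two rests on an identity that is false: you assert
\begin{equation*}
R\mu_*(T^{\vee})\;\cong\;\pi_*^{\mathfrak{S}_n}\bigl(R\mathcal{H}om_{X^n}(\Phi(T),\mathcal{O}_{X^n})\otimes\mathfrak{a}_n\bigr),
\end{equation*}
but no $\mathfrak{a}_n$-twist occurs here. The twist appearing in the proof of Proposition \ref{criterion} is produced by the line bundle $\mathcal{O}(-\delta_n)\cong\Psi(\mathcal{O}_{X^n}\otimes\mathfrak{a}_n)$, which enters that proof through $\det L^{[n]}$ and is nowhere present in $T=(\mathcal{O}_X^{[n]})^{\otimes k}$. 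Taking instead $G=\mathcal{O}_{X^n}$ with its trivial linearization in the same projection-formula statement $\mu_*(F\otimes\Psi(G))\cong\pi_*^{\mathfrak{S}_n}(\Phi(F)\otimes G)$ gives $R\mu_*T\cong\pi_*^{\mathfrak{S}_n}(\Phi(T))$, and then the two halves of Lemma \ref{duality} yield $R\mu_*(T^{\vee})\cong\pi_*^{\mathfrak{S}_n}\bigl(R\mathcal{H}om_{X^n}(\Phi(T),\mathcal{O}_{X^n})\bigr)$ with no sign --- this is exactly what the paper's proof confirms, since it obtains $R\mu_*((\mathcal{O}_X^{[n]})^{\vee})\cong\pi_*^{\mathfrak{S}_n}((\mathsf{C}^0_{\mathcal{O}_X})^{\vee})$ untwisted. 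The error is not cosmetic: your final step kills the invariants precisely because of the spurious $\mathfrak{a}_n$. By your own computation the stabilizer of a diagonal stratum acts trivially on the determinant of its normal bundle, so once the twist is removed your sign-isotypic argument produces no vanishing at all. What actually makes the higher contributions die --- and what the paper imports wholesale via Scala's theorem $R\mu_*\mathcal{O}_X^{[n]}\cong\pi_*^{\mathfrak{S}_n}(\mathsf{C}^0_{\mathcal{O}_X})$ --- is that the higher terms $\mathsf{C}^i$, $i\geq 1$, of Scala's complex themselves carry sign linearizations of the stabilizers along the partial diagonals; it is this intrinsic sign (which your stratum-by-stratum analysis ignores, treating the $\mathcal{E}xt$-sheaves as governed only by the normal bundle) that annihilates $\pi_*^{\mathfrak{S}_n}$, not an external tensoring by $\mathfrak{a}_n$.

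There is a second gap in the case $k=2$: Theorems \ref{scala} and \ref{krugextensions} concern the complex $\mathsf{C}^{\bullet}_E$ of a single tautological bundle, and $\Phi(\mathcal{O}_X^{[n]}\otimes\mathcal{O}_X^{[n]})$ is not its tensor square (it is Haiman's polygraph module $R(n,2)$, cf.\ the paper's concluding remarks); your appeal to unspecified ``tensor-product results'' of Krug does not supply the needed input. The paper avoids both problems by dualizing \emph{after} pushing forward: it uses Scala's concentration results ($R\mu_*\mathcal{O}_X^{[n]}\cong\pi_*^{\mathfrak{S}_n}(\mathsf{C}^0_{\mathcal{O}_X})$, and $R\mu_*(\mathcal{O}_X^{[n]}\otimes\mathcal{O}_X^{[n]})$ concentrated in degree zero) together with the exact sequence
\begin{equation*}
0\to\mu_*(\mathcal{O}_X^{[n]}\otimes\mathcal{O}_X^{[n]})\to\pi_*^{\mathfrak{S}_n}(\mathsf{C}^0_{\mathcal{O}}\otimes\mathsf{C}^0_{\mathcal{O}})\to\pi_*^{\mathfrak{S}_n}(\mathsf{C}^1_{\mathcal{O}}\otimes\mathsf{C}^0_{\mathcal{O}})\to 0,
\end{equation*}
after which Lemma \ref{duality}, local freeness of $\mathsf{C}^0_{\mathcal{O}}$, and the single vanishing $\mathcal{E}xt^4_{X^n}(\mathsf{C}^1_{\mathcal{O}},\mathcal{O}_{X^n})=0$ from Theorem \ref{krugextensions} finish the proof with no equivariant computation over strata. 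If you want to salvage your ``dualize first on $X^n$'' strategy, you must replace the $\mathfrak{a}_n$-twist by the correct stabilizer linearizations of the $\mathsf{C}^i$ --- at which point you are essentially re-proving Scala's theorem rather than bypassing it.
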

\begin{proof}
  For the first vanishing, we know that $R\mu_*\mathcal{O}_{X^{[n]}}=\mathcal{O}_{X^{(n)}}$ because $X^{(n)}$ has rational singularities and $\mu\colon X^{[n]} \to X^{(n)}$ is a resolution. For the second, we see from \cite[Theorem 3.2.1]{scala} that
  \begin{equation}
   R\mu_* \mathcal{O}_{X}^{[n]} \cong \pi_*^{\mathfrak{S}_n}(\mathsf{C}^0_{\mathcal{O}_X})
 \end{equation}
 and then Lemma \ref{duality} shows that
 \begin{equation}
R\mu_*((\mathcal{O}_{X}^{[n]})^{\vee}) \cong \pi_*^{\mathfrak{S}_n}((\mathsf{C}^0_{\mathcal{O}_X})^{\vee}).
\end{equation}
Since $\mathsf{C}^0_{\mathcal{O}_X}$ is locally free and $\pi_*^{\mathfrak{S}_n}$ an exact functor, it follows that $R\mu_*((\mathcal{O}_{X}^{[n]})^{\vee})$ is concentrated in degree zero, and in particular $R^2\mu_*((\mathcal{O}_{X}^{[n]})^{\vee})=0$.

For the third vanishing we observe that $R\mu_*(\mathcal{O}_X^{[n]}\otimes \mathcal{O}_X^{[n]})$ is concentrated in degree zero \cite[Corollary 3.3.1]{scala}. Hence, using the first part of Lemma \ref{duality}, we get that 
  \begin{equation}
    R^3\mu_*((\mathcal{O}_{X}^{[n]}\otimes \mathcal{O}_{X}^{[n]})^{\vee}) \cong \mathcal{E}xt^{3}_{X^{(n)}}(\mu_*(\mathcal{O}_{X}^{[n]} \otimes \mathcal{O}_X^{[n]}),\mathcal{O}_{X^{(n)}}).
  \end{equation}
  Now, Scala gives in \cite[Theorem 3.5.2]{scala} an exact sequence of sheaves on $X^{(n)}$:
  \begin{equation}
    0 \to \mu_*(\mathcal{O}_{X}^{[n]} \otimes \mathcal{O}_X^{[n]}) \to \pi_*^{\mathfrak{S}_n}(\mathsf{C}^0_{\mathcal{O}}\otimes \mathsf{C}^0_{\mathcal{O}}) \to \pi_*^{\mathfrak{S}_n}(\mathsf{C}^1_{\mathcal{O}}\otimes \mathsf{C}^0_{\mathcal{O}}) \to 0
  \end{equation}
  where the $\mathsf{C}^i_{\bullet}$ are the sheaves appearing in Theorem \ref{scala}.
  Therefore, it is enough to show
  \begin{equation}
   \mathcal{E}xt^{3}_{X^{(n)}}(\pi_*^{\mathfrak{S}_n}(\mathsf{C}^0_{\mathcal{O}}\otimes \mathsf{C}^0_{\mathcal{O}}),\mathcal{O}_{X^{(n)}}) = 0 \qquad \mathcal{E}xt^{4}_{X^{[n]}}(\pi_*^{\mathfrak{S}_n}(\mathsf{C}^1_{\mathcal{O}}\otimes \mathsf{C}^0_{\mathcal{O}}),\mathcal{O}_{X^{(n)}}) = 0.
 \end{equation}
 For the first one we see, using Lemma \ref{duality}, that
 \begin{equation}
\mathcal{E}xt^{3}_{X^{(n)}}(\pi_*^{\mathfrak{S}_n}(\mathsf{C}^0_{\mathcal{O}}\otimes \mathsf{C}^0_{\mathcal{O}}),\mathcal{O}_{X^{(n)}}) \cong \pi^{\mathfrak{S}_n}_*(\mathcal{E}xt_{X^n}(\mathsf{C}^0_{\mathcal{O}}\otimes \mathsf{C}^0_{\mathcal{O}},\mathcal{O}_{X^n})) = 0
\end{equation}
where the last vanishing follows from the fact that $\mathsf{C}^0_{\mathcal{O}}$ is locally free. For the second, we use again Lemma \ref{duality} and we get
\begin{equation}
\mathcal{E}xt^{4}_{X^{[n]}}(\pi_*^{\mathfrak{S}_n}(\mathsf{C}^1_{\mathcal{O}}\otimes \mathsf{C}^0_{\mathcal{O}}),\mathcal{O}_{X^{(n)}}) \cong \pi_*^{\mathfrak{S}_n}\mathcal{E}xt^4_{X^n}(\mathsf{C}^1_{\mathcal{O}}\otimes \mathsf{C}^0_{\mathcal{O}},\mathcal{O}_{X^n}) \cong \pi_*^{\mathfrak{S}_n}\left( {\mathsf{C}^0_{\mathcal{O}}}^{\vee}\otimes \mathcal{E}xt^4_{X^n}(\mathsf{C}^1_{\mathcal{O}},\mathcal{O}_{X^n})\right),
\end{equation}
where in the last step we have used again the fact that $\mathsf{C}^0_{\mathcal{O}}$ is locally free. To conclude we observe that  
\begin{equation}
\mathcal{E}xt^4_{X^n}(\mathsf{C}^1_{\mathcal{O}},\mathcal{O}_{X^n})=0
\end{equation}
by  Theorem \ref{krugextensions}.
\end{proof}

Now it is straightforward to prove Theorem B:

\begin{proof}[Proof of Theorem B]
Let $X$ be a smooth projective surface and $B$ a line bundle on $X$. Fix also an integer $0\leq p\leq 3$. If $K_{p,1}(X,B,L)=0$ for $L\gg 0$, Theorem A gives that $B$ is $p$-very ample. We prove the converse through Proposition \ref{condition}. We need to check the vanishings in (\ref{conditionO}):the cases $p=0,1,2$ follow immediately from Lemma \ref{vanishings}. For the case $p=3$, we use again Lemma \ref{vanishings}, together with the observation that $\operatorname{Sym}^2((\mathcal{O}_X^{[n]})^{\vee})$ is a direct summand of $(\mathcal{O}_{X}^{[n]}\otimes \mathcal{O}_{X}^{[n]})^{\vee}$. 
\end{proof}

\newpage
\subsection{Concluding remarks}
\begin{itemize}
\item It is possible that the statement of Theorem B remains true for any $p$, but the key part in our proof was to check the vanishings
  \begin{equation}\label{conjecturalvanishing}
R^{k+1}\mu_*\left( \mathcal{H}om_{X^{[n]}}\left(\left(\mathcal{O}_{X}^{[n]}\right)^{\otimes k},\mathcal{O}_{X^{[n]}}\right) \right) = 0 
\end{equation}
and in our case we were able to do so because Scala \cite{scala} gives a relatively simple description of the sheaves $R\mu_*((\mathcal{O}_{X}^{[n]})^{\otimes k})$, when $k=0,1,2$ is small. However, as $k$ increases, these sheaves become increasingly more complicated, and it is not clear whether it is possible to check the vanishings explicitly as we have done in Lemma \ref{vanishings}.

Here we would like to discuss informally another point of view on the problem, and argue that the above statement is essentially combinatorial.
We first observe that in the proof  of Lemma \ref{vanishings} we did not use anything about the particular geometry of $X$. Indeed, reasoning as in \cite[p. 8]{scala}, we can look at the vanishings (\ref{conjecturalvanishing}) as being basically local statements on $X$, so that we can restrict to the case of $X=\mathbb{A}^2_{\mathbb{C}}$.
Moreover, using Lemma \ref{duality} and Proposition \cite[1.7.2 (a)]{scala}, we see that
\begin{equation}
R^{k+1}\mu_*\left( \mathcal{H}om_{X^{[n]}}\left(\left(\mathcal{O}_{X}^{[n]}\right)^{\otimes k},\mathcal{O}_{X^{[n]}}\right) \right) \cong \pi_*^{\mathfrak{S}_n} \left(\mathcal{E}xt^{k+1}_{X^n}\left( \Phi\left(\left(\mathcal{O}_{X}^{[n]}\right)^{\otimes k}\right),\mathcal{O}_{X^n} \right) \right) 
\end{equation}
In the case of $X=\mathbb{A}^2_{\mathbb{C}}$, Haiman gave an explicit description of $\Phi((\mathcal{O}_{X}^{[n]})^{\otimes k})$ for any  $k\geq 0$. To state his result, let $S=\mathbb{C}[X_1,Y_1,\dots,X_n,Y_n]$ be the ring of $X^n$, with the natural action of $\mathfrak{S}_n$, and let $S[A_1,B_1,\dots,A_k,B_k]$ be the ring of $X^n\times X^k$.  For every function $f\colon \{1,\dots, k \} \to \{1, \dots, n \}$, define a linear subspace $W_f\subseteq X^n \times X^k$ by
\begin{equation}
W_f = \operatorname{Spec} S[A_1,B_1,\dots,A_k,B_k]/I_f, \qquad I_f = \left( A_i-X_{f(i)}, B_i - Y_{f(i)} \,\mid \, i=1,\dots,k  \right)
\end{equation}
and set $Z(n,k)\subseteq X^n\times X^k$ to be the union of these subspaces. The scheme $Z(n,k)$ is called the \textit{Haiman polygraph} and its coordinate ring is by definition
\begin{equation}
Z(n,k) = \operatorname{Spec} R(n,k), \qquad R(n,k) =  S[A_1,B_1,\dots,A_k,B_k] / \bigcap_{f} I_f.
\end{equation}
In \cite[Theorem 2.1, Proposition 5.3]{haiman_2} Haiman proved the following:
\begin{equation}
\Phi\left(\left(\mathcal{O}_{X}^{[n]}\right)^{\otimes k}\right) \cong R(n,k)
\end{equation}
where we look at $R(n,k)$ as a $\mathfrak{S}_n$-linearized  coherent sheaf on $X^n$. In particular, the vanishings in (\ref{conjecturalvanishing}) correspond to
\begin{equation}
\operatorname{Ext}^{k+1}_{S}\left( R(n,k), S \right)^{\mathfrak{S}_n} = 0
\end{equation}
so that we can regard Theorem B as a consequence of this essentially combinatorial statement about the ring $\mathbb{C}[X,Y]$.
Moreover, this statement is completely explicit and in principle it can be verified by a computer. We wrote a program to check these vanishings, but the problem becomes computationally very expensive as $n$ and $k$ grow, and we were not able to obtain better results than those already proved before.  
\vspace{5pt} 

\item A topic that we do not discuss at all is how to make the statement of Theorem B effective. Indeed, for a curve $C$,  Ein and Lazarsfeld give in in \cite[Proposition 2.1]{ein_lazarsfeld} a lower bound on the degree of a line bundle $L$ such that if $B$ is $p$-very ample then $K_{p,1}(C,B,L)=0$. The bound has later been improved by Rathmann \cite{rathmann} for any curve and by Farkas and Kemeny for a general curve and $B=\omega_C$ \cite{farkas_kemeny}. It is then natural to ask for a similar result for surfaces.  
\vspace{5pt}

\item Instead, it is not clear whether one should expect that Theorem B extends to varieties of dimension greater than two. Indeed, the Hilbert scheme of points and its relation with Koszul cohomology  become more complicated in higher dimensions, as observed by Ein, Lazarsfeld and Yang in \cite[Footnote 9]{ein_lazarsfeld_survey}. 
\end{itemize}
\begin{bibdiv}
\begin{biblist}[\normalsize]

\bib{abuaf}{article}{
  author={Abuaf, Roland},
  title={Categorical crepant resolutions for quotient singularities},
  journal={Math. Z.},
  volume={282},
  date={2016},
  number={3},
  pages={679--689},
}

\bib{agostini_kuronya_lozovanu}{webpage}{
 author={Agostini, Daniele},
 author={K\"uronya, Alex},
 author={Lozovanu, Victor},
 title={Higher syzygies on surfaces with numerically trivial canonical bundle},
 url={ arXiv:1703.10203},
 date={2017}
}

\bib{AproduVanishing2002}{article}{
	author    = {Aprodu, Marian},
	title     = {On the vanishing of higher syzygies of curves},
	journal   = {Math. Z.},
	year      = {2002},
	volume    = {241},
	pages     = {1--15},
}

\bib{AproduGonalityOdd2004}{article}{
	author    = {Aprodu, Marian },
	title     = {Green-Lazarsfeld gonality conjecture for generic curves of odd genus},
	journal   = {Internat. Math. Res. Notices},
	year      = {2004},
	volume    = {2004},
	number    = {63},
	pages     = {3409},
}

\bib{aprodu_nagel}{book}{
   author={Aprodu, Marian},
   author={Nagel, Jan}
   title={Koszul Cohomology and Algebraic Geometry},
   note={University Lecture Series, no. 52},
   publisher={American Mathematical Society, Providence R.I.},
   date={2010},
}

\bib{AproduVoisinGonalityLarge2003}{article}{
	author    = {Aprodu, Marian},
	author = {Voisin, Claire}
	title     = {Green-Lazarsfeld's conjecture for generic curves of large gonality},
	journal   = {Comptes Rendus Mathematique},
	year      = {2003},
	volume    = {336},
	number    = {4},
	pages     = {335--339},
}

\bib{irrationality}{article}{
   author={Bastianelli, Francesco},
   author={De Poi, Pietro},
   author={Ein, Lawrence},
   author={Lazarsfeld, Robert},
   author={Ullery, Brooke},
   title={Measures of irrationality for hypersurfaces of large degree},
   journal={Compos. Math.},
   volume={153},
   date={2017},
   pages={2368--2393},
}

\bib{bauer_dirocco_szemberg}{article}{
   author={Bauer, Thomas},
   author={Di Rocco, Sandra},
   author={Szemberg, Tomasz},
   title={Generations of jets on $K3$ surfaces},
   journal={J. Pure. Appl. Algebr.},
   volume={146},
   date={2000},
   number={1},
   pages={17--27},
}

\bib{beltrametti_francia_sommese}{article}{
   author={Beltrametti, Mauro},
   author={Francia, Paolo},
   author={Sommese, Andrew J.},
   title={On Reider’s method and higher order embeddings},
   journal={Duke Math. J.},
   volume={58},
   date={1989},
   number={2},
   pages={425--439},
}

\bib{beltrametti_sommese}{article}{
   author={Beltrametti, Mauro C.},
   author={Sommese, Andrew J.},
   title={On $k$-jet ampleness},
   conference={
      title={Complex analysis and geometry},
   },
   book={
      series={Univ. Ser. Math.},
      publisher={Plenum, New York},
   },
   date={1993},
   pages={355--376},
}

\bib{briancon}{article}{
   author={Brian\c{c}on, Jo\"el},
   title={Description de $Hilb^n\mathbb{C}\{x,y\}$},
   journal={Invent. Math.},
   volume={41},
   date={1977},
   pages={45--89},
}

\bib{BKR}{article}{
   author={Bridgeland, Tom},
   author={King, Alastair},
   author={Reid, Miles},
   title={The McKay correspondence as an equivalence of derived categories},
   journal={J. Amer. Math. Soc.},
   volume={14},
   date={2001},
   number={3},
   pages={535--554},
}

\bib{ein_erman_lazarsfeld}{article}{
   author={Ein, Lawrence},
   author={Erman, Daniel},
   author={Lazarsfeld, Robert},
   title={A quick proof of nonvanishing for asymptotic syzygies},
   journal={Algebr. Geom.},
   volume={3},
   date={2016},
   number={2},
   pages={211--222},
}

\bib{ein_lazarsfeld_effective_vanishing}{article}{
   author={Ein, Lawrence},
   author={Lazarsfeld, Robert},
   title={Syzygies and Koszul cohomology of smooth projective varieties of arbitrary dimension},
   journal={Invent. math.},
   volume={111},
   date={1993},
   pages={51--67},
}

\bib{ein_lazarsfeld}{article}{
   author={Ein, Lawrence},
   author={Lazarsfeld, Robert},
   title={The gonality conjecture on syzygies of algebraic curves of large
   degree},
   journal={Publ. Math. Inst. Hautes \'Etudes Sci.},
   volume={122},
   date={2015},
   pages={301--313},
}

\bib{ein_lazarsfeld_survey}{webpage}{
 author={Ein, Lawrence},
 author={Lazarsfeld, Robert},
 title={ Syzygies of projective varieties of large degree: recent progress and open problems},
 url={arXiv:1605.07477},
 date={2016}
}

\bib{ein_lazarsfeld_yang}{article}{
   author={Ein, Lawrence},
   author={Lazarsfeld, Robert},
   author={Yang, David},
   title={A vanishing theorem for weight-one syzygies},
   journal={Algebra Number Theory},
   volume={10},
   date={2016},
   number={9},
   pages={1965--1981},
}

\bib{eisenbud}{book}{
   author={Eisenbud, David},
   title={The geometry of syzygies},
   series={Graduate Texts in Mathematics},
   volume={229},
   publisher={Springer-Verlag, New York},
   date={2005},
}

\bib{farkas_kemeny}{webpage}{
 author={Farkas, Gavril},
 author={Kemeny, Michael},
 title={Linear syzygies on curves with prescribed gonality},
 url={arXiv:1610.04424},
 date={2016}
}

\bib{fogarty}{article}{
   author={Fogarty, John},
   title={Algebraic families on an algebraic surface},
   journal={Amer. J. Math},
   volume={90},
   date={1968},
   pages={511--521},
}

\bib{fogarty_2}{article}{
   author={Fogarty, John},
   title={Algebraic families on an algebraic surface, II},
   journal={Amer. J. Math},
   volume={95},
   date={1973},
   pages={660--687},
}

\bib{goettsche}{book}{  
  Author = {Lothar {G\"ottsche}},
    Title = {Hilbert schemes of zero-dimensional subschemes of smooth varieties},   
    Pages = {viii + 196},
    Year = {1994},
    Publisher = {Berlin: Springer-Verlag},
}    

\bib{green}{article}{
   author={Green, Mark L.},
   title={Koszul cohomology and the geometry of projective varieties},
   journal={J. Differential Geom.},
   volume={19},
   date={1984},
   number={1},
   pages={125--171},
}

\bib{green_lazarsfeld}{article}{
   author={Green, Mark},
   author={Lazarsfeld, Robert},
   title={On the projective normality of complete linear series on an
   algebraic curve},
   journal={Invent. Math.},
   volume={83},
   date={1986},
   number={1},
   pages={73--90},
}

\bib{haiman_1}{article}{
   author={Haiman, Mark},
   title={ Hilbert schemes, polygraphs, and the Macdonald positivity conjecture},
   journal={J. Amer. Math. Soc.},
   volume={14},
   date={2001},
   pages={941--1006 },
}

\bib{haiman_2}{article}{
   author={Haiman, Mark},
   title={Vanishing theorems and character formulas for the Hilbert scheme
   of points in the plane},
   journal={Invent. Math.},
   volume={149},
   date={2002},
   number={2},
   pages={371--407},
}

\bib{KawaguchiGonalityToric2008}{article}{
	author    = {Kawaguchi, Ryo},
	title     = {The gonality conjecture for curves on certain toric surfaces},
	journal   = {Osaka J. Math.},
	year      = {2008},
	volume    = {45},
	number    = {1},
}

\bib{krug}{article}{
   author={Krug, Andreas},
   title={Extension groups of tautological sheaves on Hilbert schemes},
   journal={J. Algebraic Geom.},
   volume={23},
   date={2014},
   number={3},
   pages={571--598},
}

\bib{krug_mckay}{webpage}{
 author={Krug, Andreas},
 title={ Remarks on the derived McKay correspondence for Hilbert schemes of points and tautological bundles}
 url={ arXiv:1612.04348},
 date={2017}
}

\bib{pag1}{book}{
   author = {Lazarsfeld, Robert}
   title = {Positivity in Algebraic Geometry I}
   publisher = {Springer-Verlag Berlin Heidelberg}
   year = {2004}
   }

\bib{rathmann}{webpage}{
 author={Rathmann,J\"urgen},
 title={ An effective bound for the gonality conjecture},
 url={arXiv:1604.06072},
 date={2016}
}

%\bib{roe}{webpage}{    
%    title={Smooth curves in a linear system satisfying certain conditions},    
%    author={Ro\'e, Joaquim},    
%    url={https://mathoverflow.net/q/120755 (version: 2013-02-04)},       
%    note={MathOverflow}, 
%}  

\bib{scala}{article}{
   author={Scala, Luca},
   title={Cohomology of the Hilbert scheme of points on a surface with
   values in representations of tautological bundles},
   journal={Duke Math. J.},
   volume={150},
   date={2009},
   number={2},
   pages={211--267},
 }

\bib{scalasymm}{webpage}{
   author={Scala, Luca},
   title={ Higher symmetric powers of tautological bundles on Hilbert schemes of points on a surface},
   url={arxiv:1502.07595},
   year={2015}
 }

\bib{voisin_even}{article}{
   author={Voisin, Claire},
   title={Green's generic syzygy conjecture for curves of even genus lying on a $K3$ surface},
   journal={J. Eur. Math. Soc.},
   volume={4},
   date={2002},
   pages={363--404},
}

\bib{yang}{article}{
   author={Yang, David H.},
   title={$S_n$-equivariant sheaves and Koszul cohomology},
   journal={Res. Math. Sci.},
   volume={1},
   date={2014},
   pages={Art. 10, 6},
}

\end{biblist}
\end{bibdiv}

\end{document}